\renewcommand\emptyset\varnothing
\newcommand{\up}{\uparrow}
\newcommand{\down}{\downarrow}
\newcommand{\R}{\mathbb{R}}
\newcommand{\N}{\mathbb{N}}
\newcommand{\C}{\mathbb{C}}
\newcommand{\Q}{\mathbb{Q}}
\newcommand{\Z}{\mathbb{Z}}
\newcommand{\F}{\mathbb{F}}
\newcommand{\cA}{{\mathcal{A}}}
\newcommand{\cC}{{\mathcal{C}}}
\newcommand{\cD}{{\mathcal{D}}}
\newcommand{\cF}{{\mathcal{F}}}
\newcommand{\cG}{{\mathcal{G}}}
\newcommand{\cH}{{\mathcal{H}}}
\newcommand{\cK}{{\mathcal{K}}}
\newcommand{\cM}{{\mathcal{M}}}
\newcommand{\cN}{{\mathcal{N}}}
\newcommand{\cP}{{\mathcal{P}}}
\newcommand\fh{\mathfrak{h}}
\newcommand\fC{\mathfrak{C}}
\newcommand\id{\mathrm{id}}
\newcommand\md{\textup{-mod}}
\newcommand\Md{\textup{-Mod}}
\newcommand\pmd{\textup{-pmod}}
\newcommand\spl{\textup{split}}
\newcommand\pj{\textup{proj}}
\newcommand\Sy{\textup{Sym}}
\newcommand\Vect{\textup{Vect}}
\newcommand\Set{\textup{Set}}
\newcommand\bimod{\textup{bimod}}
\DeclareMathOperator{\im}{im}
\DeclareMathOperator{\End}{End}
\DeclareMathOperator{\Span}{Span}
\DeclareMathOperator{\Hom}{Hom}
\DeclareMathOperator{\Ind}{Ind}
\DeclareMathOperator{\Res}{Res}
\DeclareMathOperator{\Mor}{Mor}
\DeclareMathOperator{\Ob}{Ob}
\DeclareMathOperator{\Fun}{Fun}
\newcommand\fa{\text{for all }}   
  \newcommand{\solution}[1]{\textbf{Solution:} #1}
  \newcommand{\solution}[1]{}
\newtheoremstyle{custom-rem}
  {.7\baselineskip\@plus.2\baselineskip\@minus.2\baselineskip}
  {.7\baselineskip\@plus.2\baselineskip\@minus.2\baselineskip}
  {\upshape}
  {}
  {\itshape}
  {.}
  { }
  {}
\newtheoremstyle{custom-def}
  {.7\baselineskip\@plus.2\baselineskip\@minus.2\baselineskip}
  {.7\baselineskip\@plus.2\baselineskip\@minus.2\baselineskip}
  {\upshape}
  {}
  {\bfseries}
  {.}
  { }
  {}
\newtheoremstyle{custom-theo}
  {.7\baselineskip\@plus.2\baselineskip\@minus.2\baselineskip}
  {.7\baselineskip\@plus.2\baselineskip\@minus.2\baselineskip}
  {\itshape}
  {}
  {\bfseries}
  {.}
  { }
  {}
\theoremstyle{custom-theo}
\newtheorem{theorem}{Theorem}[section]
\newtheorem{lem}[theorem]{Lemma}
\newtheorem{prop}[theorem]{Proposition}
\newtheorem{cor}[theorem]{Corollary}
\theoremstyle{custom-rem}
\newtheorem{eg}[theorem]{Example}
\newtheorem{egs}[theorem]{Examples}
\newtheorem{rem}[theorem]{Remark}
\newtheorem{prob}{}[section]
\theoremstyle{custom-def}
\newtheorem{defin}[theorem]{Definition}
\newcommand{\define}[1]{\emph{#1}\index{#1}}
\newcommand{\Exercises}{\par \vspace{4ex plus 2ex minus 0ex}\hfil\rule{300pt}{0.5pt}\hfil\par
  \vspace{3ex plus 2ex minus 0ex}%
  \noindent\textbf{\Large{Exercises.}}\medskip}
\newcommand{\comments}[1]{ \begin{center} \parbox{5 in}{{\bf {\footnotesize Comments:  }}{\footnotesize \textit{#1}}} \end{center}}
\begin{document}
%


\thispagestyle{empty}

{\centering
\vspace*{0.08\textheight}
{\Huge\bfseries Introduction to Categorification}\\[\baselineskip]
{\scshape Winter School I}\\[\baselineskip]
{\scshape January 6--17, 2014}\\[\baselineskip]
{\scshape \href{http://www.crm.math.ca/LieTheory2014/}{New Directions in Lie Theory Thematic Semester}}\\[\baselineskip]
{\scshape \href{http://www.crm.umontreal.ca/en/}{Centre de Recherches Math\'ematiques}}\par
\vfill
\begin{tikzpicture}[>=stealth,baseline=25pt,scale=1.5]
  \draw (0,0) .. controls (1,1) .. (0,2)[<-];
  \draw (1,0) .. controls (0,1) .. (1,2)[->] ;
  \draw (1.5,1) node {=};
  \draw (2.5,0) --(2.5,2)[<-];
  \draw (3.5,0) -- (3.5,2)[->];
  \draw (4,1) node {$-$};
  \draw (4.5,1.75) arc (180:360:.5) ;
  \draw (4.5,2) -- (4.5,1.75) ;
  \draw (5.5,2) -- (5.5,1.75) [<-];
  \draw (5.5,.25) arc (0:180:.5) ;
  \draw (5.5,0) -- (5.5,.25) ;
  \draw (4.5,0) -- (4.5,.25) [<-];
\end{tikzpicture}
\vfill
{\large\scshape \href{http://mysite.science.uottawa.ca/asavag2/index.html}{Alistair Savage}}\\[\baselineskip]
{\scshape Department of Mathematics and Statistics}\\[\baselineskip]
{\scshape University of Ottawa}\par
\vfill
This work is licensed under a \\ \href{http://creativecommons.org/licenses/by-sa/4.0/deed.en_US"}{Creative Commons Attribution-ShareAlike 4.0 International License}\par
\vspace*{0.08\textheight}}

\newpage

\pagenumbering{roman}

\pagestyle{headings}

\tableofcontents



\chapter*{Preface}
\addcontentsline{toc}{chapter}{Preface}
\pagestyle{plain}

These are the notes for a two-week mini-course \emph{Introduction to Categorification} given at a winter school in January 2014 as part of the thematic semester \emph{New Directions in Lie Theory} at the Centre de Recherches Math\'ematiques in Montr\'eal.  Each week, the course met four times for one-and-a-half hours.  There was also a problem session each week where students presented solutions to the exercises in the notes.

The goal of the course was to give an overview of the idea of categorification, with an emphasis on some examples where explicit computation is possible with minimal background.  Students were expected to have a solid understanding of groups, rings, and modules covered, for instance, in typical first year graduate courses in algebra.  A basic knowledge of the fundamentals of category theory was also assumed.

The course began with a very brief review of the representation theory of associative algebras, before introducing the concept of weak categorification with some simple examples.  It then proceeded to a discussion of more sophisticated examples of categorification, including a weak categorification of the polynomial representation of the Weyl group and the Fock space representation of the Heisenberg algebra.  The course concluded with a discussion of strong categorification and a brief overview of some further directions in the field.

Thank you to all of the students of the mini-course for their enthusiasm and interesting questions and comments.  Special thanks go to Yvan Saint-Aubin and Franco Saliola for useful remarks and for pointing out various typographical errors in earlier versions of these notes.

\bigskip
\noindent \href{http://mysite.science.uottawa.ca/asavag2/index.html}{Alistair Savage} \hfill Ottawa, 2014.

\bigskip \bigskip \bigskip \bigskip

\noindent \emph{Course website:} \url{http://mysite.science.uottawa.ca/asavag2/categorification/}

\pagestyle{headings}

%
\chapter{A brief review of modules over associative algebras}
\chaptermark{Associative algebras}
\pagestyle{headings}
\pagenumbering{arabic}
%

While categorification can take place in a very general setting, we will be concerned in this course almost exclusively with categories of modules over rings or algebras.  Thus, in order to speed up the exposition and get more quickly to some interesting results in categorification, we will work with such categories from the start.  In this chapter, we will review some of the key properties of these categories that will be used in the sequel.

%
\section{Associative algebras and their modules} \label{sec:assoc-alg}
%

Throughout this chapter, we fix an arbitrary field $\F$.  We first recall the definition of an associative algebra.

\begin{defin}[Associative algebra]
  Suppose $R$ is a commutative ring.  An \emph{associative $R$-algebra}\index{associative algebra}\index{algebra!associative} is a ring $B$ that is also an $R$-module and such that the ring multiplication is $R$-bilinear:
  \[
    \alpha (ab) = (\alpha a)b = a(\alpha b),\quad \fa \alpha \in R,\ a,b \in B.
  \]
  We say that $B$ is \emph{unital}\index{unital algebra}\index{algebra!unital} if it contains an element $1$ (denoted $1_B$ when there is chance of confusion) such that
  \[
    1 b = b = b 1,\quad \fa b \in B.
  \]
  If $B$ is commtutative (as a ring), then we say that it is a \emph{commutative $R$-algebra}\index{commutative algebra}\index{algebra!commutative}.
\end{defin}

\begin{eg}
  A $\Z$-algebra is the same as a ring.
\end{eg}

In these notes, we will be most interested in the case of algebras over a field (i.e.\ we will take $R=\F$).

\begin{eg}[Group algebra]
  If $\Gamma$ is a group, then we can define the \define{group algebra} $\F [\Gamma]$.  As an $\F$-module, $\F [\Gamma]$ is the $\F$-vector space with basis $\Gamma$.  Multiplication is given by
  \[
    (\alpha_1 \gamma_1)(\alpha_2 \gamma_2) = (\alpha_1 \alpha_2) (\gamma_1 \gamma_2),\quad \fa \alpha_1, \alpha_2 \in \F,\ \gamma_1, \gamma_2 \in \Gamma,
  \]
  and extending by linearity.
\end{eg}

\begin{eg}[Endomorphism algebra]
  If $V$ is an $\F$-vector space, then $\End V$ is an algebra under the natural operations (e.g.\ multiplication is given by composition of endomorphisms).  It is called the \define{endomorphism algebra} of $V$.
\end{eg}

\begin{eg}[Algebra of dual numbers]
  The algebra $D = \F[x]/(x^2)$ is called the \define{algebra of dual numbers}\index{dual numbers}.  It is a two-dimensional algebra.
\end{eg}

\begin{defin}[Algebra homomorphism]
  An \define{algebra homomorphism} between two associative $R$-algebras is an $R$-linear ring homomorphism.  For a homomorphism $\psi$ of \emph{unital} associative $R$-algebras, we also require that $\psi(1)=1$.
\end{defin}

For the remainder of this section, we fix a unital associative $\F$-algebra $B$.

\begin{defin}[Algebra representation]
  A \define{representation} of $B$ is a unital algebra homomorphism $B \to \End V$ for some $\F$-module $V$.
\end{defin}

\begin{defin}[Module, simple module] \index{module}
  A left (resp.\ right) $B$-module is simply a left (resp.\ right) $B$-module for the underlying ring of $B$.  It follows that a left $B$-module $M$ is also an $\F$-module, with action
  \[
    \alpha m = (\alpha 1_B)m,\quad \fa \alpha \in \F,\ m \in M
  \]
  (and similarly for right $B$-modules).  A module is \emph{simple}\index{simple module}\index{module!simple} if it has no nonzero proper submodules.
\end{defin}

Suppose $M$ is a simple $B$-module and let $m$ be an arbitrary nonzero element of $M$.  Then $Bm = \{bm\ |\ b \in B\}$ is a nonzero submodule of $M$.  Since $M$ is simple, we must have $Bm=B$.  Thus, $M$ is generated by any nonzero element.  Now consider the homomorphism of $B$-modules
\[
  f \colon B \to M,\ b \mapsto bm.
\]
Then $f$ is surjective and so, by the First Isomorphism Theorem, we have $M \cong B/\ker f$ as $B$-modules. Since $M$ is simple, $\ker f$ is a maximal ideal of $B$.  In this way, we see that all simple $B$-modules are isomorphic to quotients of $B$ by maximal ideals.

\begin{egs} \label{egs:simple-module}
  \begin{enumerate}[(a)]
    \item The only simple $\F$-module (up to isomorphism) is the one-dimensional vector space $\F$.
    \item \label{eg-item:simple-module-dual-numbers} The only maximal ideal of the algebra $D$ of dual numbers\index{algebra of dual numbers}\index{dual numbers} is $(x)$ (see Exercise~\ref{prob:max-ideal-dual-numbers}).  Thus, the only simple $D$-module (up to isomorphism) is $D/(x)$.  This is a one-dimensional module on which $x$ acts by zero (and $1_D$ acts by the identity).
  \end{enumerate}
\end{egs}

The notions of representations of $B$ and left $B$-modules are equivalent.  Thus, by abuse of terminology, we will sometimes use the terms interchangeably.  Furthermore, when we write `$B$-module', without specifying `left' or `right', we shall mean `left $B$-module'.  Recall that a left $B$-module is \emph{finitely generated}\index{finitely generated module}\index{module!finitely generated} if it has a finite generating set.

A \define{short exact sequence} of modules is a sequence of $B$-module homomorphisms
\[
  0 \xrightarrow{\psi_0} M_1 \xrightarrow{\psi_1} M_2 \xrightarrow{\psi_2} M_3 \xrightarrow{\psi_3} 0,
\]
such that $\im \psi_i = \ker \psi_{i+1}$ for $i=0,1,2$.  We say that the sequence \emph{splits} (and it is a \define{split exact sequence}) if either one of the following two equivalent conditions is satisfied:
\begin{enumerate}[(a)]
  \item there exists a $B$-module homomorphism $\varphi_1 \colon M_2 \to M_1$ such that $\varphi_1 \psi_1 = \id_{M_1}$;
  \item there exists a $B$-module homomorphism $\varphi_2 \colon M_3 \to M_2$ such that $\psi_2 \varphi_2 = \id_{M_3}$.
\end{enumerate}
In this case we have that $M_2 \cong M_1 \oplus M_3$.

\begin{defin}[Projective module] \label{def:projective}
  A $B$-module $P$ is \emph{projective}\index{projective module}\index{module!projective} if every short exact sequence of the form
  \[
    0 \to M \to N \to P \to 0
  \]
  (where $M$ and $N$ are $B$-modules) is a split exact sequence.
\end{defin}

A $B$-module $P$ is projective if and only if it is a direct summand of a free module (in particular, free modules are projective).  Another characterization of projective modules is as follows.  Remember that the functor $\Hom (M, -)$ from the category of $B$-modules to the category of abelian groups is always left exact (for any $B$-module $M$).  It is also right exact (hence exact) if and only if $M$ is projective.

\begin{defin}[Superfluous submodule, superfluous epimorphism]
  A submodule $N$ of a $B$-module $M$ is \emph{superfluous}\index{superfluous submodule}\index{submodule!superfluous} if, for any other submodule $H$ of $M$, the equality $N + H = M$ implies $H=M$.  A \define{superfluous epimorphism}\index{epimorphism!superfluous} of $B$-modules is an epimorphism $p \colon M \to N$ whose kernel is a superfluous submodule of $M$.
\end{defin}

\begin{eg}
  The zero submodule is always superfluous.  A nonzero $B$-module $M$ is never superfluous in itself (take $H=0$ in the definition of a superfluous module).
\end{eg}

\begin{defin}[Projective cover]
  Suppose $M$ is a $B$-module.  A \define{projective cover} of $M$ is a projective module $P$, together with a superfluous epimorphism $P \to M$ of $B$-modules.
\end{defin}

\begin{egs} \label{egs:proj-cover}
  \begin{enumerate}[(a)]
    \item Any projective module is its own projective cover.  For example, the projective cover of the $\F$-module $\F$ is $\F$.
    \item \label{eg-item:proj-cover-dual-numbers} The projective cover of the simple $D$-module $D/(x)$ is $D$ itself (see Exercise~\ref{prob:proj-cover-dual-numbers}).\index{algebra of dual numbers}\index{dual numbers}
  \end{enumerate}
\end{egs}

When they exist, the projective cover and associated superfluous epimorphism of a given module $M$ is unique up to isomorphism.  However, in general, projective covers (of modules of arbitrary rings/algebras) need not exist.

There is no natural definition of the (internal) tensor product of two $B$-modules unless one has some additional structure on $B$ (e.g.\ the structure of a Hopf algebra).  However, we will often use the notion of an \define{external tensor product}.  If $A$ and $B$ are both unital associative algebras, $M$ is an $A$-module, and $N$ is a $B$-module, then $M \otimes_\F N$ is an $(A \otimes_\F B)$-module via the action
\[
  (a \otimes b)(m \otimes n) = (am) \otimes (bn),\quad \fa a \in A,\ b \in B,\ m \in M,\ n \in N
\]
(and extending by linearity).

\Exercises

\medskip

\begin{prob} \label{prob:max-ideal-dual-numbers}
  Show that the only maximal ideal of the algebra $D$ of dual numbers is $(x)$.\index{algebra of dual numbers}\index{dual numbers}
\end{prob}

\begin{prob} \label{prob:proj-cover-dual-numbers}
  Show that $D$ is the projective cover of the $D$-module $D/(x)$.\index{algebra of dual numbers}\index{dual numbers}
\end{prob}

%
\section{Finite-dimensional algebras}
%

In these notes, we will be most concerned with finite-dimensional unital associative $\F$-algebras (i.e.\ unital associative $\F$-algebras that are finite-dimensional as an $\F$-vector space).  In this section, we will state some of the important properties of such algebras.  Many of these properties follow from the fact that finite-dimensional algebras are \emph{Artinian rings}\index{Artinian ring}\index{ring!Artinian} (see Exercise~\ref{prob:fd-implies-Artinian}).  A good reference for the properties of Artinian rings and their modules is~\cite{ARS95}.

Throughout this section, we assume that $B$ is a finite-dimensional unital associative $\F$-algebra.  We also assume that all modules are finitely generated.

\begin{prop}[Properties of modules over finite-dimensional algebras] \label{prop:fd-alg-modules-props}
  Recall that $B$ is a finite-dimensional unital associative algebra over a field $\F$.
  \begin{enumerate}[(a)]
    \item Every left (resp.\ right) $B$-module has a projective cover.  (See~\cite[Th.~I.4.2]{ARS95}.)
    \item Every flat (left or right) $B$-module is projective.  (Recall that a right $B$-module $M$ is \emph{flat}\index{flat module}\index{module!flat} if the functor $M \otimes_B -$ maps exact sequences of left $B$-modules to exact sequences of abelian groups (and similarly with left and right interchanged).  Since projective modules are always flat, we have that the notions of flat and projective $B$-modules are equivalent.)  (See~\cite[Th.~28.4]{AF92}.)
    \item The algebra $B$ has a finite number of nonisomorphic simple modules (see \cite[Prop.~I.3.1]{ARS95}).
    \item The projective covers of the (nonisomorphic) simple modules form a complete list of nonisomorphic indecomposable projective $B$-modules.  (See~\cite[Cor.~I.4.5]{ARS95}.)
  \end{enumerate}
\end{prop}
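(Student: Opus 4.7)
The proposition bundles four properties of a finite-dimensional unital associative algebra $B$, and my plan is to reduce each to the fact (verified separately in the exercises) that $B$ is Artinian, so its Jacobson radical $J := J(B)$ is nilpotent and $\bar B := B/J$ is semisimple, i.e.\ a finite product of matrix algebras over division rings. Together with Nakayama's lemma, this structural input controls everything.

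For part (a) I would build the projective cover of a finitely generated $M$ by working modulo $J$: the quotient $M/JM$ is a semisimple $\bar B$-module, decomposing as a finite sum of simples. Lifting a system of orthogonal primitive idempotents from $\bar B$ to $B$ — possible because $J$ is nil — yields indecomposable projectives $Be_i$ and a projective $P = \bigoplus Be_i$ that surjects onto $M/JM$. A lift $p \colon P \to M$ exists by projectivity, is surjective by Nakayama, and has kernel inside $JP$, which is superfluous (again by Nakayama). Part (b) reduces to the standard fact that finitely presented flat modules are projective, noting that over the Artinian ring $B$ every finitely generated module is finitely presented. Part (c) follows from Artin--Wedderburn: any simple $B$-module $S$ is killed by $J$ (since $JS = S$ would contradict nilpotence by Nakayama) and hence corresponds to a simple $\bar B$-module, of which there are finitely many.

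The substantive step is (d). In one direction, let $S$ be simple with projective cover $\pi \colon P(S) \to S$. Since $JS = 0$ we have $JP(S) \subseteq \ker \pi$; conversely the kernel is superfluous and the standard argument shows superfluous submodules lie in $JP(S)$. Hence $P(S)/JP(S) \cong S$. If $P(S) = P_1 \oplus P_2$ were a nontrivial decomposition, then $S \cong P_1/JP_1 \oplus P_2/JP_2$ would force one summand, say $P_2/JP_2$, to vanish, whence $P_2 = JP_2 = 0$ by Nakayama — a contradiction. Thus $P(S)$ is indecomposable. Conversely, given an indecomposable projective $P$, the quotient $P/JP$ is a nonzero semisimple $\bar B$-module; if it split nontrivially, lifting the corresponding orthogonal idempotents of $\End(P/JP)$ to $\End P$ would decompose $P$, contradicting indecomposability. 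So $P/JP$ is simple and the canonical map $P \to P/JP$ is a superfluous epimorphism from a projective — i.e.\ $P$ is the projective cover of $P/JP$. Uniqueness of projective covers then yields the claimed bijection. The main obstacle in executing this plan is the idempotent-lifting step: lifting a \emph{system} of orthogonal idempotents (not just a single one) from $\bar B$ or from $\End(P/JP)$ is the technical crux and requires iterated use of the nilpotence of $J$ together with the orthogonality constraints.
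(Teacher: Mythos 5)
Your outline is correct. Note, however, that the paper itself does not prove this proposition: each part is stated with a citation to \cite{ARS95} or \cite{AF92}, so there is no in-text argument to compare against. What you have written is essentially the standard proof from those references -- the reduction to the Artinian structure theory ($J(B)$ nilpotent, $B/J(B)$ semisimple, Nakayama, idempotent lifting modulo a nil ideal), and it hangs together: in (a) the kernel of the lifted map $P \to M$ lands in $JP$ because $P/JP \to M/JM$ is an isomorphism, and $JP$ is superfluous by Nakayama; in (d) the two directions (projective covers of simples are indecomposable; indecomposable projectives have simple tops) combine with uniqueness of projective covers exactly as you say. Two small remarks. First, the cited result \cite[Th.~28.4]{AF92} for part (b) is the stronger statement that \emph{all} flat modules over a perfect ring are projective; your finitely-presented-flat argument proves only the finitely generated case, but that is all the paper needs given its standing assumption that modules are finitely generated. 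Second, the idempotent-lifting step you flag as the crux is genuinely the technical heart, but it is routine here: the kernel of $\End_B(P) \to \End_B(P/JP)$ consists of endomorphisms with image in $JP$, hence is nil by nilpotence of $J$, and a finite orthogonal system of idempotents lifts modulo a nil ideal by the usual inductive correction argument. So the proposal is a sound, self-contained route to results the paper simply quotes.
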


\begin{eg} \label{eg:dual-numbers-simples-and-projs}\index{algebra of dual numbers}\index{dual numbers}
  As seen in Examples~\ref{egs:simple-module}\eqref{eg-item:simple-module-dual-numbers} and~\ref{egs:proj-cover}\eqref{eg-item:proj-cover-dual-numbers}, the algebra $D$ of dual numbers has one simple module, namely $D/(x)$.  Its projective cover $D$ is the only indecomposable projective $D$-module (up to isomorphism).
\end{eg}

The proof of the following lemma is an exercise.

\begin{lem} \label{lem:hom-proj-simple}
  Suppose $V$ is a simple $B$-module with projective cover $P$.  Then, for any simple $B$-module $W$, we have an isomorphism of $\F$-modules
  \[
    \Hom_B (P,W) \cong \Hom_B(V,W) =
    \begin{cases}
      0 & \text{if } W \not \cong V, \\
      \End_B(V) & \text{if } W \cong V,
    \end{cases}
  \]
  where the conditions $W \not \cong V$ and $W \cong V$ refer to isomorphisms of $B$-modules.
\end{lem}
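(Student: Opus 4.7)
The natural approach is to build an isomorphism $\Hom_B(V,W) \xrightarrow{\sim} \Hom_B(P,W)$ and then compute $\Hom_B(V,W)$ by a Schur-type argument.

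\medskip

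Let $\pi \colon P \to V$ denote the superfluous epimorphism witnessing that $P$ is a projective cover of $V$, and write $K = \ker \pi$, a superfluous submodule of $P$. Define
\[
  \Phi \colon \Hom_B(V,W) \to \Hom_B(P,W),\quad g \mapsto g \circ \pi.
\]
The plan is to show $\Phi$ is an isomorphism, and then to identify $\Hom_B(V,W)$ with the right-hand side of the stated formula.

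\medskip

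\textbf{Injectivity of $\Phi$.} This is immediate: since $\pi$ is surjective, $g \circ \pi = 0$ forces $g = 0$.

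\medskip

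\textbf{Surjectivity of $\Phi$.} Given $f \colon P \to W$, I need to show $K \subseteq \ker f$, so that $f$ factors through $P/K \cong V$. If $f = 0$ this is trivial. Otherwise, since $W$ is simple, $f$ is surjective and $\ker f$ is a maximal submodule of $P$ (as $P/\ker f \cong W$ is simple). Now look at $K + \ker f$: it is a submodule of $P$ containing $\ker f$, so by maximality it equals either $\ker f$ or $P$. If $K + \ker f = P$, then by the superfluousness of $K$ we get $\ker f = P$, contradicting $f \neq 0$. Hence $K + \ker f = \ker f$, i.e.\ $K \subseteq \ker f$, so $f$ descends to a map $\bar f \colon V \to W$ with $\Phi(\bar f) = f$. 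This is the key step, and the only place where the superfluous hypothesis is actually used.

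\medskip

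\textbf{Computing $\Hom_B(V,W)$.} This is the standard Schur-style argument. Any nonzero $h \colon V \to W$ must have $\ker h \subsetneq V$, so $\ker h = 0$ by simplicity of $V$; similarly $\im h \neq 0$ is a submodule of the simple module $W$, hence $\im h = W$. Thus $h$ is an isomorphism. So if $W \not\cong V$ then $\Hom_B(V,W) = 0$, while if $W \cong V$ then composing with a fixed isomorphism $W \to V$ identifies $\Hom_B(V,W)$ with $\End_B(V)$ as $\F$-modules (the $\F$-action comes from the $\F$-module structure on $V$ and $W$ induced by $B$ being an $\F$-algebra). Combining with the isomorphism $\Phi$ yields the claim.
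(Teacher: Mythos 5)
Your proof is correct and follows essentially the same route as the paper's: the isomorphism $g \mapsto g \circ \pi$ induced by the superfluous epimorphism, with surjectivity extracted from the superfluousness of $\ker\pi$ and the value of $\Hom_B(V,W)$ from Schur's Lemma. If anything, your surjectivity step is organized slightly more cleanly than the paper's, since you show $K \subseteq \ker f$ directly from the maximality of $\ker f$ rather than splitting into cases according to whether $W \cong V$.
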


\begin{defin}[Simple algebra]
  An associative algebra $B$ is called \emph{simple}\index{simple algebra}\index{algebra!simple} if it has no nontrivial proper (two-sided) ideals and $B^2 = \{ab\ |\ a,b \in B\} \ne \{0\}$.  (Note that if $B$ is a nonzero unital associative algebra, then the second condition is automatically satisfied.)
\end{defin}

\begin{eg}
  The algebra of $n \times n$ matrices ($n \ge 1$) with entries in $\F$ is a simple $\F$-algebra.
\end{eg}

\begin{defin}[Semisimple]
  A finite-dimensional unital associative algebra is \emph{semisimple}\index{semisimple algebra}\index{algebra!semisimple} if it is isomorphic to a Cartesian product of simple subalgebras. A module over an associative algebra is \emph{semisimple}\index{semisimple module}\index{module!semisimple} if it is isomorphic to a direct sum of simple submodules.
\end{defin}

\begin{lem}[Maschke's Theorem] \index{Maschke's Theorem} \label{lem:Maschke}
  Suppose $\Gamma$ is a finite group and the characteristic of the field $\F$ does not divide the order of $\Gamma$.  Then the group algebra $\F[\Gamma]$ is semisimple.
\end{lem}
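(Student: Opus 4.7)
The plan is the classical averaging argument, followed by a Wedderburn-type identification of $\F[\Gamma]$ as a product of simple algebras. Let $n = |\Gamma|$, which by hypothesis is nonzero in $\F$ and hence invertible.

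First I would prove the key technical fact: for every $\F[\Gamma]$-module $M$ and every $\F[\Gamma]$-submodule $N \subseteq M$, there exists an $\F[\Gamma]$-submodule $N'$ with $M = N \oplus N'$. To do this, choose any $\F$-linear projection $\pi \colon M \to N$ (which exists because $M$ and $N$ are $\F$-vector spaces and $N$ is a direct summand of $M$ over $\F$), and then define the averaged map
\[
  \tilde{\pi}(m) = \frac{1}{n} \sum_{g \in \Gamma} g \cdot \pi(g^{-1} m), \quad m \in M.
\]
A short check shows $\tilde\pi$ is $\F[\Gamma]$-linear, satisfies $\tilde\pi|_N = \id_N$, and is a projection onto $N$, so $N' = \ker \tilde\pi$ is the desired $\F[\Gamma]$-complement.

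Next I would apply this to the regular module $\F[\Gamma]$. By iterating the splitting on submodules of the (finite-dimensional) regular representation, I obtain a decomposition
\[
  \F[\Gamma] = V_1^{n_1} \oplus V_2^{n_2} \oplus \cdots \oplus V_r^{n_r}
\]
as a left $\F[\Gamma]$-module, where $V_1, \dots, V_r$ is a complete list of the pairwise non-isomorphic simple left $\F[\Gamma]$-modules appearing. Then, using the standard identification $\F[\Gamma]^{\mathrm{op}} \cong \End_{\F[\Gamma]}(\F[\Gamma])$ (sending $a$ to right-multiplication by $a$) together with Schur's lemma, I would compute
\[
  \F[\Gamma]^{\mathrm{op}} \;\cong\; \End_{\F[\Gamma]}\!\left( \bigoplus_{i=1}^{r} V_i^{n_i} \right) \;\cong\; \prod_{i=1}^{r} \mathrm{Mat}_{n_i}\!\left(\End_{\F[\Gamma]}(V_i)\right).
\]
Each $\End_{\F[\Gamma]}(V_i)$ is a division $\F$-algebra (Schur), so each factor $\mathrm{Mat}_{n_i}(\End_{\F[\Gamma]}(V_i))$ is a simple $\F$-algebra. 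Taking opposites (which preserves being a product of simple algebras, since the opposite of a matrix algebra over a division ring is again such) exhibits $\F[\Gamma]$ itself as a Cartesian product of simple subalgebras, which is the definition of semisimple given in the text.

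The main obstacle is the passage from the module-theoretic statement (every submodule of the regular module has a complement) to the required algebra-theoretic statement (the algebra is a product of simple subalgebras). The averaging argument is short; the real work is the Wedderburn-style identification via $\End_{\F[\Gamma]}(\F[\Gamma])$ and Schur's lemma, and care must be taken with the opposite-algebra convention when going from left-module endomorphisms back to a statement about $\F[\Gamma]$ itself.
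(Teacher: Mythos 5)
The paper states Maschke's Theorem without proof, treating it as a standard fact, so there is no in-text argument to compare against; judged on its own, your proof is correct and is the classical one. The averaging operator $\tilde\pi$ is indeed $\F[\Gamma]$-linear, restricts to the identity on $N$, and has image in $N$, so $\ker\tilde\pi$ is a complement, and iterating on the finite-dimensional regular module gives the decomposition into simples. You were also right to push through the Wedderburn identification $\F[\Gamma]^{\mathrm{op}} \cong \End_{\F[\Gamma]}(\F[\Gamma]) \cong \prod_i \mathrm{Mat}_{n_i}(\End_{\F[\Gamma]}(V_i))$ rather than stopping at ``every module is a sum of simples,'' since the definition of semisimple adopted in these notes is the algebra-level one (isomorphic to a product of simple algebras); the care with the opposite-algebra convention is exactly the point that is easy to fumble, and you handled it correctly.
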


\begin{prop}[Properties of modules over semisimple algebras] \label{prop:semisimple-module-props}
  Suppose $B$ is a semisimple finite-dimensional unital associative algebra.
  \begin{enumerate}[(a)]
    \item All $B$-modules are semisimple.
    \item All $B$-modules are projective.  In particular, every $B$-module is its own projective cover.
  \end{enumerate}
\end{prop}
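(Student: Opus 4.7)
The plan is to reduce both parts to the single structural fact, call it the \emph{complement lemma}: if $M = \sum_{j \in J} S_j$ is a sum of simple submodules and $N \subseteq M$ is any submodule, then $M = N \oplus \bigoplus_{j \in J'} S_j$ for some $J' \subseteq J$. This is the standard Zorn's lemma argument: choose $J'$ maximal among subsets for which the sum $N + \sum_{j \in J'} S_j$ is internally direct; for each $j \in J$ the intersection $S_j \cap (N + \sum_{j' \in J'} S_{j'})$ equals either $0$ or $S_j$ by simplicity, and maximality of $J'$ rules out the former, so the sum equals all of $M$. Specializing $N = 0$ shows that a sum of simples is already an (internal) direct sum of simples, so the module is semisimple, and it also shows that every submodule of a semisimple module has a direct complement.

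Next I would verify that $B$, viewed as a left module over itself, is semisimple. Write $B \cong B_1 \times \cdots \times B_k$ with each $B_i$ a simple finite-dimensional $\F$-algebra. Since $B_i$ is finite-dimensional it contains a minimal nonzero left ideal $I_i$, which is a simple left $B_i$-module and hence a simple left $B$-module via the projection $B \twoheadrightarrow B_i$. For any $b \in B_i$ the map $x \mapsto xb$ is a surjective left $B_i$-module homomorphism $I_i \to I_i b$, so by simplicity $I_i b$ is either zero or again a minimal left ideal. Consequently $\sum_{b \in B_i} I_i b$ is a nonzero two-sided ideal of $B_i$; since $B_i$ is a simple algebra this sum equals $B_i$. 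Hence $B_i$, and therefore $B = \bigoplus_i B_i$, is a sum of simple left $B$-submodules, so $B$ is semisimple as a left module over itself by the complement lemma.

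To conclude: any free $B$-module $B^{(I)}$ is a direct sum of simples, hence semisimple, and an arbitrary $B$-module $M \cong F/K$ is a quotient of such a free module $F$; projecting each simple summand of $F$ presents $M$ as a sum of simples (each image being zero or isomorphic to the original simple summand), so $M$ is semisimple, proving (a). For (b), given any short exact sequence $0 \to L \to N \to P \to 0$, the complement lemma applied inside the semisimple module $N$ provides a direct complement of the image of $L$, which splits the sequence and shows $P$ is projective. Finally, if $P$ is projective then $\id_P \colon P \to P$ is an epimorphism with kernel equal to the zero submodule, which is superfluous by the earlier example, so $P$ is its own projective cover. The main obstacle is really just the complement lemma together with the two-sided ideal argument showing each simple $B_i$ is semisimple over itself; both are elementary but indispensable.
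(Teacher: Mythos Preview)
Your argument is correct and complete: the complement lemma (proved via Zorn's lemma) combined with the two-sided ideal trick showing each simple factor $B_i$ is a sum of minimal left ideals is exactly the standard route to these facts, and your deductions of (a) and (b) from it are clean. One small remark: since the paper works under the standing hypothesis that all modules are finitely generated, Zorn's lemma is not strictly needed---finite-dimensionality lets you choose a maximal $J'$ directly---but your more general argument does no harm.

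As for comparison with the paper: there is nothing to compare. The paper states Proposition~\ref{prop:semisimple-module-props} as background without proof, treating it as a standard structural fact about semisimple algebras (in the same spirit as the unproved Proposition~\ref{prop:fd-alg-modules-props} just above it, which cites \cite{ARS95} and \cite{AF92}). Your write-up would serve perfectly well as the omitted justification.
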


\Exercises

\medskip

\begin{prob} \label{prob:fd-implies-Artinian}
  Prove that a finite-dimensional unital associative algebra is an Artinian ring.  (Recall that a ring is \emph{Artinian}\index{Artinian ring}\index{ring!Artinian} if it satisfies the descending chain condition on ideals.)
\end{prob}

\begin{prob}
  Prove Lemma~\ref{lem:hom-proj-simple}.
\end{prob}

\solution{
  By Schur's Lemma, we have $\Hom_B(V,W)=0$ if $V$ and $W$ are nonisomorphic simple $B$-modules.  Thus, it suffices to prove the isomorphism $\Hom_B(P,W) \cong \Hom_B(V,W)$.  Let $p \colon P \to V$ be a superfluous epimorphism.  We will show that the map
  \[
    \varphi \colon \Hom_B(V,W) \to \Hom_B(P,W),\quad \varphi(f) = f \circ p,
  \]
  is an isomorphism.  We first show that $\varphi$ is surjective.  Let $g \in \Hom_B(P,W)$.  Since $V$ and $W$ are simple, the ideals $\ker p$ and $\ker g$ are maximal.  If $W \not \cong V$, then $\ker p \ne \ker g$ and so we must have $\ker p + \ker g = P$.  Since $\ker p$ is superfluous, this implies that $\ker g = P$, so $g=0$.  Hence $\Hom_B(P,W)=0$ and $\varphi$ is surjective.  If $f \ne 0$, then we must have $\ker p = \ker f$.  Thus $f$ factors through $P/(\ker p) \cong V$.  In other words, $f$ in in the image of $\varphi$.  So $\varphi$ is again surjective.  It remains to show that $\varphi$ is injective.  But this follows from the fact that, since $p$ is surjective, we have $g \circ p = g \circ p \implies f=g$ for all $f,g \in \Hom_B(V,W)$.
}

%
\chapter{Weak categorification}
\pagestyle{headings}
%

In this chapter we explain the idea of \define{weak categorification}\index{categorification}.  We begin by presenting an overview of the important ingredients: categories, functors, and Grothendieck groups.  We refer the reader to~\cite{LM12} for a more detailed introduction to Grothendieck groups (in the context of categorification) and operations induced on them by various additional structure on the category in question.

%
\section{Grothendieck groups}
%

We again fix a unital associative $\F$-algebra $B$.  Associated to $B$ are two categories that will be of particular interest to us.

\begin{defin}[Categories $B\md$ and $B\pmd$]
  We let $B\md$\index{mod} denote the category of finitely generated left $B$-modules and let $B\pmd$\index{pmod} denote the category of finitely generated projective left $B$-modules.  Thus $B\pmd$ is a full subcategory of $B\md$.
\end{defin}

Let $\cC$ be a subcategory of the category of $B$-modules.  We will be primarily interested in the case where $\cC$ is either $B\md$ or $B\pmd$.

\begin{defin}[Split Grothendieck group] \label{def:split-Groth}
  Let $F(\cC)$ be the free abelian group with basis the isomorphism classes $[M]$ of objects $M$ in $\cC$, and let $N^\spl(\cC)$ be the subgroup generated by the elements $[M_1] - [M_2] + [M_3]$ for every split exact sequence $0 \to M_1 \to M_2 \to M_3 \to 0$ in $\cC$ (equivalently, by $[M_3] - [M_1] - [M_2]$ for every $M_1, M_2, M_3 \in \cC$ with $M_3 = M_1 \oplus M_2$).  The \define{split Grothendieck group} of $\cC$, denoted $\cK^\spl_0(\cC)$, is the quotient group $F(\cC)/N^\spl(\cC)$.   We will usually denote the image of $[M]$ in $\cK^\spl_0(\cC)$ again by $[M]$.
\end{defin}

\begin{eg} \label{eg:Fmd-split-Groth-group}\index{K@$\cK^\spl_0$}
  Note that $\F\md$ is the category of all finite-dimensional $\F$-vector spaces and $\cK^\spl_0(\F\md) \cong \Z$.  Indeed, consider the surjective homomorphism
  \[
    f \colon F(\F\md) \to \Z,\ f([V])=\dim(V)
  \]
  (and extended by linearity).  Since dimension is additive (i.e.\ $\dim(V \oplus W)=\dim(V)+\dim(W)$), we have $N^\spl(\F\md) \subseteq \ker(f)$.  Now, let $\sum_{i=1}^n c_i [V_i]$ be an arbitrary element of $\ker(f)$.  We have $\sum_{i=1}^n c_i \dim(V_i)= f(\sum_{i=1}^n c_i [V_i]) = 0$.  In $\cK^\spl_0(\F\md)$, since $[V_i]=\dim(V_i)[\F]$, we have $\sum_{i=1}^n c_i [V_i] = (\sum_{i=1}^n c_i \dim(V_i)) [\F] = 0$, so $\ker(f) = N^\spl(\F\md)$.  Thus, by the First Isomorphism Theorem,
  \[
    \cK^\spl_0(\F\md) \cong F(\F\md)/\ker(f) \cong \Z.
  \]
\end{eg}

\begin{defin}[Grothendieck group] \index{K@$\cK_0$}
  As in Definition~\ref{def:split-Groth}, let $F(\cC)$ be the free abelian group with basis the isomorphism classes $[M]$ of objects $M$ in $\cC$.  Let $N(\cC)$ be the subgroup of $F(\cC)$ generated by the elements $[M_1] - [M_2] + [M_3]$ for every short exact sequence $0 \to M_1 \to M_2 \to M_3 \to 0$ in $\cC$.  The \define{Grothendieck group} of $\cC$, denoted $\cK_0(\cC)$, is the quotient group $F(\cC)/N(\cC)$.   We will usually denote the image of $[M]$ in $\cK_0(\cC)$ again by $[M]$.
\end{defin}

\begin{rem}
  The definition of a Grothendieck group remains valid for any abelian category $\cC$.
\end{rem}

\begin{eg}
  Note that every short exact sequence in $\F\md$ splits.  Thus, the split Grothendieck group and the Grothendieck group of $\F\md$ are the same.  Therefore, by Example~\ref{eg:Fmd-split-Groth-group}, $\cK_0(\F\md) \cong \Z$.
\end{eg}

\begin{defin}[$G_0(B)$ and $K_0(B)$] \index{G@$G_0$}\index{K@$K_0$}
  Let $G_0(B) = \cK_0(B\md)$ and $K_0(B) = \cK_0(B\pmd)$.
\end{defin}

\begin{rem}
  By the definition of a projective module (Definition~\ref{def:projective}), every short exact sequence in $B\pmd$ splits.  Thus $K_0(B) = \cK_0^\spl(B\pmd)$.
\end{rem}

\begin{lem}
  If $B$ is semisimple, then all short exact sequences in $B$-mod split.  Thus all modules are projective.  Hence, $B\md=B\pmd$ and $G_0(B)=K_0(B)$.
\end{lem}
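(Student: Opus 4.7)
The plan is to leverage Proposition~\ref{prop:semisimple-module-props}, which already does the heavy lifting by telling us that when $B$ is semisimple, every finitely generated $B$-module is projective. Given that input, each of the three assertions in the lemma follows almost mechanically.

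First, I would show that every short exact sequence $0 \to M_1 \to M_2 \to M_3 \to 0$ in $B\md$ splits. The key observation is that $M_3$ is projective by Proposition~\ref{prop:semisimple-module-props}(b). Then Definition~\ref{def:projective} says precisely that any short exact sequence ending in a projective module splits. So splitting is immediate.

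Second, the equality $B\md = B\pmd$ follows directly from Proposition~\ref{prop:semisimple-module-props}(b): $B\pmd$ is by definition the full subcategory of $B\md$ whose objects are the projective modules, but in the semisimple case every object of $B\md$ is projective, so the two categories have the same objects (and the same morphisms, being full subcategories of $B$-modules). For the final equality $G_0(B) = K_0(B)$, I would simply chain the definitions: $G_0(B) = \cK_0(B\md) = \cK_0(B\pmd) = K_0(B)$, where the middle equality uses $B\md = B\pmd$ just established.

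There is no real obstacle here; the lemma is essentially a repackaging of Proposition~\ref{prop:semisimple-module-props}. The only subtle point worth flagging is that the splitting statement must be proved \emph{before} invoking the equality of Grothendieck groups rather than deducing one from the other, since the defining relations of $\cK_0(B\md)$ involve all short exact sequences, not only split ones; without the splitting fact, one could not simplify the relations at all. Once the splitting is in hand, however, the subgroups $N(\cC)$ and $N^\spl(\cC)$ in Definition~\ref{def:split-Groth} coincide for $\cC = B\md = B\pmd$, so all the relevant Grothendieck-type groups become the same abelian group.
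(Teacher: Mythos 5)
Your proposal is correct and follows exactly the route the paper takes: the paper's proof is simply the one-line remark that the lemma follows immediately from Proposition~\ref{prop:semisimple-module-props}, and your write-up just fills in the (easy) details of that deduction. Nothing further is needed.
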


\begin{proof}
  This follows immediately from Proposition~\ref{prop:semisimple-module-props}.
\end{proof}

For the remainder of this section, we assume that $B$ is finite-dimensional.  Let $V_1,\dotsc,V_s$ be a complete list of nonisomorphic simple $B$-modules.  If $P_i$ is the projective cover of $V_i$ for $i=1,\dotsc,s$, then $P_1\dotsc,P_s$ is a complete list of nonisomorphic indecomposable projective $B$-modules (see Proposition~\ref{prop:fd-alg-modules-props}).  It follows from the Jordan-H\"older Theorem (see~\cite[Th.~I.1.7]{ARS95}) that
\[
  G_0(B) = \bigoplus_{i=1}^s \Z[V_i].
\]
The class $[M] \in G_0(B)$ of any $M \in B\md$ is the sum (with multiplicity) of the classes of the simple modules appearing in any composition series of $M$.

Since any $P \in B\pmd$ can be written uniquely as a sum of indecomposable projective modules, we also have
\[
  K_0(B) = \bigoplus_{i=1}^s \Z[P_i].
\]

\begin{eg} \label{eg:G0-K0-dual-numbers}\index{algebra of dual numbers}\index{dual numbers}
  By Example~\ref{eg:dual-numbers-simples-and-projs}, for the algebra $D$ of dual numbers we have
  \[
    G_0(D) = \Z[D/(x)],\quad K_0(D) = \Z[D].
  \]
\end{eg}

\begin{rem}
  Note that, in situations such as Example~\ref{eg:G0-K0-dual-numbers}, $\Z[D]$ denotes the one-dimensional $\Z$-module spanned by $[D]$.  This should not be confused with the notation $\Z[x]$, where $x$ is an indeterminate, which is the polynomial algebra in one variable.
\end{rem}

We have a natural bilinear form
\begin{gather}
  \langle -, - \rangle \colon K_0(B) \otimes_\Z G_0(B) \to \Z,\quad \text{given by} \nonumber \\
  \langle [P], [M] \rangle = \dim_\F \Hom_B(P,M),\quad \fa P \in B\pmd,\ M \in B\md \label{eq:bilinear-form}
\end{gather}
(extending by bilinearity).  Here $\Hom_B(P,M)$ denotes the $\F$-vector space of all $B$-module homomorphisms from $P$ to $M$.  This form is well-defined in the first argument since $\Hom_B(-,M)$ is an additive functor for all $M \in B\md$, and $K_0(B) = \cK^\spl_0(B\pmd)$.  It is well-defined in the second argument since $\Hom_B(P,-)$ is an exact functor for $P \in B\pmd$.  Note that it is crucial that $P$ is projective here.  For example, we do not, in general, have an analogous bilinear form $G_0(B) \otimes G_0(B) \to \Z$ (unless, of course, $B$ is semisimple, in which case $G_0(B)=K_0(B)$).

By Lemma~\ref{lem:hom-proj-simple}, we have
\begin{equation} \label{eq:pairing-orthogonal}
  \langle [P_i], [V_j] \rangle =
  \begin{cases}
    0 & \text{if } i \ne j, \\
    \dim_\F \End_B (V_i) \ge 1 & \text{if } i =j.
  \end{cases}
\end{equation}
Therefore, the form $\langle -, - \rangle$ is nondegenerate.  For a commutative ring $R$ and $R$-module $V$, let $V^\vee$ denote the dual space.  Recall that a bilinear form $\langle -, - \rangle \colon V \otimes_R W \to R$ of $R$-modules induces maps
\begin{gather*}
  V \to W^\vee,\ v \mapsto (w \mapsto \langle v,w \rangle), \\
  W \to V^\vee,\ w \mapsto (v \mapsto \langle v,w \rangle).
\end{gather*}
If the form is nondegenerate, then these maps are injective.  If these maps are isomorphisms, then we say that the form is a \define{perfect pairing}.

\begin{eg}
  Let $R=\Z$ and consider the $\Z$-linear form $\Z \otimes_\Z \Z \to \Z$ given by $a \otimes b =2ab$.  This form is nondegenerate but is not a perfect pairing.  However, if $R$ is a field, then any nondegenerate bilinear form on finite-dimensional $R$-modules is a perfect pairing.
\end{eg}

If $\F$ is algebraically closed, then, by Schur's Lemma, we have
\[
  \langle [P_i], [V_j] \rangle = \delta_{i,j},\quad \fa 1 \le i,j \le s,
\]
and the form~\eqref{eq:bilinear-form} is a perfect pairing.

\begin{eg}
  Consider the algebra $D$ of dual numbers.\index{algebra of dual numbers}\index{dual numbers} Let $f \colon D \to D/(x)$ be a homomorphism of $D$-modules.  Since the codomain of $f$ is a simple $D$-module, the kernel of $f$ must be a maximal ideal of $D$.  But the only maximal ideal of $D$ is $(x)$ (see Exercise~\ref{prob:max-ideal-dual-numbers}).  Thus $f$ factors through a map $D/(x) \to D/(x)$.  Since $D/(x)$ is a one-dimensional $\F$-vector space, any such map is simply multiplication by a scalar.  Therefore, $\Hom_D (D, D/(x)) \cong \F$, and so $\langle D, D/(x) \rangle = 1$.
\end{eg}

\Exercises

\medskip

\begin{prob}
  Fill in the details of the claim that the bilinear form~\eqref{eq:bilinear-form} is well-defined.
\end{prob}

\solution{
  Since $F(B\pmd)$ and $F(B\md)$ are both free $\Z$-modules, we can define a bilinear form by specifying its value on pairs of basis elements.  So the form~\eqref{eq:bilinear-form} is well-defined as a bilinear form $\langle -, - \rangle \colon F(B\pmd) \otimes_\Z F(B\md) \to \Z$ (here we use that the dimension of $\Hom_B(P,M)$ depends only on the isomorphism classes of $P$ and $M$).  To show that it descends to a bilinear form $K_0(B) \otimes_\Z G_0(B) \to Z$, we must show that
  \[
    \langle N(B\pmd), F(B\md) \rangle = 0 \quad \text{and} \quad \langle F(B\pmd), N(B\md) \rangle = 0.
  \]
  Now, $N(B\md)$ is generated by elements of the form $[M_1] - [M_2] + [M_3]$ for all short exact sequences $0 \to M_1 \to M_2 \to M_3 \to 0$ in $B\md$.  Consider such a short exact sequence and let $P \in B\pmd$.  Since $P$ is projective, $\Hom_B(P,-)$ is an exact functor.  Thus, we have an exact sequence of $\F$-vector spaces
  \[
    0 \to \Hom_B(P,M_1) \to \Hom_B(P,M_2) \to \Hom_B(P,M_3) \to 0.
  \]
  It follows that $\dim_\F \Hom_B(P,M_2) = \dim_\F \Hom_B(P,M_1) + \dim_\F \Hom_B(P,M_3)$.  Thus
  \begin{align*}
    \langle [P], [M_1] - [M_2] + [M_3] \rangle &= \langle [P],[M_1] \rangle - \langle [P] , [M_2] \rangle + \langle [P], [M_3] \rangle \\
    &= \dim_F \Hom_B(P,M_1) - \dim_F \Hom_B(P,M_2) + \dim_\F \Hom_B(P,M_3) \\
    &=0.
  \end{align*}
  Hence $\langle N(B\pmd), F(B\md) \rangle = 0$.  The proof that $\langle F(B\pmd), N(B\md) \rangle = 0$ is analogous.
}

%
\section{Functors} \label{sec:functors}
%

Suppose $B_1$ and $B_2$ are both unital associative $\F$-algebras and that $\cC_1$ (resp.\ $\cC_2$) is a subcategory of the category of $B_1$-modules (resp.\ $B_2$-modules).  A functor $F \colon \cC_1 \to \cC_2$ is said to be \emph{additive}\index{additive functor}\index{functor!additive} if $F(M \oplus N) \cong F(M) \oplus F(N)$ for all $M,N \in \cC_1$.  If $F$ is additive, then it induces a group homomorphism
\[
  [F] \colon \cK^\spl_0(\cC_1) \to \cK^\spl_0(\cC_2),\ [F]([M]) = [F(M)],\quad \fa M \in \cC_1.
\]
Similarly, if $F$ is exact, then it induces a group homomorphism
\[
  [F] \colon \cK_0(\cC_1) \to \cK_0(\cC_2),\ [F]([M]) = [F(M)],\quad \fa M \in \cC_1.
\]

Fix two finite-dimensional unital associative $\F$-algebras $A$ and $B$.  Recall that an $(A,B)$-bimodule $M$ is, by definition, a left $A$-module and a right $B$-module, where the $A$ and $B$ actions commute.

\begin{eg}
  The algebra $A$ is an $(A,A)$-bimodule via left and right multiplication.  More generally, if $B$ is a subalgebra of $A$, then we can consider $A$ as an $(A,B)$-bimodule, a $(B,A)$-bimodule, or a $(B,B)$-bimodule.
\end{eg}

If $B$ and $C$ are subalgebras of $A$, we let $_BA_C$ denote $A$, considered as a $(B,C)$-bimodule.  If $B$ or $C$ is equal to $A$, we will often omit the corresponding subscript.  So, for example, $A_B$ denotes $A$, considered as an $(A,B)$-bimodule.

Suppose $M$ is an $(A,B)$-bimodule.  Then we have the functor
\[
  M \otimes_B - \colon B\md \to A\md,\quad N \mapsto M \otimes_B N.
\]
Here we consider $M \otimes_B N$ as an $A$-module via the action
\[
  a (m \otimes n) = (am) \otimes n
\]
(and extending by linearity).  The functor $M \otimes_B -$ is always right exact.  It is also left exact (hence exact) precisely when $M$ is projective as a right $B$-module (since projective is the same as flat in our setting -- see Proposition~\ref{prop:fd-alg-modules-props}).  If $M$ is projective as a left $A$-module, then $M \otimes_B -$ maps projective modules to projective modules (see Exercise~\ref{prob:bimodule-functor-left-proj}) and thus restricts to give a functor $B\pmd \to A\pmd$.  Any homomorphism $M \to N$ of $(A,B)$-bimodules gives rise to a natural transformation of functors $M \otimes_B - \to N \otimes_B -$ (see Exercise~\ref{prob:bimodule-hom-natural-trans}).

Now suppose $B$ is a subalgebra of $A$.  Then we have \define{induction} and \define{restriction} functors
\begin{gather*}
  \Ind^A_B \colon B\md \to A\md,\quad \Ind^A_B N = A \otimes_B N,\quad \fa N \in B\md, \\
  \Res^A_B \colon A\md \to B\md,\quad \Res^A_B M = {_BA} \otimes_A M,\quad \fa M \in A\md.
\end{gather*}
If $A$ is projective as a left and right $B$-module, then the above functors are both exact and also induce functors on the corresponding categories of finitely generated projective modules.

\Exercises

\medskip

\begin{prob} \label{prob:bimodule-functor-left-proj}
  Show that if $M$ is an $(A,B)$-bimodule that is projective as a left $A$-module, then the functor $M \otimes_B -$ maps projective $B$-modules to projective $A$-modules.
\end{prob}

\solution{
  Suppose that $M$ is an $(A,B)$-bimodule that is projective as a left $A$-module and that $P$ is a projective $B$-module.  Then there exists a $B$-module $P'$ and nonnegative integer $n$ such that $P \oplus P' \cong B^{\oplus n}$ (as left $B$-modules).  We have isomorphisms of left $A$-modules
  \[
    (M \otimes_B P) \oplus (M \otimes_B P') \cong M \otimes_B (P \oplus P') \cong M \otimes_B B^n \cong M^{\oplus n}.
  \]
  Thus, $M \otimes_B P$ is a summand of the projective left $A$-module $M^{\oplus n}$ and is therefore projective.
}

\begin{prob} \label{prob:bimodule-hom-natural-trans}
  Suppose $f \colon M \to N$ is a homomorphism of $(A,B)$-bimodules.  Use $f$ to define a natural transformation of functors $M \otimes_B - \to N \otimes_B -$.  Prove that what you define is indeed a natural transformation.
\end{prob}

%
\section{Weak categorification}
%

We are now in a position to describe the general idea behind (weak) categorification.  We refer the reader to \cite{KMS09,Maz12} for further discussion and examples.

Suppose $R$ is a commutative ring.  Let $B$ be a unital associative $R$-algebra, and let $\{b_i\}_{i \in I}$ be a fixed generating set for $B$. If $M$ is a $B$-module, then the action of each $b_i$ defines an $R$-linear endomorphism $b_i^M$ of $M$.

\begin{defin}[Naive categorification] \label{def:naive-cat}
  A \define{naive categorification}\index{categorification!naive} of $(B,\{b_i\}_{i \in I}, M)$ is a tuple $(\mathcal{M}, \varphi, \{F_i\}_{i \in I})$, where $\mathcal{M}$ is an abelian category, $\varphi \colon \mathcal{K}_0(\mathcal{M}) \otimes_\Z R \to M$ is an isomorphism, and, for each $i \in I$, $F_i \colon \mathcal{M} \to \mathcal{M}$ is an exact endofunctor of $\mathcal{M}$ such that the following diagram is commutative:
  \[
    \xymatrix{
          \mathcal{K}_0(\mathcal{M}) \otimes_\Z R \ar[r]^{[F_i]} \ar[d]_\varphi & \mathcal{K}_0(\mathcal{M}) \otimes_\Z R \ar[d]_\varphi \\
          M \ar[r]^{b_i^M} & M
        }
  \]
  (We simply write $[F_i]$ above for $[F_i] \otimes \id$.)  In other words, the action of $F_i$ lifts the action of $b_i$.
\end{defin}

The notion of a naive categorification is extremely weak.  We only require that the functors $F_i$ induce the right maps on the level of the Grothendieck group.  A stronger notion would be to categorify the relations amongst the generators $b_i$ (or the induced maps $b_i^M$).  That is, given a set of relations of $B$ (generating all the relations in $B$) we want isomorphisms of functors that descend to these relations in the Grothendieck group.  A naive categorification, together with this extra data, is called a \define{weak categorification}\index{categorification!weak}.

In our examples, the category $\mathcal{M}$ will be a sum of categories of modules.  For example, we will define a collection of finite-dimensional algebras $B_j$, $j \in J$, and set $\mathcal{M}$ equal to $\bigoplus_{j \in J} B_j\md$ (or $\bigoplus_{j \in J} B_j\pmd$).  In this case, the classes of the simple (or projective) $B_j$-modules descend to a distinguished basis of $M$.  For a simple $B_j$-module $V$ and $i \in I$, $[F_i(V)]$ is a sum of classes of simple modules.  Thus, the classes of simple modules give a basis of $M$ for which all the structure coefficients of the action of the basis $\{b_i\}_{i \in I}$ are nonnegative integers.  The existence of a such a distinguished positive integral basis is one of the nicest features of categorification.  Indeed, the knowledge that such a basis of a module $M$ exists is a strong hint that there may be an interesting categorification of $M$.

Note that Definition~\ref{def:naive-cat} (and hence also the notion of a weak categorification) depend on a generating set for the algebra $B$.  In this sense, it is probably more accurate to speak of a categorification of a \emph{presentation} of a module.  However, in some cases it is possible to develop a categorification of a module that is independent of any generating set.  We will see an example of this in Proposition~\ref{prop:fock-space-weak-cat}.

\begin{eg}[{\cite[Ex.~2.8]{Maz12}}] \label{eg:Maz}
  Let $B = \C[b]/(b^2-2b)$ and consider the generating set $\{b\}$.  The relation satisfied by this generator is of course $b^2 = 2b$.  Note that we have rearranged the relation so that all coefficients are positive integers.  Let $M = \C$ be the $B$-module with action given by $b \cdot z = 0$, $z \in M$, and let $N = \C$ be the $B$-module with action given by $b \cdot z = 2z$, $z \in N$.  Let $\cM=\C\md$ be the category of finite-dimensional $\C$-modules and define the functors $F,G \colon \mathcal{M} \to \mathcal{M}$ by
  \begin{gather*}
    F = 0, \quad \text{that is, } F(V) = 0, \quad \fa V \in \cM,\quad \text{and} \\
    G = \id_\cM \oplus \id_\cM, \quad \text{that is, } G(V) = V \oplus V, \quad \fa V \in \cM.
  \end{gather*}
  Define $\varphi \colon \mathcal{K}_0(\cM) \otimes_\Z \C \to M$ and $\psi \colon \mathcal{K}_0(\cM) \otimes_\Z \C \to N$ both by $z[\C] \mapsto z$ (where $[\C]$ here denotes the class of the simple one-dimensional $\C$-module).

  Since
  \begin{gather*}
    \varphi \circ [F] (z[\C]) = 0 = b \cdot \varphi(z [\C]),\quad \fa z \in \C,\quad \text{and} \\
    \psi \circ [G] (z[\C]) = \psi (z [G(\C)]) = \psi (z[\C \oplus \C]) = \psi(2z[\C]) = 2z = b \cdot z = b \cdot \psi (z[\C]),
  \end{gather*}
  we see that $(\cM,\varphi,F)$ and $(\cM,\psi,G)$ are naive categorifications of $(B,\{b\},M)$ and $(B,\{b\},N)$, respectively.  It is easy to verify (see Exercise~\ref{prob:Maz}) that we have isomorphisms of functors
  \begin{equation} \label{eq:Maz-isoms}
    F \circ F \cong F \oplus F,\quad G \circ G \cong G \oplus G.
  \end{equation}
  Thus, as operators on $\mathcal{K}_0(\cM)$, we have $[F]^2 = 2[F]$ and $[G]^2=2[G]$.  So the isomorphisms~\eqref{eq:Maz-isoms} lift the relation $b^2=2b$.  Therefore, $(\cM, \varphi, F)$ and $(\cM, \psi, G)$ are weak categorifications of $(B,\{b\},M)$ and $(B,\{b\},N)$, respectively.  Note that $B \cong \C[S_2]$ via the map $b \mapsto s_1+1$ and, under this isomorphism, the modules $M$ and $N$ become the sign and trivial $\C[S_2]$-modules, respectively.
\end{eg}

\iftoggle{solutions}{}{\newpage}
\Exercises

\medskip

\begin{prob} \label{prob:Maz}
  Verify that the functors $F$ and $G$ of Example~\ref{eg:Maz} satisfy~\eqref{eq:Maz-isoms}.
\end{prob}

\begin{prob}
  Suppose $R$ is a ring that is free as an abelian group, and let $\mathbf{r} = \{r_i\}_{i \in I}$ be a basis of $R$ such that the multiplication in this basis has nonnegative integer coefficients: $r_i r_j = \sum_k c_{ij}^k r_k$, $c_{ij}^k \in \N$.   Let $M$ be a left $R$-module with a basis $\mathbf{b} = \{b_j\}_{j \in J}$ such that $r_i b_j = \sum_k d_{ij}^k b_k$, $d_{ij}^k \in \N$.  By defining $\mathcal{M}$ to be an appropriate sum of copies of the category of finite-dimensional $\F$-vector spaces, show that one can always define a (rather trivial) weak categorification of $(R,\mathbf{r},M)$.
\end{prob}

\solution{
  See \cite[\S2.1, p.~482]{KMS09}.
}

%
\chapter{Categorification of the polynomial representation of the Weyl algebra}
\chaptermark{The Weyl algebra}
\pagestyle{headings}
%

In this chapter will discuss a categorification that is more sophisticated than the simple examples we have seen so far.  In particular, we will use the categories of modules over nilcoxeter algebras to categorify the polynomial representation of the Weyl algebra.  The action of the Weyl algebra will be categorified by induction and restriction functors.  This categorification was first carried out by Khovanov in~\cite{Kho01}.  We refer the reader to that reference for further details.  As before, we fix an arbitrary field $\F$ and assume that all modules are finitely generated.

%
\section{The Weyl algebra} \label{sec:Weyl-algebra}
%

The Weyl algebra is the algebra of differential operators with polynomial coefficients in one variable.  For our purposes, we define it as follows.

\begin{defin}[Weyl algebra]
  The \define{Weyl algebra} $W$ is the unital associative algebra over $\Z$ with generators $x, \partial$ and defining relation $\partial x = x \partial + 1$.
\end{defin}

Let $R_\Q = \Q[x]$ be the $\Q$-vector space spanned by $x^0, x^1, x^2,\dotsc$.  There is a natural action of $W$ on $\R_\Q$ given by
\[
  x \cdot x^n = x^{n+1},\quad \partial \cdot x^n = n x^{n-1},\quad \fa n=0,1,2,\dotsc.
\]
The abelian subgroups
\[
  R = \Span_\Z \{x^n/n!\}_{n=0}^\infty \quad \text{and} \quad R' = \Span_\Z \{x^n\}_{n=0}^\infty
\]
of $R_\Q$ are easily seen to be $W$-submodules of $R_\Q$.  The module $R_\Q$ is important for several reasons.  In particular, it is faithful and, as a $(W \otimes_\Z \Q)$-module, it is irreducible.  It is also the unique $(W \otimes_\Z \Q)$-module generated by an element (the element $x^0 \in R_\Q$) annihilated by $\partial$.  This can be proven directly or it can be seen to follow from a generalized Stone--von Neumann Theorem (see~\cite[Th.~2.11]{SY13}).

We define a $\Q$-valued symmetric bilinear form on $R_\Q$ by
\[
  \langle x^n, x^m \rangle = \delta_{n,m}\, n!.
\]
This form restricts to a $\Z$-valued perfect pairing $\langle -, - \rangle \colon R' \times R \to \Z$.

%
\section{The nilcoxeter algebra and its modules} \label{sec:nilcoxeter}
%

We first recall some basic facts about the symmetric group.  The group algebra $\F[S_n]$ of the symmetric group $S_n$ on $n$ letters is generated by the simple transpositions $s_i = (i,i+1)$.  A complete set of relations for these generators is
\begin{gather*}
  s_i^2 = 1 \text{ for } i=1,2,\dotsc,n-1, \\
  s_is_j = s_js_i \text{ for } i,j = 1,\dotsc,n-1 \text{ such that } |i-j|>1, \\
  s_is_{i+1}s_i = s_{i+1}s_is_{i+1} \text{ for } i=1,2,\dotsc,n-2.
\end{gather*}
(These are simply the relations for the transpositions in the symmetric group.)  The last two sets of relations are known as the \emph{braid relations}\index{braid relation}.  Any elements of $S_n$ can be written as a product
\begin{equation} \label{eq:Sn-element-product-transpositions}
  \sigma = s_{i_1} s_{i_2} \dotsm s_{i_k}.
\end{equation}
If $k$ is minimal among such expressions, then it is called the \emph{length}\index{length of a permutation} of $\sigma$ and is denoted $\ell(\sigma)$.  (The length of a permutation is also equal to the number of inversions it creates.)  Any expression~\eqref{eq:Sn-element-product-transpositions} of minimal length (i.e.\ where $k = \ell(\sigma)$) is called a \define{reduced expression} for $\sigma$.  Any reduced expression for $\sigma \in S_n$ can be obtained from any other reduced expression for $\sigma$ by a sequence of braid relations.  If~\eqref{eq:Sn-element-product-transpositions} is not a reduced expression, then one may use the braid relations to replace it by an expression in which two $s_j$ (for some $j=1,\dotsc,n-1$) appear immediately next to one another.  Using the relation $s_j^2=1$, the length of the expression can be reduced by two.  Continuing in this manner, one may obtain a reduced expression from any (potentially non-reduced) expression.  The element of $S_n$ mapping $i$ to $n-i+1$ (for $i=1,\dotsc,n$) is the unique maximal length element of $S_n$.  Its length is $n(n-1)/2$.

\begin{defin}[Nilcoxeter algebra]
  Fix a nonnegative integer $n$.  The \define{nilcoxeter algebra} $N_n$ is the unital $\F$-algebra generated by $u_1,\dotsc,u_{n-1}$ subject to the relations
  \begin{gather*}
    u_i^2 = 0 \text{ for } i=1,2,\dotsc,n-1, \\
    u_iu_j = u_ju_i \text{ for } i,j = 1,\dotsc,n-1 \text{ such that } |i-j|>1, \\
    u_iu_{i+1}u_i = u_{i+1}u_iu_{i+1} \text{ for } i=1,2,\dotsc,n-2.
  \end{gather*}
  By convention, we set $N_0=N_1=\F$.
\end{defin}

Note that the nilcoxeter algebra is quite similar to $\F[S_n]$.  The only difference is that the generators $u_i$ square to zero instead of one.  The proof of the following lemma is left as an exercise (Exercise~\ref{prob:nilcoxeter-nilpotence}).

\begin{lem} \label{lem:nilcoxeter-nilpotence}
  If $k > n(n-1)/2$, then $u_{i_1} u_{i_2} \dotsm u_{i_k} = 0$ for all $i_1,i_2,\dotsc,i_k \in \{1,\dotsc,n-1\}$.
\end{lem}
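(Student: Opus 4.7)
The plan is to exploit the close parallel between the defining relations of $N_n$ and the standard presentation of $S_n$ recalled just before the statement: the commutation relation and the braid relation are identical in the two algebras, and the only difference is that $u_i^2 = 0$ in $N_n$ while $s_i^2 = 1$ in $\F[S_n]$. In particular, any identity in $\F[S_n]$ that is obtained purely by applying commutation and braid relations (without ever invoking $s_i^2 = 1$) lifts verbatim to an identity in $N_n$ with $u_i$'s in place of $s_i$'s.

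With this observation in hand, I would first prove the following claim: if $s_{i_1} s_{i_2} \dotsm s_{i_k}$ is not a reduced expression in $S_n$, then $u_{i_1} u_{i_2} \dotsm u_{i_k} = 0$ in $N_n$. Indeed, the discussion preceding the lemma states that any non-reduced expression can, by a sequence of braid and commutation moves, be rewritten as an expression in which two consecutive generators $s_j s_j$ appear side by side. Applying the corresponding moves in $N_n$ (which are valid equalities there, as noted above) transforms $u_{i_1} \dotsm u_{i_k}$ into an expression of the form $\cdots u_j u_j \cdots$. Using the relation $u_j^2 = 0$, the entire product vanishes.

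Now the lemma follows immediately: the maximal length element of $S_n$ has length $n(n-1)/2$, so every reduced expression in $S_n$ has length at most $n(n-1)/2$. If $k > n(n-1)/2$, then $s_{i_1} s_{i_2} \dotsm s_{i_k}$ cannot be reduced, and so by the claim $u_{i_1} u_{i_2} \dotsm u_{i_k} = 0$.

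The only nontrivial step is the claim itself, whose crux is the appeal to the fact (quoted in the preceding exposition but not proved in the text) that any non-reduced expression in $S_n$ admits a sequence of braid and commutation moves that produces an explicit repetition $s_j s_j$. This is a standard result on Coxeter groups (the strong exchange / Matsumoto property specialized to the symmetric group), and it is the key input we are borrowing. Once one grants this, the proof is a direct translation between the two presentations, so I would present the argument in the order above and cite the exposition just before the lemma for the combinatorial fact about reduced expressions.
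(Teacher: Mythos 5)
Your proof is correct and follows essentially the same route as the paper's: since $k$ exceeds the length $n(n-1)/2$ of the longest element of $S_n$, the word cannot be reduced, so braid/commutation moves (valid in $N_n$) produce an adjacent repetition $u_j u_j$, which vanishes by $u_j^2=0$. Both arguments rest on the same quoted combinatorial fact about non-reduced expressions in $S_n$, so there is nothing to add.
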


Since the relations defining $N_n$ are homogenous in the $u_i$, we can define an $\N$-grading $N_n = \bigoplus_{m \in \N} N_n^{(m)}$ on $N_n$ (as an algebra, i.e., $N_n^{(m_1)} N_n^{(m_2)} \subseteq N_n^{(m_1+m_2)}$ for all $m_1,m_2 \in \N$) by setting the degree of $u_i$ to be one for $i=1,\dotsc,n-1$.  Let $I = \bigoplus_{m \ge 1} N_n^{(m)}$ be the sum of the (strictly) positively graded pieces of $N_n$.  In other words, $I$ is the ideal of $N_n$ generated by the $u_i$, $i=1,\dotsc,n-1$.  This is a maximal ideal of $N_n$ since $N_n/I$ is a one-dimensional $N_n$-module (spanned by the image of the unit of $N_n$) and hence simple.  It follows from Lemma~\ref{lem:nilcoxeter-nilpotence} that $I^k=0$ for $k > n(n-1)/2$.

\begin{prop} \label{prop:nilcoxeter-modules}
  The nilcoxeter algebra $N_n$ has a unique simple module, denoted $L_n$.  This is the one-dimensional module on which all $u_i$, $i=1,\dotsc,n-1$, act by zero.  The projective cover of $L_n$ is $N_n$.
\end{prop}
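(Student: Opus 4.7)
The plan is to show that $N_n$ is a local ring with unique maximal left ideal $I = \bigoplus_{m \geq 1} N_n^{(m)}$; both statements in the proposition then follow quickly.

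First I would verify that every element of $N_n \setminus I$ is a unit. The grading provides a vector space decomposition $N_n = \F \cdot 1 \oplus I$ (since $N_n^{(0)} = \F$), so any $b \notin I$ has the form $b = c + x$ with $c \in \F^\times$ and $x \in I$. By Lemma~\ref{lem:nilcoxeter-nilpotence} the ideal $I$ satisfies $I^k = 0$ for $k > n(n-1)/2$, so $x$ is nilpotent; a standard geometric-series argument then produces a two-sided inverse for $c + x$ in $N_n$.

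From this I conclude that $I$ is the unique maximal left ideal of $N_n$: any left ideal not contained in $I$ would contain a unit and hence be all of $N_n$. As reviewed in Section~\ref{sec:assoc-alg}, every simple $N_n$-module is a quotient of $N_n$ by a maximal left ideal, so the unique simple $N_n$-module (up to isomorphism) is $L_n := N_n/I$. Concretely $L_n$ is one-dimensional and each $u_i$ acts by zero, as already noted in the paragraph preceding the proposition.

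For the projective cover, $N_n$ is free of rank one over itself and hence projective, and the canonical quotient $p \colon N_n \twoheadrightarrow L_n$ has kernel $I$. To complete the proof I would check that $I$ is a superfluous submodule of $N_n$: if $H$ is a left ideal with $I + H = N_n$, then $1 = x + h$ for some $x \in I$ and $h \in H$, so $h = 1 - x$ is a unit by the argument above, and therefore $H \supseteq N_n h = N_n$. Thus $p$ is a superfluous epimorphism and $(N_n, p)$ is the projective cover of $L_n$. The only genuinely nontrivial step is the inversion of $c + x$ by nilpotence; everything else is formal bookkeeping built on that observation.
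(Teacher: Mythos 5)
Your proof is correct and rests on the same key input as the paper's: the nilpotence of $I$ supplied by Lemma~\ref{lem:nilcoxeter-nilpotence}, exploited via the finite geometric series inverting $1-x$ for $x \in I$. Indeed, your superfluity argument for the projective cover is word-for-word the paper's. The only divergence is in the first half: the paper argues directly that for a simple module $V$ the submodule $IV$ is either $V$ or $0$, rules out $IV=V$ because $I^k=0$, and concludes $V \cong N_n/I$; you instead first establish that $N_n$ is a local ring with unique maximal left ideal $I$ (using the decomposition $N_n = \F\cdot 1 \oplus I$) and then invoke the classification of simple modules as quotients by maximal left ideals from Section~\ref{sec:assoc-alg}. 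Both routes are sound and of comparable length; yours has the small advantage of making the local-ring structure of $N_n$ explicit, and as a by-product the unit criterion you prove there is exactly what is needed again in the superfluity step, so nothing is duplicated.
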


\begin{proof}
  Let $V$ be a simple $N_n$-module.  Then $IV$ is a submodule of $V$.  Thus, we must have $IV = V$ or $IV=0$.  If $IV = V$, then, for $k > n(n-1)/2$, we would have
  \[
    V = IV = I^2V = \dotsb = I^kV = 0,
  \]
  contradicting the fact that $V \ne 0$.  Thus $IV=0$ and so $V=L_n$.

  Since $N_n$ is a free (hence projective) $N_n$-module, to show that it is the projective cover of $L_n$, it suffices to show that the kernel $I$ of the map $N_n \to N_n/I$ is a superfluous module of $N_n$.  If $I + H = N_n$ for some submodule (i.e.\ ideal) $H$ of $N_n$, then $H$ must contain an element of the form $1 - a$, with $a \in I$.  This element is invertible, with inverse $1+a+a^2 + \dotsb + a^k$, where $k$ is any integer greater than $n(n-1)/2$.  Hence, $H=N_n$.  So $I$ is superfluous as desired.
\end{proof}

By Proposition~\ref{prop:nilcoxeter-modules}, we have
\[
  G_0(N_n) = \Z[L_n],\quad K_0(B) = \Z[N_n].
\]
Let\index{GN@$\cG_N$}\index{KN@$\cK_N$}
\begin{gather*}
  \cG_N = \cK_0\left( \bigoplus_{n=0}^\infty N_n\md \right) = \bigoplus_{n=0}^\infty G_0(N_n) = \bigoplus_{n=0}^\infty \Z[L_n], \quad \text{and} \\
  {\cK_N} = \cK_0\left( \bigoplus_{n=0}^\infty N_n\pmd \right) = \bigoplus_{n=0}^\infty K_0(N_n) = \bigoplus_{n=0}^\infty \Z[N_n].
\end{gather*}
We define a bilinear form $\langle -, - \rangle \colon \cG_N \otimes_\Z {\cK_N} \to \Z$ by declaring $G_0(N_n)$ to be orthogonal to $K_0(N_m)$ for $n \ne m$, and using the form~\eqref{eq:bilinear-form} when $n = m$.

Define isomorphisms of $\Z$-modules
\begin{gather*}
  \varphi_{\cG_N} \colon {\cG_N} \to R = \Span_\Z \{x^n/n!\}_{n=0}^\infty,\quad [L_n] \mapsto x^n/n!, \\
  \varphi_{\cK_N} \colon {\cK_N} \to R' = \Span_\Z \{x^n\}_{n=0}^\infty,\quad [N_n] \mapsto x^n.
\end{gather*}
Since
\[
  \langle x^m, x^n/n! \rangle = \delta_{m,n} = \langle [N_m], [L_n] \rangle,
\]
we see that the above maps respect the bilinear forms we have defined on the spaces involved.  In other words, we have
\[
  \langle a, b \rangle = \langle \varphi_{\cK_N}(a), \varphi_{\cG_N}(b) \rangle, \quad \fa a \in {\cK_N},\ b \in {\cG_N}.
\]
Our next goal is to categorify the action of the Weyl algebra $W$ on the modules $R$ and $R'$.

\Exercises

\medskip

\begin{prob} \label{prob:nilcoxeter-nilpotence}
  Prove Lemma~\ref{lem:nilcoxeter-nilpotence}.
\end{prob}

\solution{
  Since $k$ is greater than the length of the longest element of $S_n$, the expression $s_{i_1} s_{i_2} \dotsm s_{i_k}$ cannot be reduced.  Because the nilcoxeter generators satisfy the braid relations, we have, as in the symmetric group, $u_{i_1} \dotsm u_{i_k} = u_{j_1} \dotsm u_{j_k}$ for some $j_1,\dotsc,j_k \in \{1,\dotsc,n-1\}$ with $j_\ell = j_{\ell+1}$ for some $\ell \in \{1,\dotsc,n-2\}$.  Then $u_{j_\ell} u_{j_\ell+1}=0$ and the result follows.
}

%
\section{Weak categorification of the polynomial representation} \label{sec:Weyl-weak-cat}
%

We can view the nilcoxeter algebra $N_n$ naturally as the subalgebra of $N_{n+1}$ generated by $u_1,\dotsc,u_{n-1}$.  For each $n \in \N$, define
\[
  X_n = (N_{n+1})_{N_n},\quad D_n = {_{N_n}}N_{n+1}.
\]
In other words, $X_n$ is $N_{n+1}$, viewed as an $(N_{n+1},N_n)$-bimodule, and $D_n$ is $N_{n+1}$, viewed as an $(N_n,N_{n+1})$-bimodule.  Thus
\begin{gather*}
  \left( X_n \otimes_{N_n} - \right) = \Ind_{N_n}^{N_{n+1}} \colon N_n\md \to N_{n+1}\md, \\
  \left( D_n \otimes_{N_{n+1}} - \right) = \Res^{N_{n+1}}_{N_n} \colon N_{n+1}\md \to N_n\md.
\end{gather*}

For $\sigma \in S_n$, let $\sigma = s_{i_1} \dotsm s_{i_k}$ be a reduced expression and define
\[
  u_\sigma = u_{i_1} \dotsm u_{i_k}.
\]
It follows from the discussion in Section~\ref{sec:nilcoxeter} that $u_\sigma$ is independent of the reduced expression for $\sigma$.  It also follows that $\{u_\sigma\}_{\sigma \in S_n}$ is a basis for $N_n$ and the multiplication in this basis is given by
\[
  u_\sigma u_\tau =
  \begin{cases}
    u_{\sigma \tau} & \text{if } \ell(\sigma\tau) = \ell(\sigma) + \ell(\tau), \\
    0 & \text{otherwise}.
  \end{cases}
\]
\comments{Maybe I should have included this in the section that defined the nilcoxeter algebras.}

We would like the functors of tensoring with $D_n$ and $X_n$ to be exact and map projectives to projectives.  As we saw in Section~\ref{sec:functors}, we need the following lemma.

\begin{lem}[{\cite[Prop.~4]{Kho01}}] \label{lem:XD-projective}
  The bimodules $X_n$ and $D_n$ are both left and right projective for all $n \in \N$.
\end{lem}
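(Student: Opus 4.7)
The plan is to observe that two of the four module structures are trivial, then handle the remaining two by exhibiting an explicit free basis coming from minimal length coset representatives in the symmetric group.

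First, I would note that $X_n = N_{n+1}$ is free (hence projective) as a left $N_{n+1}$-module, since it is just $N_{n+1}$ acting on itself by left multiplication. Dually, $D_n$ is free, hence projective, as a right $N_{n+1}$-module. So the only content is to show that $X_n$ is projective as a right $N_n$-module and that $D_n$ is projective as a left $N_n$-module. By symmetry of the two arguments, I will focus on proving that $N_{n+1}$ is free as a left $N_n$-module; the right-module statement is analogous.

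The key combinatorial input is the standard fact that $S_{n+1}$ decomposes as a disjoint union of right cosets $S_n \tau_i$, where $\tau_0, \tau_1, \ldots, \tau_n$ are the minimal length representatives, namely $\tau_0 = 1$ and $\tau_i = s_n s_{n-1} \cdots s_{n-i+1}$ for $i \geq 1$. Minimality of each $\tau_i$ yields the length-additivity property $\ell(\rho \tau_i) = \ell(\rho) + \ell(\tau_i)$ for every $\rho \in S_n$. Translating this into the nilcoxeter algebra via the basis $\{u_\sigma\}_{\sigma \in S_{n+1}}$ and the multiplication rule
\[
  u_\sigma u_\tau = \begin{cases} u_{\sigma\tau} & \text{if } \ell(\sigma\tau) = \ell(\sigma) + \ell(\tau), \\ 0 & \text{otherwise}, \end{cases}
\]
I would conclude that for each $i$, the map $N_n \to N_{n+1}$ defined by $a \mapsto a u_{\tau_i}$ sends $u_\rho \mapsto u_{\rho \tau_i}$, so it is injective (its image lies in the span of a subset of the basis of $N_{n+1}$) and identifies $N_n$ with the left $N_n$-submodule $N_n u_{\tau_i}$.

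Assembling these submodules, I would show that $N_{n+1} = \bigoplus_{i=0}^n N_n u_{\tau_i}$ as a left $N_n$-module, since every $\sigma \in S_{n+1}$ factors uniquely as $\sigma = \rho \tau_i$ with $\ell(\sigma) = \ell(\rho) + \ell(\tau_i)$, so the basis $\{u_\sigma\}_{\sigma \in S_{n+1}}$ of $N_{n+1}$ is the disjoint union of the images of the bases $\{u_\rho\}_{\rho \in S_n}$ under the $n+1$ embeddings. Hence $D_n \cong N_n^{\oplus (n+1)}$ as a left $N_n$-module, which is free and therefore projective. The analogous argument with the minimal length representatives of the left cosets $S_{n+1}/S_n$ (namely $1, s_n, s_{n-1}s_n, \ldots, s_1 s_2 \cdots s_n$) shows that $X_n$ is free as a right $N_n$-module.

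The only subtle point, and hence the step I expect to take the most care with, is the length-additivity statement $\ell(\rho \tau_i) = \ell(\rho) + \ell(\tau_i)$: one must verify that the chosen $\tau_i$ are indeed the minimal length representatives, either by a direct inversion count (noting that $\tau_i$ sends $n+1$ to $n+1-i$ while acting as the identity on $\{1, \ldots, n-i\}$ in an order-preserving manner on the remaining entries) or by invoking the standard theory of parabolic subgroups of Coxeter groups. Once this is in hand, everything else is a routine transfer of a symmetric group coset decomposition to the nilcoxeter setting.
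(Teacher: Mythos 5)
Your proposal is correct and follows essentially the same route as the paper: the paper also dismisses the two rank-one free module structures immediately, then exhibits an explicit free basis for $X_n$ as a right $N_n$-module via the unique factorization $\sigma = s_i s_{i+1} \dotsm s_n \sigma'$ with $\sigma' \in S_n$ (i.e.\ minimal length coset representatives), declaring the $D_n$ case analogous. The only cosmetic difference is that you work out the $D_n$/left-module case in detail and you are more explicit about the length-additivity point that makes the transfer from $S_{n+1}$ to $N_{n+1}$ work.
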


\begin{proof}
  As a left $N_{n+1}$-module, $X_n$ is free of rank one, hence projective.  We next show that $X_n$ is right projective.   Let $\sigma \in S_{n+1}$ and set $i = \sigma(n+1)$.  Then $\sigma$ can be written uniquely in the form
  \[
    \sigma = s_i s_{i+1} \dotsm s_n \sigma',\quad \sigma' \in S_n
  \]
  (in particular, $\sigma' = s_n \dotsm s_{i+1} s_i \sigma$).  Thus, $X_n$ is a free right $N_n$-module with basis
  \[
    1,\ u_n,\ u_{n-1} u_n,\ \dotsc,\ u_1 u_2 \dotsm u_n.
  \]
  Hence $X_n$ is projective as a right $N_n$-module.  The argument that $D_n$ is left and right projective is analogous.
\end{proof}

\begin{cor}
  The functors $X_n \otimes_{N_n} -$ and $D_n \otimes_{N_{n+1}} -$ are exact and induce functors
  \begin{gather*}
    \left( X_n \otimes_{N_n} - \right) = \Ind_{N_n}^{N_{n+1}} \colon N_n\pmd \to N_{n+1}\pmd, \\
    \left( D_n \otimes_{N_{n+1}} - \right) = \Res^{N_{n+1}}_{N_n} \colon N_{n+1}\pmd \to N_n\pmd.
  \end{gather*}
\end{cor}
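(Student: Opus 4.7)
The corollary is essentially an immediate application of Lemma~\ref{lem:XD-projective} combined with the general facts about bimodule tensor product functors recorded at the end of Section~\ref{sec:functors}. So my plan is to unpack those two inputs and check that each half of the statement follows from them.

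First I would recall the two general principles. For an $(A,B)$-bimodule $M$, the functor $M \otimes_B - \colon B\md \to A\md$ is right exact, and it is left exact (hence exact) precisely when $M$ is projective (equivalently, flat, by Proposition~\ref{prop:fd-alg-modules-props}) as a right $B$-module. Moreover, by Exercise~\ref{prob:bimodule-functor-left-proj}, if $M$ is projective as a left $A$-module, then $M \otimes_B -$ sends projective $B$-modules to projective $A$-modules, so it restricts to a functor $B\pmd \to A\pmd$.

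Next I would apply these to $X_n$ and $D_n$. By Lemma~\ref{lem:XD-projective}, $X_n = (N_{n+1})_{N_n}$ is projective both as a left $N_{n+1}$-module and as a right $N_n$-module. Right $N_n$-projectivity gives exactness of $X_n \otimes_{N_n} -$; left $N_{n+1}$-projectivity, together with Exercise~\ref{prob:bimodule-functor-left-proj}, gives that this functor restricts to $N_n\pmd \to N_{n+1}\pmd$. The identification $X_n \otimes_{N_n} - = \Ind_{N_n}^{N_{n+1}}$ is then just the definition of induction given in Section~\ref{sec:functors}. The argument for $D_n \otimes_{N_{n+1}} - = \Res^{N_{n+1}}_{N_n}$ is entirely symmetric, using that $D_n$ is projective as a left $N_n$-module (for exactness) and as a right $N_{n+1}$-module (for preservation of projectives).

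There is no real obstacle here, since Lemma~\ref{lem:XD-projective} has already done the substantive work of exhibiting explicit bases that witness the two-sided projectivity of $X_n$ and $D_n$. The only thing worth being careful about is matching up the roles of ``left'' and ``right'' correctly: exactness of $M \otimes_B -$ uses right $B$-projectivity of $M$, while preservation of projectives uses left $A$-projectivity of $M$. Once those are lined up for each of $X_n$ and $D_n$, the corollary is immediate.
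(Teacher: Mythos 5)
Your proof is correct and is exactly the argument the paper intends: the corollary is stated without its own proof precisely because it follows from Lemma~\ref{lem:XD-projective} together with the general facts about bimodule tensor functors recorded in Section~\ref{sec:functors}. One small slip: in the $D_n$ case you have the roles reversed --- for the $(N_n,N_{n+1})$-bimodule $D_n$, exactness of $D_n \otimes_{N_{n+1}} -$ uses projectivity as a \emph{right} $N_{n+1}$-module, while preservation of projectives uses projectivity as a \emph{left} $N_n$-module (harmless here, since Lemma~\ref{lem:XD-projective} supplies both, but it contradicts the bookkeeping rule you yourself state in your final paragraph).
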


Let $N = \bigoplus_{n=0}^\infty  N_n$ (sum of algebras).  Then $N$ is an associative algebra.  However, it is no longer unital.  Instead, it has an infinite family $1_{N_n}$, $n \in \N$, of pairwise orthogonal idempotents.  Any $N_n$-module $M$, for $n \in \N$, is naturally an $N$-module.  Namely, we set $aM=0$ for all $a \in N_m$ with $m \ne n$.  In the same way, any $(N_n,N_m)$-bimodule, $m,n \in \N$, can be viewed as an $(N,N)$-bimodule.  Define the $(N,N)$-bimodules
\[
  X = \bigoplus_{n=0}^\infty X_n \quad \text{and} \quad D = \bigoplus_{n=0}^\infty D_n.
\]
(Since we do not refer to the algebra of dual numbers in this chapter, we hope there will be no confusion in our use of the same notation $D$.)

Let
\[
  \cN=\bigoplus_{n=0}^\infty N_n\md \quad \text{and} \quad \cN_\pj = \bigoplus_{n=0}^\infty N_n\pmd.
\]
Then $\cN$ and $\cN_\pj$ can be naturally viewed as full subcategories of the category of finite-dimensional $N$-modules.  If we define
\[
  \Ind = \bigoplus_{n=0}^\infty \Ind_{N_n}^{N_{n+1}} \quad \text{and} \quad
  \Res = \bigoplus_{n=0}^\infty \Res^{N_{n+1}}_{N_n},
\]
(as endofunctors of $\cN$ or $\cN_\pj$) then we have isomorphisms of functors
\[
  X \otimes_N - \cong \Ind \quad \text{and} \quad D \otimes_N - \cong \Res.
\]
Recall that ${\cG_N} = \cK_0(\cN)$ and ${\cK_N} = \cK_0(\cN_\pj)$.

\begin{prop} \label{prop:Weyl-naive-cat}
  The tuples $(\cN,\varphi_{\cG_N},\{\Ind,\Res\})$ and $(\cN_\pj,\varphi_{\cK_N},\{\Ind,\Res\})$ are naive categorifications of $(W,\{x,\partial\},R)$ and $(W,\{x,\partial\},R')$, respectively.
\end{prop}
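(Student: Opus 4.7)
The plan is to verify directly the commutativity of the categorification square in Definition~\ref{def:naive-cat} for each of the two generators $x,\partial \in W$ and for each of the two categorifications. Since $\cG_N$ and $\cK_N$ are free abelian groups on the explicit bases $\{[L_n]\}_{n\in\N}$ and $\{[N_n]\}_{n\in\N}$ respectively, it suffices to evaluate $[\Ind]$ and $[\Res]$ on these basis classes and then match the result, via $\varphi_{\cG_N}$ or $\varphi_{\cK_N}$, with the action of $x$ and $\partial$ on the distinguished basis of $R$ or $R'$.

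For the projective categorification the computation is essentially tautological. First, $\Ind N_n = X_n \otimes_{N_n} N_n \cong N_{n+1}$ as a left $N_{n+1}$-module, so $[\Ind][N_n] = [N_{n+1}]$, which corresponds under $\varphi_{\cK_N}$ to $x \cdot x^n = x^{n+1}$. Next, $\Res N_{n+1} \cong {_{N_n}}N_{n+1}$, and the proof of Lemma~\ref{lem:XD-projective} produced an explicit right $N_n$-basis of $N_{n+1}$ of cardinality $n+1$; by the symmetric argument this also gives a left $N_n$-basis of size $n+1$, so $[\Res][N_{n+1}] = (n+1)[N_n]$, which matches $\partial \cdot x^{n+1} = (n+1)x^n$.

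For the categorification on $\cN$, I would compute $[\Ind L_n]$ and $[\Res L_{n+1}]$ in $\cG_N$. Restriction is immediate: the generators $u_1,\dotsc,u_{n-1}$ of $N_n \subset N_{n+1}$ act by zero on the one-dimensional module $L_{n+1}$, so $\Res L_{n+1} \cong L_n$, and under $\varphi_{\cG_N}$ we get $[L_n] \mapsto x^n/n! = \partial(x^{n+1}/(n+1)!)$. For induction, the key input is Proposition~\ref{prop:nilcoxeter-modules}: since $L_{n+1}$ is the \emph{only} simple $N_{n+1}$-module, the Jordan--H\"older decomposition in $G_0(N_{n+1})$ of any finite-dimensional $N_{n+1}$-module $M$ reduces to $[M] = (\dim_\F M)[L_{n+1}]$. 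Because $N_{n+1}$ is free of rank $n+1$ as a right $N_n$-module (Lemma~\ref{lem:XD-projective}) and $\dim_\F L_n = 1$, we obtain $\dim_\F \Ind L_n = n+1$, hence $[\Ind][L_n] = (n+1)[L_{n+1}]$. Applying $\varphi_{\cG_N}$ yields $(n+1) \cdot x^{n+1}/(n+1)! = x \cdot x^n/n!$, as required.

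The only step that is not purely bookkeeping is the claim that every composition factor of $\Ind L_n$ must be $L_{n+1}$, which is where the structure of the nilcoxeter algebra is essential; but this is free once one invokes the uniqueness of the simple $N_{n+1}$-module from Proposition~\ref{prop:nilcoxeter-modules}. Combining the four bullets above gives the commutativity of both diagrams, and the additional requirement that $\Ind$ and $\Res$ be exact (resp.\ endofunctors of $\cN_\pj$) is precisely the content of the corollary to Lemma~\ref{lem:XD-projective}, so no further verification is needed.
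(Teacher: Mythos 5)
Your proposal is correct and follows essentially the same route as the paper's proof: compute $[\Ind]$ and $[\Res]$ on the basis classes $[L_n]$ and $[N_n]$ using the rank-$(n+1)$ freeness of $N_{n+1}$ over $N_n$ from Lemma~\ref{lem:XD-projective}, and use the uniqueness of the one-dimensional simple $N_{n+1}$-module to convert the dimension count $\dim_\F \Ind(L_n) = n+1$ into the identity $[\Ind(L_n)] = (n+1)[L_{n+1}]$ in the Grothendieck group. The only cosmetic difference is that you identify $\Res(L_{n+1}) \cong L_n$ by observing the generators act by zero, whereas the paper counts dimensions; these are trivially equivalent.
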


\begin{proof}
  We have already seen in Section~\ref{sec:nilcoxeter} that $\varphi_{\cG_N} \colon {\cG_N} \to R$ and $\varphi_{\cK_N} \colon {\cK_N} \to R'$ are $\Z$-module isomorphisms.  We saw in the proof of Lemma~\ref{lem:XD-projective} that $X_n$ is a free right $N_n$-module of rank $n+1$.  Thus, since $\dim L_n=1$, we have
  \[
    \dim_\F \Ind(L_n) = \dim_\F (X_n \otimes_{N_n} L_n) = n+1.
  \]
  Therefore, the composition series of $X_n \otimes_{N_n} L_n$ must have the unique simple (one-dimensional) $N_{n+1}$-module $L_{n+1}$ occur with multiplicity $n+1$.  So we have
  \[
    [\Ind(L_n)] = (n+1)[L_{n+1}] \in {\cG_N}.
  \]
  We also have (as left $N_{n+1}$-modules)
  \[
    \Ind(N_n) = X_n \otimes_{N_n} N_n = N_{n+1} \otimes_{N_n} N_n \cong N_{n+1},
  \]
  and so
  \[
    [\Ind(N_n)] = [N_{n+1}] \in {\cK_N}.
  \]
  Furthermore,
  \[
    \dim_\F \Res(L_{n+1}) = \dim_\F (D_n \otimes_{N_{n+1}} L_{n+1}) = \dim_\F (N_{n+1} \otimes_{N_{n+1}} L_{n+1}) = 1.
  \]
  Thus $\Res(L_{n+1}) = L_n$ and so
  \[
    [\Res(L_{n+1})] = [L_n] \in {\cG_N}.
  \]
  Finally, we have (as left $N_n$-modules)
  \[
    \Res(N_{n+1}) = D_n \otimes_{N_{n+1}} N_{n+1} = {_{N_n} N_{n+1}} \otimes_{N_{n+1}} N_{n+1} = {_{N_n} N_{n+1}} \cong N_n^{\oplus (n+1)},
  \]
  where the last isomorphism follows from the fact that $N_{n+1}$ is free of rank $n+1$ as a left $N_n$-module, as in the proof of Lemma~\ref{lem:XD-projective}.  Thus we have
  \[
    [\Res(N_{n+1})] = (n+1)[N_n] \in {\cK_N}.
  \]

  It follows from the above computations that we have, for all $n \in \N$,
  \begin{gather*}
    \varphi_{\cG_N} \circ [\Res] ([L_{n+1}]) = \varphi_{\cG_N}([L_n]) = x^n/n! = \partial \cdot x^{n+1}/(n+1)! = \partial \circ \varphi_{\cG_N} ([L_{n+1}]), \\
    \varphi_{\cG_N} \circ [\Ind] ([L_n]) = \varphi_{\cG_N}((n+1)[L_{n+1}]) = (n+1) x^{n+1}/(n+1)! = x \cdot x^n/n! = x \circ \varphi_{\cG_N}([L_n]), \\
    \varphi_{\cK_N} \circ [\Res] ([N_{n+1}]) = \varphi_{\cK_N}((n+1)[N_n]) = (n+1) x^n = \partial \cdot x^{n+1} = \partial \circ \varphi_{\cK_N} ([N_{n+1}]), \\
    \varphi_{\cK_N} \circ [\Ind] ([N_n]) = \varphi_{\cK_N} ([N_{n+1}]) = x^{n+1} = x \cdot x^n = x \circ \varphi_{\cK_N} ([N_n]).
  \end{gather*}
  In other words, we have the following commutative diagrams:
  \[
    \xymatrix{
      {\cG_N} \ar[r]^{[\Ind]} \ar[d]_{\varphi_{\cG_N}} & {\cG_N} \ar[d]^{\varphi_{\cG_N}} \\
      R \ar[r]^x & R
    }
    \qquad
    \xymatrix{
      {\cG_N} \ar[r]^{[\Res]} \ar[d]_{\varphi_{\cG_N}} & {\cG_N} \ar[d]^{\varphi_{\cG_N}} \\
      R \ar[r]^\partial & R
    }
    \qquad
    \xymatrix{
      {\cK_N} \ar[r]^{[\Ind]} \ar[d]_{\varphi_{\cK_N}} & {\cK_N} \ar[d]^{\varphi_{\cK_N}} \\
      R \ar[r]^x & R
    }
    \qquad
    \xymatrix{
      {\cK_N} \ar[r]^{[\Res]} \ar[d]_{\varphi_{\cK_N}} & {\cK_N} \ar[d]^{\varphi_{\cK_N}} \\
      R \ar[r]^\partial & R
    }
  \]
  The result follows.
\end{proof}

We would like to strengthen this naive categorification to a weak categorification.  For this, we need an isomorphism of functors lifting the defining relation of the Weyl algebra.

\begin{prop} \label{prop:nilcoxter-bimodule-isom}
  For each $n \in \N$, we have an isomorphism of $(N_n,N_n)$-bimodules
  \[
    D_{n+1} \otimes_{N_{n+1}} X_n \cong (X_{n-1} \otimes_{N_{n-1}} D_n) \oplus N_n,
  \]
  where $N_n$ is considered as an $(N_n,N_n)$-bimodule in the usual way (via left and right multiplication).  We thus have an isomorphism of $(N,N)$-bimodules
  \[
    D \otimes_N X \cong (X \otimes_N D) \oplus N.
  \]
\end{prop}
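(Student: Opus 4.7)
The plan is to realize both sides as explicit $(N_n,N_n)$-sub-bimodules of $N_{n+1}$, which I view as a bimodule via left and right multiplication using $N_n\subset N_{n+1}$. Under the usual identifications the tensor product $D_{n+1}\otimes_{N_{n+1}}X_n$ is simply $N_{n+1}$ carrying this restricted bimodule structure, and $X_{n-1}\otimes_{N_{n-1}}D_n$ is $N_n\otimes_{N_{n-1}}N_n$ with its evident $(N_n,N_n)$-bimodule structure. The task therefore reduces to producing a bimodule isomorphism
\[
  N_{n+1}\;\cong\;\bigl(N_n\otimes_{N_{n-1}}N_n\bigr)\oplus N_n.
\]

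I would first split $N_{n+1}$ along its basis $\{u_\sigma\}_{\sigma\in S_{n+1}}$. Set $A=\Span_\F\{u_\sigma:\sigma\in S_n\}$ and $B=\Span_\F\{u_\sigma:\sigma(n+1)\neq n+1\}$, so $N_{n+1}=A\oplus B$ as an $\F$-vector space. Both pieces are $(N_n,N_n)$-sub-bimodules: for $\tau\in S_n$ and $\sigma\in S_{n+1}\setminus S_n$, one has $(\tau\sigma)(n+1)=\tau(\sigma(n+1))\neq n+1$ since $\tau$ fixes $n+1$ and $\sigma(n+1)\le n$, so $u_\tau u_\sigma$ --- whether zero or equal to $u_{\tau\sigma}$ --- lies in $B$; the right-multiplication case is symmetric. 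Clearly $A\cong N_n$ as an $(N_n,N_n)$-bimodule.

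Next I would identify $B$ by constructing
\[
  \phi\colon N_n\otimes_{N_{n-1}}N_n\to B,\qquad a\otimes b\mapsto au_nb.
\]
Well-definedness reduces to the identity $\alpha u_n=u_n\alpha$ for all $\alpha\in N_{n-1}$, which follows from the far-commutation relation since $N_{n-1}$ is generated by $u_1,\dots,u_{n-2}$, each commuting with $u_n$. The map is manifestly an $(N_n,N_n)$-bimodule homomorphism, and its image lies in $B$ because each summand $u_\tau u_n u_\rho$ corresponds to a permutation $\tau s_n\rho$ sending $n+1$ to $\tau(n)\neq n+1$.

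For surjectivity I would apply the coset decomposition already established in the proof of Lemma~\ref{lem:XD-projective}: any $\sigma\in S_{n+1}$ with $\sigma(n+1)=i<n+1$ has the unique reduced expression $\sigma=s_is_{i+1}\dotsm s_n\sigma'$ with $\sigma'\in S_n$, giving $u_\sigma=\phi\bigl((u_i\dotsm u_{n-1})\otimes u_{\sigma'}\bigr)$. I would then finish by a dimension count: $\dim_\F B=(n+1)!-n!=n\cdot n!$, while $\dim_\F(N_n\otimes_{N_{n-1}}N_n)=n\cdot\dim_\F N_n=n\cdot n!$ because $N_n$ is free of rank $n$ as a right $N_{n-1}$-module (again by the coset argument). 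The main obstacle is the setup of $\phi$ --- its well-definedness and the check that its image lies in $B$ --- after which the dimension equality promotes the established surjection to an isomorphism, yielding the required bimodule decomposition.
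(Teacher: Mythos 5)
Your argument is correct and takes essentially the same route as the paper's proof: both identify the two tensor products with $N_{n+1}$ and $N_n \otimes_{N_{n-1}} N_n$, split $N_{n+1}$ according to whether $u_\sigma$ has $\sigma(n+1)=n+1$, and realize the second summand via $a \otimes b \mapsto a u_n b$ using the coset decomposition from Lemma~\ref{lem:XD-projective}. The only (welcome) divergence is that you obtain injectivity of this map from surjectivity plus the dimension count $n\cdot n!$, whereas the paper defers the well-definedness and injectivity of $m_2$ to Exercise~\ref{prob:m2-map}.
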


\begin{proof}
  We have isomorphisms of $(N_n,N_n)$-bimodules
  \[
    D_{n+1} \otimes_{N_{n+1}} X_n \cong {_{N_n} (N_{n+1})_{N_n}},\quad X_{n-1} \otimes_{N_{n-1}} D_n \cong N_n \otimes_{N_{n-1}} N_n.
  \]
  Let
  \[
    m_1 \colon N_n \hookrightarrow N_{n+1}
  \]
  be the natural inclusion of $(N_n,N_n)$-bimodules (i.e.\ uniquely determined by $1 \mapsto 1$).  We also have an injective homomorphism of $(N_n,N_n)$-bimodules
  \[
    m_2 \colon N_n \otimes_{N_{n-1}} N_n \hookrightarrow N_{n+1},\quad m_2(a \otimes b) = a u_n b,\ a,b \in N_n
  \]
  (see Exercise~\ref{prob:m2-map}).  For $\sigma \in S_{n+1}$, we have $u_\sigma \in m_1(N_n)$ if and only if $\sigma(n+1)=n+1$.  If $\sigma(n+1) \ne n+1$, then we can write $\sigma = \tau_1 s_n \tau_2$ for $\tau_1,\tau_2 \in S_n$.  Hence $u_\sigma \in m_2(N_n \otimes_{N_{n-1}} N_n)$.  Therefore, $m_1$ and $m_2$ define an $(N_n,N_n)$-bimodule homomorphism
  \[
    (N_n \otimes_{N_{n-1}} N_n) \oplus N_n \cong {_{N_n}(N_{n+1})}_{N_n}
  \]
  as desired.
\end{proof}

\begin{cor}
  We have isomorphisms of endofunctors of $N_n\md$ (hence also of $N_n\pmd$)
  \[
    \Res^{N_{n+1}}_{N_n} \circ \Ind_{N_n}^{N_{n+1}} \cong \left(\Ind_{N_{n-1}}^{N_n} \circ \Res^{N_n}_{N_{n-1}}\right) \oplus \id,
  \]
  and hence isomorphisms of endofunctors of ${\cG_N}$ (thus also of ${\cK_N}$)
  \begin{equation} \label{eq:Weyl-functor-isom}
    \Res \circ \Ind \cong (\Ind \circ \Res) \oplus \id.
  \end{equation}
\end{cor}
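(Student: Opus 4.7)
The plan is to lift the bimodule isomorphism of Proposition~\ref{prop:nilcoxter-bimodule-isom} to an isomorphism of functors by tensoring on the right. Recall from Section~\ref{sec:functors} that any homomorphism of $(A,B)$-bimodules $M \to M'$ induces a natural transformation $M \otimes_B - \to M' \otimes_B -$ (as established in Exercise~\ref{prob:bimodule-hom-natural-trans}), and that this assignment sends isomorphisms to natural isomorphisms. So the entire argument reduces to identifying the relevant compositions of functors with the appropriate tensor product functors.

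First I would observe that, by the associativity of the tensor product, there is a natural isomorphism of functors
\[
  \Res^{N_{n+1}}_{N_n} \circ \Ind_{N_n}^{N_{n+1}} = (D_{n+1} \otimes_{N_{n+1}} -) \circ (X_n \otimes_{N_n} -) \cong (D_{n+1} \otimes_{N_{n+1}} X_n) \otimes_{N_n} -,
\]
and similarly
\[
  \Ind_{N_{n-1}}^{N_n} \circ \Res^{N_n}_{N_{n-1}} \cong (X_{n-1} \otimes_{N_{n-1}} D_n) \otimes_{N_n} -.
\]
The identity endofunctor of $N_n\md$ is of course naturally isomorphic to $N_n \otimes_{N_n} -$, with $N_n$ viewed as an $(N_n,N_n)$-bimodule in the standard way.

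Next I would apply Proposition~\ref{prop:nilcoxter-bimodule-isom} to the bimodule $D_{n+1} \otimes_{N_{n+1}} X_n$, yielding an isomorphism of $(N_n,N_n)$-bimodules
\[
  D_{n+1} \otimes_{N_{n+1}} X_n \cong (X_{n-1} \otimes_{N_{n-1}} D_n) \oplus N_n.
\]
Tensoring over $N_n$ with this isomorphism, and using that tensor product distributes over direct sums (so that $(M \oplus M') \otimes_{N_n} - \cong (M \otimes_{N_n} -) \oplus (M' \otimes_{N_n} -)$ as functors), I obtain the claimed isomorphism of endofunctors of $N_n\md$. Since the induction and restriction functors restrict to functors on the projective subcategories (by Lemma~\ref{lem:XD-projective} and the corollary immediately following it), the same isomorphism holds on $N_n\pmd$.

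Finally, to obtain the isomorphism of endofunctors of $\cN$ and $\cN_\pj$, I would simply take the direct sum of the isomorphisms above over all $n \in \N$, using the definitions of $\Ind$, $\Res$, $X$, and $D$ as direct sums of their graded pieces. There is no serious obstacle here: the nontrivial content is entirely contained in Proposition~\ref{prop:nilcoxter-bimodule-isom}, and this corollary is a formal consequence obtained by tensoring and reassembling. The only thing to be slightly careful about is that the direct sum decomposition of bimodules produces a decomposition of functors, but this is immediate from the universal property of the tensor product.
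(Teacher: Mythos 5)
Your argument is correct and is essentially identical to the paper's proof: both identify each composite of induction and restriction with tensoring by the corresponding tensor product of bimodules, identify $\id$ with $N_n \otimes_{N_n} -$, and then apply the bimodule isomorphism of Proposition~\ref{prop:nilcoxter-bimodule-isom}. Your write-up simply makes explicit the routine facts (associativity of tensor product, distribution over direct sums, bimodule maps inducing natural transformations) that the paper leaves implicit.
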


\begin{proof}
  This follows from the fact that
  \begin{gather*}
    \left( D_{n+1} \otimes_{N_{n+1}} X_n \right) \otimes_{N_n} - \cong \Res^{N_{n+1}}_{N_n} \circ \Ind_{N_n}^{N_{n+1}}, \\
    (X_{n-1} \otimes_{N_{n-1}} D_n) \otimes_{N_n} - \cong \Ind_{N_{n-1}}^{N_n} \circ \Res^{N_n}_{N_{n-1}}, \\
    N_n \otimes_{N_n} - \cong \id. \qedhere
  \end{gather*}
\end{proof}

The isomorphism~\eqref{eq:Weyl-functor-isom} categorifies the defining relation $\partial x = x \partial + 1$.  Together with Proposition~\ref{prop:Weyl-naive-cat}, this shows that we have a weak categorification of the modules $R$ and $R'$ of the Weyl algebra $W$.

Since induction is left adjoint to restriction (see, for example, \cite[(2.19)]{CR81}), we have
\[
  \Hom_\cN(\Ind(P),M) \cong \Hom_\cN(P,\Res(M)), \quad \fa P \in \cN_\pj,\ M \in \cN.
\]
Thus
\[
  \langle [\Ind] (a), b \rangle = \langle a, [\Res] (b) \rangle, \quad \fa a \in {\cK_N},\ b \in {\cG_N}.
\]
So we have a categorification of the fact that $x$ is adjoint to $\partial$, that is,
\[
  \langle x \cdot f, g \rangle = \langle f, \partial \cdot g \rangle, \quad \fa f,g \in R_\Q.
\]
Note that one also has $\langle \partial \cdot f, g \rangle = \langle f, x \cdot g \rangle$ for all $f,g \in R_\Q$, and so one might guess that $\Ind$ is also right adjoint (hence biadjoint) to $\Res$.  In fact, this is not quite true.  However, $\Ind$ is \emph{twisted} right adjoint to $\Res$, and, in this case, this is enough to yield the right relationship on the level of Grothendieck groups.  We refer the reader to~\cite[\S2.4]{Kho01} for more details.  Biadjointness of functors is often a requirement for \emph{strong} categorification.  This is one reason why the weak categorification we have described in this chapter has not yet been lifted to a strong categorification.

\Exercises

\medskip

\begin{prob} \label{prob:m2-map}
  Show that the map $m_2$ in the proof of Proposition~\ref{prop:nilcoxter-bimodule-isom} is well defined.
\end{prob}

\begin{prob}
  Show that $\Ind$ is not right adjoint to $\Res$.
\end{prob}

\comments{Could add another section here on additional structure:
\begin{itemize}
  \item grading on $\Q[x]$ by degree corresponds to decomposition of $\cN$ into $N_n\md$'s.
  \item considering graded modules gives quantum Weyl group
\end{itemize}
}

%
\chapter{Categorification of the Fock space representation of the Heisenberg algebra} \label{ch:heisenberg}
\chaptermark{The Heisenberg algebra}
\pagestyle{headings}
%

We now turn our attention to the categories of modules over the group algebras of the symmetric groups.  (In fact, the results of this section go through for Hecke algebras at generic parameters, but we will stick to the more familiar setting of symmetric groups.)  We will see that these categories provide a categorification of the Heisenberg algebra and its Fock space representation.  We will see later that this can be lifted to a \emph{strong} categorification.  Due to time constraints, we will not work out all the explicit calculations for modules for the symmetric groups as we did for the nilcoxeter algebras.  Instead, we will state the results of these calculations and give references to the literature where the relevant details can be found.

%
\section{Symmetric functions} \label{sec:sym}
%

We begin by recalling some basic facts about symmetric functions.  We refer the reader to \cite[Ch.~I]{Mac95} for further details.  For $n \in \N$, let $\cP(n)$\index{Pn@$\cP(n)$} denote the set of partitions of $n$ and define $\cP = \bigcup_{n \in \N} \cP(n)$\index{P@$\cP$}.  Let $\Sy$\index{Sym@$\Sy$} denote the algebra of symmetric functions in countably many variables $x_1,x_2,\dotsc$ over $\Z$.  This is a graded algebra:
\[
  \Sy = \bigoplus_{n \in \N} \Sy_n,
\]
where $\Sy_n$ is the $\Z$-submodule of $\Sy$ consisting of homogeneous polynomials of degree $n$.  By convention, we set $\Sy_n = 0$ for $n < 0$.

For $\lambda \in \cP$, we define the \define{monomial symmetric function}
\[
  m_\lambda = \sum_{\alpha \text{ a rearrangement of } \lambda} x^\alpha,
\]
where $x^\alpha = x_1^{\alpha_1} x_2^{\alpha_2} \dotsm$ and we view $\lambda$ as a partition with infinitely many parts, all but finitely many of which are zero.  Then $\{m_\lambda\}_{\lambda \in \cP}$ is a $\Z$-basis for $\Sy$.  For $n \in \N$, we define the \define{complete symmetric function} $h_n$, the \define{elementary symmetric function} $e_n$, and the \define{power sum symmetric function} $p_n$ by
\[
  h_n = \sum_{\lambda \in \cP(n)} m_\lambda,\quad e_n = m_{(1^n)},\quad p_n = m_{(n)}.
\]
Then we define
\[
  h_\lambda = h_{\lambda_1} \dotsm h_{\lambda_\ell},\quad e_\lambda = e_{\lambda_1} \dotsm e_{\lambda_\ell},\quad p_\lambda = p_{\lambda_1} \dotsm p_{\lambda_\ell}, \quad \fa \lambda = (\lambda_1,\lambda_2,\dotsc,\lambda_\ell) \in \cP.
\]
Then $\{h_\lambda\}_{\lambda \in \cP}$ and $\{e_\lambda\}_{\lambda \in \cP}$ are $\Z$-bases for $\Sy$.  In other words, $\{h_n\}_{n \in \N_+}$ and $\{e_n\}_{n \in \N_+}$ are sets of polynomial generators for $\Sy$:
\[
  \Sy = \Z[h_1,h_2,\dotsc],\quad \Sy = \Z[e_1,e_2,\dotsc].
\]
On the other hand, $\{p_\lambda\}_{\lambda \in \cP}$ is only a $\Q$-basis for $\Sy \otimes_\Z \Q$.  That is,
\[
  \Sy \otimes_\Z \Q = \Q[p_1,p_2,\dotsc].
\]

We define an inner product $\langle -, - \rangle$ on $\Sy$ by declaring the monomial and complete symmetric functions\index{monomial symmetric function}\index{complete symmetric function} to be dual to each other:
\[
  \langle m_\lambda, h_\mu \rangle = \delta_{\lambda,\mu},\quad \fa \lambda, \mu \in \cP.
\]

The \define{Schur functions} are given by
\[
  s_\lambda = \det (h_{\lambda_i-i+j})_{1 \le i, j \le n},\quad \lambda \in \cP,
\]
where $n$ is greater than the length of the partition $\lambda$.  In particular, we have
\[
  s_{(n)} = h_n,\quad s_{(1^n)} = e_n,\quad \fa n \in \N_+.
\]
The Schur functions are self-dual:
\[
  \langle s_\lambda, s_\mu \rangle = \delta_{\lambda,\mu},\quad \fa \lambda, \mu \in \cP.
\]

\begin{defin}[Hopf algebra]
  A \define{Hopf algebra} over $\Z$ is a tuple $(B,\nabla,\eta,\Delta,\varepsilon,S)$ such that
  \begin{enumerate}[(a)]
    \item $B$ is a $\Z$-module;
    \item $\nabla \colon B \otimes B \to B$ (the \define{multiplication map}) and $\eta \colon \Z \to B$ (the \define{unit}) are $\Z$-linear maps making $(B,\nabla,\eta)$ a unital associative algebra;
    \item $\Delta \colon B \to B \otimes B$ (the \define{comultiplication map}) and $\varepsilon \colon B \to \Z$ (the \define{counit}) are $\Z$-linear maps making $(B,\Delta,\varepsilon)$ a counital coassociative coalgebra;
    \item $S \colon B \to B$ is a $\Z$-linear map (the \define{antipode});
    \item the maps $\nabla$, $\eta$, $\Delta$, $\varepsilon$, and $S$ satisfy certain compatibility conditions.  In particular, the coproduct $\Delta$ is an algebra homomorphism.
  \end{enumerate}
  We refer the reader to the many books on Hopf algebras for a more precise definition and further discussion.  If we omit the antipode from the definition, then we are left with the definition of a \define{bialgebra}.
\end{defin}

\begin{rem}
  The definition of a coalgebra is the dual of the definition of an algebra -- one simply reverses all the arrows in the definition (after stating the definition completely in terms of morphisms).  The structure of a Hopf algebra is precisely the structure one needs in order to define tensor products of modules and duals of modules.  In particular, if $M$ and $N$ are two modules for a Hopf algebra $B$, then the tensor product is a $B$-module via the composition
  \[
    B \xrightarrow{\Delta} B \otimes B \to (\End_\Z M) \otimes (\End_\Z N) \to \End_\Z (M \otimes N).
  \]
  Similarly, the antipode is used to define a $B$-module structure on the dual of a $B$-module.
\end{rem}

\begin{defin}[Graded connected Hopf algebra]
  We say that a bialgebra $H$ over $\Z$ is \emph{graded}\index{graded bialgebra}\index{bialgebra!graded} if $H = \bigoplus_{n \in \N} H_n$, where each $H_n$, $n \in \N$, is finitely generated and free as a $\Z$-module, and the following conditions are satisfied:
  \begin{gather*}
    \nabla(H_k \otimes H_\ell) \subseteq H_{k + \ell},\quad \Delta(H_k) \subseteq \bigoplus_{j=0}^k H_j \otimes H_{k-j},\quad k,\ell \in \N, \\
    \eta(\Z) \subseteq H_0,\quad \varepsilon(H_k) = 0 \text{ for } k \in \N_+.
  \end{gather*}
  We say that $H$ is \emph{graded connected}\index{graded connected bialgebra}\index{bialgebra!graded connected} if it is graded and $H_0 = \Z 1_H$.  A graded connected bialgebra is a Hopf algebra with invertible antipode (see, for example, \cite[p.~389, Cor.~5]{Hand08}) and thus we will also call such an object a \define{graded connected Hopf algebra}\index{Hopf algebra!graded connected}.
\end{defin}

The algebra $\Sy$ is in fact a graded connected Hopf algebra.  The coproduct $\Delta \colon \Sy \to \Sy \otimes_\Z \Sy$ is given on the elementary monomial and homogeneous symmetric functions by
\begin{equation} \label{eq:sym-coproduct}
  \Delta(e_n) = \sum_{i=0}^n e_i \otimes e_{n-i},\quad \Delta(h_n) = \sum_{i=0}^n h_i \otimes h_{n-i},\quad \fa n \in \N_+.
\end{equation}
Since the coproduct is an algebra homomorphism, this uniquely determines the coproduct.  The inner product on $\Sy$ is in fact a Hopf pairing of $\Sy$ with itself, in the sense that, for all $a,b,c \in \Sy$, we have
\begin{gather*}
  \langle ab, c \rangle = \langle \nabla(a \otimes b), c \rangle = \langle a \otimes b, \Delta(c) \rangle,\\
  \langle a, bc \rangle = \langle a, \nabla(b \otimes c) \rangle = \langle \Delta(a), b \otimes c \rangle,\\
  \langle a, 1 \rangle = \langle 1, a \rangle = \varepsilon(a).
\end{gather*}
Here we define the inner product on $\Sy \otimes \Sy$ by $\langle a \otimes b, c \otimes d \rangle = \langle a,c \rangle \langle b,d \rangle$.

%
\section[Categorification of the Hopf algebra $\Sy$]{Categorification of the Hopf algebra of symmetric functions} \label{sec:sym-cat}
%

For $n \in \N$, let $S_n$ denote the symmetric group\index{symmetric group} on $n$ letters and let $A_n = \C[S_n]$\index{An@$A_n$} be the corresponding group algebra.  By convention, we set $A_0=A_1=\C$.  By Maschke's Theorem (Lemma~\ref{lem:Maschke}), each $A_n$ is semisimple.  It is well-known (see, for example, \cite[\S7.2, Prop.~1]{Ful97}) that the irreducible representations of $S_n$ are enumerated by the set $\cP(n)$.  To each $\lambda \in \cP(n)$ corresponds the \define{Specht module} $S^\lambda$.  We define
\[
  E_n = S^{(1^n)},\quad L_n = S^{(n)},\quad \fa n \in \N_+.
\]
Then $E_n$ is the sign representation\index{sign representation} of $S_n$ and $L_n$ is the trivial representation\index{trivial representation}.

We have natural inclusions
\[
  S_m \times S_n \hookrightarrow S_{m+n},\quad \fa m,n \in \N,
\]
by letting $S_m$ permute the first $m$ letters and $S_n$ permute the last $n$ letters.  (Here we adopt the convention that $S_0$ and $S_1$ are the trivial groups.)  This gives rise to an injective algebra homomorphism
\[
  A_m \otimes A_n \hookrightarrow A_{m+n},\quad \fa m,n \in \N.
\]
(All tensor products in this section will be over $\Z$ unless otherwise indicated.)  This injection endows $A_{m+n}$ with the structure of a left and right $(A_m \otimes A_n)$-module.

\begin{lem} \label{lem:Sn-tower-projective}
  For all $m,n \in \N$, we have that $A_{m+n}$ is a two-sided projective $(A_m \otimes A_n)$-module.
\end{lem}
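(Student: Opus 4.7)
The plan is to derive the lemma as an immediate consequence of semisimplicity, rather than by any explicit construction of bases analogous to what was done for the nilcoxeter case. First I would observe that, because tensor products of group algebras over the base field are group algebras of direct products, we have $A_m \otimes A_n \cong \C[S_m \times S_n]$ as unital associative $\C$-algebras. Since $S_m \times S_n$ is a finite group and $\C$ has characteristic zero (in particular not dividing $|S_m \times S_n|$), Maschke's Theorem (Lemma~\ref{lem:Maschke}) implies that $A_m \otimes A_n$ is semisimple.

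Next I would invoke Proposition~\ref{prop:semisimple-module-props}: over a semisimple finite-dimensional unital associative $\C$-algebra, every finitely generated module is projective. Applying this with $B = A_m \otimes A_n$ to the $(A_m \otimes A_n)$-module $A_{m+n}$ (viewed as a left module via the algebra embedding $A_m \otimes A_n \hookrightarrow A_{m+n}$) gives projectivity as a left module. The identical argument applied to the right module structure yields projectivity as a right module. Together these give the two-sided projectivity claimed.

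Since the argument goes entirely through Maschke and the semisimplicity proposition, there is essentially no obstacle: the heavy lifting was already done in Chapter~1. The one point worth being careful about is making explicit that ``two-sided projective'' in the statement refers to projectivity on each side separately (as used in Section~\ref{sec:functors} to ensure that induction and restriction are exact and send projectives to projectives), rather than projectivity as a bimodule; both sides are handled symmetrically by the above argument. It is worth noting that this is where the $\C$-linear setting (or more generally semisimple Hecke algebras at generic parameters) is crucial, and is precisely what makes the symmetric group case cleaner than the nilcoxeter case, where Lemma~\ref{lem:XD-projective} required an explicit basis calculation because $N_n$ is not semisimple.
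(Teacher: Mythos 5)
Your proof is correct and follows essentially the same route as the paper, whose proof simply says the result ``follows immediately from the fact that $A_\ell$ is a semisimple algebra''; you have merely filled in the details (identifying $A_m \otimes A_n$ with $\C[S_m \times S_n]$, invoking Maschke's Theorem, and applying Proposition~\ref{prop:semisimple-module-props} to each side). Nothing further is needed.
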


\begin{proof}
  This follows immediately from the fact that $A_\ell$ is a semisimple algebra for all $\ell \in \N$.
\end{proof}

\begin{rem}
  The sum
  \[
    A = \bigoplus_{n \in \N} A_n
  \]
  is a \define{tower of algebras}.  We refer the reader to~\cite[\S3.1]{BL09} or~\cite[Def.~3.1]{SY13} for the precise definition of a tower of algebras.  The definitions in these two references are slightly different, but both apply to our situation.
\end{rem}

Define\index{A@$\cA$}\index{GA@$\cG_A$}
\[
  \cA = \bigoplus_{n \in \N} A_n\md,\quad \cG_A = \bigoplus_{n \in \N} G_0(A_n).
\]
Then $\cG_A$ has a basis given by the classes of the Specht modules:
\[
  \cG_A = \bigoplus_{\lambda \in \cP} \Z[S^\lambda].
\]
Since $A_n$ is semsimple, we have $G_0(A_n) = K_0(A_n)$ for all $n \in \N$.  Thus~\eqref{eq:bilinear-form} defines a bilinear form $\langle -,- \rangle \colon \cG_A \otimes \cG_A \to \Z$.

For $r \in \N_+$, define
\[
  \cA^{\otimes r} = \bigoplus_{n_1,\dotsc,n_r \in \N} (A_{n_1} \otimes \dotsc \otimes A_{n_r})\md.
\]
We then have the following functors:
\begin{gather}
  \nabla \colon \cA^{\otimes 2} \to \cA,\quad \nabla|_{(A_m \otimes A_n)\md} = \Ind^{A_{m+n}}_{A_m \otimes A_n}, \nonumber \\
  \Delta \colon \cA \to \cA^{\otimes 2},\quad \Delta|_{A_n\md} = \bigoplus_{k + \ell = n} \Res^{A_n}_{A_k \otimes A_\ell}, \nonumber \\
  \eta \colon \Vect_\C \to \cA,\quad \eta(V) = V \in A_0\md \text{ for } V \in \Vect_\C, \label{eq:Sn-Hopf-functors} \\
  \varepsilon \colon \cA \to \Vect_\C,\quad \varepsilon(V) =
  \begin{cases}
    V & \text{if } V \in A_0\md, \\
    0 & \text{otherwise}.
  \end{cases} \nonumber
\end{gather}
Here we have identified $A_0\md$ with the category $\Vect_\C$\index{Vect@$\Vect$} of finite-dimensional complex vector spaces.  Since
\[
  \Ind_{A_m \otimes A_n}^{A_{m+n}} = A_{m+n} \otimes_{A_m \otimes A_n} - \quad \text{and} \quad \Res^{A_{m+n}}_{A_m \otimes A_n} = {_{A_m \otimes A_n} A_{m+n}} \otimes_{A_{m+n}} -,
\]
it follows from Lemma~\ref{lem:Sn-tower-projective} that the above functors are exact, and so we have induced maps
\begin{equation} \label{eq:Sn-tower-Hopf-structure}
  \nabla \colon \cG_A \otimes \cG_A \to \cG_A,\quad \Delta \colon \cG_A \to \cG_A \otimes \cG_A,\quad \eta \colon \Z \to \cG_A,\quad \varepsilon \colon \cG_A \to \Z,
\end{equation}
where we have used the fact that $\cK_0(\Vect_\C) \cong \Z$ (see Example~\ref{eg:Fmd-split-Groth-group}) and that $G_0(A_m \otimes A_n) \cong G_0(A_m) \otimes G_0(A_n)$ (see Exercise~\ref{prob:Groth-of-product}).

\begin{prop} \label{prop:Sym-Hopf-categorification}
  The maps~\eqref{eq:Sn-tower-Hopf-structure} endow $\cG_A$ with the structure of a graded connected Hopf algebra.  Furthermore, the $\Z$-linear map
  \[
    \varphi_A \colon \cG_A \to \Sy,\quad [S^\lambda] \mapsto s_\lambda,\quad \fa \lambda \in \cP,
  \]
  is an isomorphism of Hopf algebras.  Under this map we have
  \[
    [E_n] \mapsto e_n,\quad [L_n] \mapsto h_n,\quad \fa n \in \N_+.
  \]
  Furthermore, we have
  \[
    \langle a,b \rangle = \langle \varphi_A(a), \varphi_A(b) \rangle,\quad \fa a,b \in \cG_A.
  \]
\end{prop}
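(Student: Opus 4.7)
The plan is to recognize that $\varphi_A$ is essentially the classical Frobenius characteristic map, and to decompose the proof into: (a) verifying that the four functors in~\eqref{eq:Sn-Hopf-functors} descend to a graded connected Hopf algebra structure on $\cG_A$, (b) checking $\varphi_A$ is a $\Z$-module isomorphism that intertwines the relevant structure maps, and (c) handling the pairing.

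For (a), the grading $\cG_A = \bigoplus_n G_0(A_n)$ is forced by the definitions, and the compatibility of $\nabla$ and $\Delta$ with this grading is immediate from~\eqref{eq:Sn-Hopf-functors}. Associativity of $\nabla$ follows from the transitivity isomorphism $\Ind^{A_{m+n+p}}_{A_{m+n} \otimes A_p} \circ (\Ind^{A_{m+n}}_{A_m \otimes A_n} \boxtimes \id) \cong \Ind^{A_{m+n+p}}_{A_m \otimes A_n \otimes A_p}$ of induction, and coassociativity of $\Delta$ is the analogous statement for restriction. The unit/counit identification of $A_0\md$ with $\Vect_\C$ makes the unit and counit axioms routine. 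The grading is connected because $G_0(A_0) \cong \Z$, so the existence and invertibility of the antipode is automatic from the cited result. The one substantive point is the bialgebra compatibility, i.e.\ that $\Delta \circ \nabla$ and $(\nabla \otimes \nabla) \circ (\id \otimes \tau \otimes \id) \circ (\Delta \otimes \Delta)$ agree on $\cG_A$. On the level of functors this is an instance of the Mackey decomposition theorem applied to the double cosets $(S_m \times S_n) \backslash S_{m+n} / (S_k \times S_\ell)$ with $m+n = k+\ell$; because each $A_r$ is semisimple, exactness is free and the isomorphism of Mackey descends to the desired identity in $\cG_A$. This step is the main technical obstacle.

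For (b), the fact that $\varphi_A$ is a $\Z$-module isomorphism is clear since $\{[S^\lambda]\}_{\lambda \in \cP}$ and $\{s_\lambda\}_{\lambda \in \cP}$ are $\Z$-bases. To show $\varphi_A$ is an algebra map, I will invoke the classical Littlewood--Richardson rule in two equivalent forms: the decomposition $\Ind^{A_{m+n}}_{A_m \otimes A_n}(S^\mu \boxtimes S^\nu) \cong \bigoplus_\lambda (S^\lambda)^{\oplus c^\lambda_{\mu\nu}}$ matches $s_\mu s_\nu = \sum_\lambda c^\lambda_{\mu\nu} s_\lambda$. Compatibility with coproduct then follows either by a parallel branching argument for restriction, or more cheaply by Frobenius reciprocity together with the Hopf pairing: if $\varphi_A$ preserves the product and the pairing, it automatically preserves the coproduct since $\Delta$ is determined by the product via $\langle \Delta(a), b \otimes c \rangle = \langle a, bc \rangle$. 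The identifications $[E_n] \mapsto e_n$ and $[L_n] \mapsto h_n$ are then the special cases $s_{(1^n)} = e_n$ and $s_{(n)} = h_n$ of the Schur identification.

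For (c), I only need $\langle [S^\lambda], [S^\mu] \rangle = \delta_{\lambda,\mu}$ to match the self-duality of the Schur basis. When $|\lambda| \ne |\mu|$ the Specht modules live over different algebras, so the pairing is zero by definition of the direct sum pairing on $\cG_A$. When $|\lambda| = |\mu| = n$, the definition~\eqref{eq:bilinear-form} gives $\dim_\C \Hom_{A_n}(S^\lambda, S^\mu)$, which by Schur's lemma (applicable since $\C$ is algebraically closed and the Specht modules are the absolutely simple $A_n$-modules) equals $\delta_{\lambda,\mu}$. This matches $\langle s_\lambda, s_\mu \rangle = \delta_{\lambda,\mu}$ recorded in Section~\ref{sec:sym}, completing the proof.
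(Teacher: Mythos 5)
Your proposal is correct, and it is considerably more self-contained than what the notes actually do: the text explicitly declines to give a full proof, citing \cite[Th.~7.3]{Ful97} for the fact that $\varphi_A$ is an algebra isomorphism, remarking that the coalgebra statement ``follows from a duality argument,'' and then only working out the single illustrative computation $\Delta(E_n) \cong \bigoplus_{i=0}^n E_i \otimes E_{n-i}$ (and likewise for $L_n$), which categorifies~\eqref{eq:sym-coproduct}. Your outline fills in exactly the pieces the notes delegate: you identify the Mackey decomposition over the double cosets $(S_k \times S_\ell)\backslash S_{m+n}/(S_m \times S_n)$ as the substance behind the bialgebra compatibility $\Delta \circ \nabla = (\nabla \otimes \nabla)\circ(\id\otimes\tau\otimes\id)\circ(\Delta\otimes\Delta)$ (the one axiom that is genuinely nontrivial, since associativity and coassociativity are just transitivity of induction and restriction), you make the ``duality argument'' for the coproduct precise --- it rests on Frobenius reciprocity giving the Hopf-pairing axiom $\langle \Delta(a), b\otimes c\rangle = \langle a, \nabla(b\otimes c)\rangle$ on $\cG_A$, plus nondegeneracy --- and you correctly reduce the algebra statement to the Littlewood--Richardson rule in its two guises. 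The pairing computation via Schur's lemma over $\C$ and the identifications $[E_n]\mapsto e_n$, $[L_n]\mapsto h_n$ via $E_n = S^{(1^n)}$, $L_n = S^{(n)}$ and $s_{(1^n)}=e_n$, $s_{(n)}=h_n$ are exactly right. What your route buys is a proof whose only external inputs are Mackey, Frobenius reciprocity, and Littlewood--Richardson; what the notes' route buys is brevity, by outsourcing the Frobenius characteristic isomorphism wholesale to the literature. The one place you should be slightly more careful in a written-up version is the duality step: state explicitly that you are using nondegeneracy of the pairing on each graded piece of $\Sy$ (equivalently, that the Schur functions are an orthonormal basis) to conclude $\Delta_{\Sy}\circ\varphi_A = (\varphi_A\otimes\varphi_A)\circ\Delta_{\cG_A}$ from the chain of pairing identities, since without nondegeneracy the argument would only show the two coproducts pair equally against everything.
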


\begin{rem}
  Proposition~\ref{prop:Sym-Hopf-categorification} can be summarized as the statement that the categories of modules over group algebras of symmetric groups, together with the functors~\eqref{eq:Sn-Hopf-functors}, give a categorification of $\Sy$ as a Hopf algebra.  This categorification gives a distinguished basis, namely the Schur functions.  This result was known even before the word categorification entered the lexicon.  The interested reader can find the details of the proof that $\varphi_A$ is an isomorphism of algebras, for instance, in \cite[Th.~7.3]{Ful97}.  The fact that it is also an isomorphism of coalgebras then follows from a duality argument.
\end{rem}

While we will not give a full proof of Proposition~\ref{prop:Sym-Hopf-categorification} in these notes, let us at least see the categorification of the equations~\eqref{eq:sym-coproduct}.  Consider the sign representation $E_n$ of $S_n$.  So $E_n$ is the one-dimensional representation of $S_n$ such that each simple transposition acts as $-1$.  It follows that, for $i=0,\dotsc,n$,
\[
  \Res^{A_n}_{A_i \otimes A_{n-i}} (E_n) = {_{A_i \otimes A_{n-i}} A_n} \otimes_{A_n} E_n
\]
is a one-dimensional $(A_i \otimes A_{n-i})$-module, and each simple transposition of $A_i$ and $A_{n-i}$ acts as $-1$.  Thus, $\Res^{A_n}_{A_i \otimes A_{n-i}} \cong E_i \otimes E_{n-i}$.  Therefore,
\[
  \Delta(E_n) = \bigoplus_{i=0}^n \Res^{A_n}_{A_i \otimes A_{n-i}} (E_n) = \bigoplus_{i=0}^n E_i \otimes E_{n-i}.
\]
This is a categorification of the equation $\Delta(e_n) = \sum_{i=0}^n e_i \otimes e_{n-i}$.  A similar argument shows that
\[
  \Delta(L_n) = \bigoplus_{i=0}^n L_i \otimes L_{n-i},
\]
which categorifies the equation $\Delta(h_n) = \sum_{i=0}^n h_i \otimes h_{n-i}$.

\begin{rem}
  The category $\cA$ is, in some ways, simpler than the category $\cN$ used in the categorification of the polynomial representation of the Weyl algebra, in the sense that $\cA$ is a semisimple category, whereas $\cN$ is not.  On the other hand, there is only one simple $N_n$-module for $n \in \N$, which made explicit computations easier.
\end{rem}

\Exercises

\medskip

\begin{prob} \label{prob:Groth-of-product}
  Suppose that $B_1$ and $B_2$ are finite-dimensional unital associative algebras over a field $\F$.  Show that $G_0(B_1 \otimes_\F B_2) \cong G_0(B_1) \otimes_\Z G_0(B_2)$ and $K_0(B_1 \otimes_\F B_2) \cong K_0(B_1) \otimes_\Z K_0(B_2)$ as $\Z$-modules.
\end{prob}

%
\section{The Heisenberg algebra and its Fock space representation}
%

The (infinite-dimensional) \define{Heisenberg algebra} $\fh_\Q$ is usually defined to be the unital associative algebra over $\Q$ (one can also work over $\R$ or $\C$) generated by $\{p_n,q_n\}_{n \in \N_+}$, with defining relations
\begin{equation} \label{eq:fh-usual-presentation}
  p_mp_n=p_np_m,\quad q_m,q_n=q_nq_m,\quad q_mp_n = p_nq_m + n \delta_{m,n},\quad \fa m,n \in \N_+.
\end{equation}
However, it turns out that this presentation is not very well suited to categorification.  The reason is essentially that the generators $p_n$, $n \in \N_+$, will correspond to power sum symmetric functions, and these do not generate $\Sy$ (although they do generate $\Sy \otimes_\Z \Q$ over $\Q$, as noted in Section~\ref{sec:sym}).  Thus, we need to come up with an ``integral version'' of the Heisenberg algebra.

For any $f \in \Sy$, we have the operator $\Sy \to \Sy$ given by multiplication by $f$.  We will denote this operator again by $f$.  The adjoint $f^*$ of the operator $f$ defines a map $\Sy^* \to \Sy^*$.  However, via the inner product on $\Sy$, we may identify $\Sy^*$ with $\Sy$.  Thus, we can view $f^*$ as an operator on $\Sy$.  More explicitly, $f^* \colon \Sy \to \Sy$ is defined by the condition
\[
  \langle a, f^*(b) \rangle = \langle fa, b \rangle, \quad \fa a,b \in \Sy.
\]
Since the inner product is nondegenerate, the above condition uniquely determines $f^*(b)$.

\begin{defin}[Heisenberg algebra and its Fock space representation]
  We define the (integral) \define{Heisenberg algebra} $\fh$ to be the subalgebra of $\End_\Z (\Sy)$ generated by left multiplication by elements of $\Sy$, together with the operators $f^*$ for all $f \in \Sy$.  The algebra $\fh$ acts naturally on $\Sy$.  We call this the \define{Fock space} representation of $\fh$ and denote it by $\cF$.
\end{defin}

The representation $\cF$ has obvious submodules, namely $n \cF$ for $n \in \Z$.  However, we have the following result characterizing $\cF$.

\begin{prop}[Stone--von Neumann Theorem] \label{prop:Stone-von-Neumann}
  The Fock space representation $\cF$ of $\fh$ is faithful and $\cF \otimes_\Z \Q$ is irreducible as an $\fh_\Q$-module. Any $\fh$-module generated by a nonzero element $v$ with $\Z v \cong \Z$ (as $\Z$-modules) and $\Sy^*(v) = 0$ is isomorphic to $\cF$.
\end{prop}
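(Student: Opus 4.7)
The first assertion is immediate: $\fh$ is constructed as a subalgebra of $\End_\Z(\cF)$, so its defining action on $\cF$ is tautologically faithful. For irreducibility of $\cF \otimes_\Z \Q = \Sy \otimes_\Z \Q$, I would use that this ring equals $\Q[p_1,p_2,\dotsc]$ together with the identity $p_n^* = n\,\partial/\partial p_n$, which follows by pairing both sides against monomials in the $p_\nu$'s and using $\langle p_\lambda, p_\mu\rangle = \delta_{\lambda,\mu} z_\lambda$. Given any nonzero $u$ in an $\fh_\Q$-submodule $M \subseteq \cF_\Q$, apply a monomial in the $p_n^*$'s tailored to a chosen leading $p$-multidegree of $u$ to extract a nonzero rational scalar in $M$, and then multiply by arbitrary $p_\lambda$'s to exhaust $\cF_\Q$.

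For the uniqueness statement, let $(V,v)$ satisfy the hypotheses, interpreting $\Sy^*(v)=0$ as $f^*(v)=0$ for all $f \in \Sy_+$. Using the embedding $\Sy \hookrightarrow \fh$ by multiplication operators, define $\varphi \colon \cF \to V$ by $\varphi(f) = f \cdot v$, which is automatically $\Sy$-linear with $\varphi(1) = v$. The key algebraic input is the Hopf--Leibniz identity
\[
  g^*(fh) = \sum_{(g)} g_{(1)}^*(f)\cdot g_{(2)}^*(h), \qquad f,g,h \in \Sy,
\]
which I would derive directly from the Hopf-pairing axioms of Section~\ref{sec:sym} by expanding $\langle a, g^*(fh)\rangle = \langle ga, fh\rangle = \langle \Delta(g)\Delta(a), f \otimes h\rangle$ and reassembling via the pairing in reverse. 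Equivalently, this gives the integral relation $g^* f = \sum_{(g)} g_{(1)}^*(f)\, g_{(2)}^*$ as operators in $\fh$. Applying it to $v$ and using $g^*(v) = \varepsilon(g) v$ (immediate from $g = \varepsilon(g)\cdot 1 + g_+$ with $g_+ \in \Sy_+$ and the hypothesis), only the counit components of $\Delta(g)$ survive, yielding $g^* f \cdot v = g^*(f)\cdot v = \varphi(g^*(f))$; hence $\varphi$ is $\fh$-equivariant. Iterating the same normal-ordering move expresses every $T \in \fh$ as a sum $\sum_i f_i g_i^*$ with $f_i,g_i \in \Sy$, so $T \cdot v = \sum_i \varepsilon(g_i)\, f_i \cdot v \in \Sy \cdot v = \varphi(\cF)$; since $V = \fh \cdot v$, $\varphi$ is surjective.

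For injectivity, tensor with $\Q$: the map $\varphi_\Q \colon \cF_\Q \to V_\Q$ sends $1$ to $v \otimes 1 \ne 0$ (since $\Z v \cong \Z$), so irreducibility of $\cF_\Q$ forces $\varphi_\Q$ to be injective; hence $\ker\varphi$ is $\Z$-torsion, and since $\cF = \Sy$ is a free $\Z$-module, $\ker\varphi = 0$. I expect the main obstacle to be verifying the commutation relation $g^* f = \sum g_{(1)}^*(f)\, g_{(2)}^*$ integrally: over $\Q$ it reduces to the familiar Heisenberg relations among the $p_n$ and $p_n^*$ (via the primitivity $\Delta(p_n) = p_n \otimes 1 + 1 \otimes p_n$), but to avoid the non-integral basis $\{p_\lambda\}$ one must extract it from the Hopf-pairing axioms and rely on the $\Z$-valuedness of $\Delta$ on $\Sy$ throughout.
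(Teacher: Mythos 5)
The paper itself gives no proof of this proposition: it is stated as a known result, with the analogous statement for the Weyl algebra deferred to the literature (the generalized Stone--von Neumann theorem of \cite[Th.~2.11]{SY13}). So the comparison is against that reference's general Heisenberg-double argument, and your proposal is essentially a correct, self-contained specialization of it to $\Sy$. The individual steps all check out: faithfulness is indeed tautological from the definition of $\fh$ as a subalgebra of $\End_\Z(\Sy)$; the irreducibility argument over $\Q$ via $p_\mu^*$ applied to a term of maximal degree works (one extracts a nonzero constant and then multiplies back up); and the Hopf--Leibniz identity $g^*(fh)=\sum_{(g)} g_{(1)}^*(f)\, g_{(2)}^*(h)$ does follow exactly as you say from nondegeneracy of the pairing together with $\langle ga, fh\rangle = \langle \Delta(g)\Delta(a), f\otimes h\rangle$, and it holds integrally because $\Delta$ is defined over $\Z$ and both sides are honest elements of $\End_\Z(\Sy)$, hence of $\fh$. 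Two points you handle correctly but that deserve emphasis: first, the hypothesis $\Sy^*(v)=0$ must be read as $f^*(v)=0$ for $f\in\bigoplus_{n\ge 1}\Sy_n$ (since $1^*=\id$, the literal reading would force $v=0$), and your reading is the intended one; second, the hypothesis $\Z v\cong\Z$ enters precisely where you use it, to guarantee $v\otimes 1\ne 0$ in $V\otimes_\Z\Q$ so that irreducibility of $\cF\otimes_\Z\Q$ kills $\ker\varphi$ up to torsion, and freeness of $\Sy$ finishes (this is exactly the point of the remark following the proposition about excluding $\cF/n\cF$). The surjectivity step via the normal-ordered decomposition $\fh\cong\Sy\otimes_\Z\Sy^*$ is also consistent with the paper's equation \eqref{eq:fh-Z-module} and Exercise~\ref{prob:fh-as-Z-module}. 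I see no gaps.
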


\begin{rem}
  Proposition~\ref{prop:Stone-von-Neumann} is analogous to the characterization of the module $R_\Q$ for the Weyl algebra (see Section~\ref{sec:Weyl-algebra}).  We need the condition $\Z v \cong \Z$ in Proposition~\ref{prop:Stone-von-Neumann} to rule out the possibility $\cF/n\cF$ for some $n > 1$.  If we work over a field instead of over $\Z$, then this condition is not needed.
\end{rem}

We have (see Exercise~\ref{prob:fh-as-Z-module})
\begin{equation} \label{eq:fh-Z-module}
  \fh \cong \Sy \otimes_\Z \Sy^* \quad \text{(as $\Z$-modules)},
\end{equation}
where $\Sy^* = \{f^*\ |\ f \in \Sy\}$ and where the factors $\Sy$ and $\Sy^*$ (more precisely, $\Sy \otimes 1$ and $1 \otimes \Sy^*$) are in fact subalgebras.  Therefore, to find generating sets for $\fh$, it suffices to find generating sets for $\Sy$ (since the adjoints of the elements of such a set will generate $\Sy^*$).  If we work over $\Q$ and choose the generating set $\{p_n\}_{n \in \N_+}$ for $\Sy \otimes_\Z \Q$ and $\{q_n=p_n^*\}_{n \in \N_+}$ for $\Sy^* \otimes_\Z \Q$, then we obtain exactly the presentation~\eqref{eq:fh-usual-presentation}.  \comments{Add this as an exercise, with enough info to solve it (might need the value of the bilinear form on the $p_n$).}  Thus, $\fh_\Q = \fh \otimes_\Z \Q$.  This justifies our calling $\fh$ an integral version of the usual Heisenberg algebra.

Since we want to work over $\Z$, the natural generating sets to use are the elementary or complete symmetric functions, since these generate $\Sy$ over $\Z$.  We will choose generators $\{e_n,h_n^*\}_{n \in \N_+}$.  Since $\{e_n\}_{n \in \Z}$ and $\{h_n\}_{n \in \N_+}$ are free polynomial generating sets for $\Sy$, it follows from~\eqref{eq:fh-Z-module} that we can determine a complete set of relations by computing the commutation relations between the $e_n$ and $h_m^*$.

\begin{lem} \label{lem:he-commutation-relation}
  We have
  \[
    h_m^* e_n = e_n h_m^* + e_{n-1} h_{m-1}^*,\quad \fa m,n \in \N_+.
  \]
  Here we adopt the convention that $e_n =0$ and $h_n=0$ for $n < 0$.
\end{lem}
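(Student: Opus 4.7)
The plan is to prove this commutation relation using the Hopf algebra structure on $\Sy$ described in Section~\ref{sec:sym}. The key preliminary formula is the following explicit description of the adjoint operator $f^*$: for any $f,b \in \Sy$,
\[
  f^*(b) = \sum_{(b)} \langle f, b_{(1)} \rangle \, b_{(2)},
\]
where $\Delta(b) = \sum_{(b)} b_{(1)} \otimes b_{(2)}$ is written in Sweedler notation. This follows immediately from the defining condition $\langle a, f^*(b) \rangle = \langle fa, b \rangle$ together with the Hopf pairing property $\langle fa, b \rangle = \langle f \otimes a, \Delta(b) \rangle$ and nondegeneracy of the inner product.

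Next I would apply this formula with $f = h_m$ and $b$ replaced by $e_n b$. Since $\Delta$ is an algebra homomorphism, $\Delta(e_n b) = \Delta(e_n) \Delta(b)$, and by~\eqref{eq:sym-coproduct} we have $\Delta(e_n) = \sum_{i=0}^n e_i \otimes e_{n-i}$. This yields
\[
  h_m^*(e_n b) = \sum_{i=0}^n \sum_{(b)} \langle h_m, e_i b_{(1)} \rangle \, e_{n-i} b_{(2)}.
\]
I would then expand the inner pairing by the Hopf property once more:
\[
  \langle h_m, e_i b_{(1)} \rangle = \langle \Delta(h_m), e_i \otimes b_{(1)} \rangle = \sum_{j=0}^m \langle h_j, e_i \rangle \langle h_{m-j}, b_{(1)} \rangle.
\]

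The crux of the argument is the evaluation of $\langle h_j, e_i \rangle$. Since $h_j = h_{(j)}$ and $e_i = m_{(1^i)}$, the duality $\langle h_\lambda, m_\mu \rangle = \delta_{\lambda,\mu}$ forces $\langle h_j, e_i \rangle = \delta_{(j),(1^i)}$, which is nonzero only in the two cases $(i,j) = (0,0)$ and $(i,j) = (1,1)$, where it equals $1$. Substituting back, only two terms survive: the $(i,j)=(0,0)$ term yields $\sum_{(b)} \langle h_m, b_{(1)} \rangle e_n b_{(2)} = e_n h_m^*(b)$, while the $(i,j) = (1,1)$ term yields $\sum_{(b)} \langle h_{m-1}, b_{(1)} \rangle e_{n-1} b_{(2)} = e_{n-1} h_{m-1}^*(b)$.

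Summing these gives $h_m^*(e_n b) = e_n h_m^*(b) + e_{n-1} h_{m-1}^*(b)$ for every $b \in \Sy$, which is the claimed operator identity. The main obstacle is really just the Hopf-algebraic bookkeeping; once one recognizes that $\langle h_j, e_i \rangle$ is essentially a Kronecker delta supported on two small pairs, the whole computation collapses. One should also take care at the edge cases, using the conventions $e_0 = h_0 = 1$, $h_0^* = \id$, and $e_{-1} = h_{-1} = 0$, which match the conventions stated in the lemma.
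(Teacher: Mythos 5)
Your proof is correct. The paper itself gives no argument for this lemma --- it simply cites \cite[\S6.2]{SY13} --- and your computation is essentially the standard Hopf-pairing argument carried out there: the adjoint formula $f^*(b)=\sum_{(b)}\langle f,b_{(1)}\rangle\,b_{(2)}$, the coproduct formulas~\eqref{eq:sym-coproduct}, and the observation that $\langle h_j,e_i\rangle=\langle h_{(j)},m_{(1^i)}\rangle$ vanishes except for $(i,j)=(0,0)$ and $(1,1)$, so only the two surviving terms $e_nh_m^*$ and $e_{n-1}h_{m-1}^*$ remain. All the ingredients you invoke (the Hopf pairing identities, the symmetry of the inner product, $\Delta$ being an algebra map) are stated in Section~\ref{sec:sym}, so your argument is self-contained within the notes.
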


\begin{proof}
  See~\cite[\S6.2]{SY13}.
\end{proof}

\begin{cor} \label{cor:Heis-alg-integral-presentation}
  The Heisenberg algebra\index{Heisenberg algebra} $\fh$ is the unital associative $\Z$-algebra (i.e.\ unital ring) with generators $\{e_n,h_n^*\}_{n \in \N_+}$ and relations
  \[
    e_m e_n = e_n e_m,\quad h_m^* h_n^* = h_n^* h_m^*,\quad h_m^* e_n = e_n h_m^* + e_{n-1} h_{m-1}^*,\quad \fa n,m \in \N_+.
  \]
\end{cor}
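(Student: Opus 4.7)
The plan is to show that the algebra $\tilde{\fh}$ defined abstractly by the presentation of the corollary is isomorphic to $\fh$. There is an obvious map $\pi \colon \tilde{\fh} \to \fh$ sending the abstract generators to the operators with the same names, and this is well-defined because all three families of relations hold in $\fh$: the first two by commutativity within $\Sy$ (and its adjoint), and the third by Lemma~\ref{lem:he-commutation-relation}. The task is therefore to show $\pi$ is a bijection.

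For surjectivity, recall from Section~\ref{sec:sym} that $\Sy = \Z[e_1,e_2,\dotsc]$, so the $e_n$ generate the subalgebra $\Sy \otimes 1$ of $\fh$. Taking adjoints, the $h_n^*$ generate $1 \otimes \Sy^*$ as a subalgebra. In view of the $\Z$-module decomposition~\eqref{eq:fh-Z-module}, these two families jointly generate $\fh$, so $\pi$ is surjective.

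For injectivity, I would establish a normal form in $\tilde{\fh}$. Using the commutation relation $h_m^* e_n = e_n h_m^* + e_{n-1} h_{m-1}^*$ repeatedly, every monomial in the generators can be rewritten as a $\Z$-linear combination of monomials of the form $e_{\lambda_1} \dotsm e_{\lambda_\ell} h_{\mu_1}^* \dotsm h_{\mu_k}^*$, with all the $e$-factors preceding all the $h^*$-factors; the commutativity relations within each family then allow us to restrict to $\lambda, \mu \in \cP$ (partitions). This is a finite procedure because each application of the mixed relation replaces one $h^* e$ pair by either an $e h^*$ pair (fewer $h^*$'s to the left) or by a pair of strictly smaller total degree, so a straightforward induction on $(\text{number of inversions}, \text{total degree})$ terminates. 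Hence $\tilde{\fh}$ is spanned as a $\Z$-module by $\{e_\lambda h_\mu^*\}_{\lambda,\mu \in \cP}$.

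It then remains to see that the images $\pi(e_\lambda h_\mu^*) = e_\lambda h_\mu^*$ are linearly independent in $\fh$. This follows from the isomorphism~\eqref{eq:fh-Z-module}: under $\fh \cong \Sy \otimes_\Z \Sy^*$, the element $e_\lambda h_\mu^*$ corresponds to $e_\lambda \otimes h_\mu^*$, and since $\{e_\lambda\}_{\lambda \in \cP}$ is a $\Z$-basis of $\Sy$ and $\{h_\mu^*\}_{\mu \in \cP}$ is a $\Z$-basis of $\Sy^*$, the collection $\{e_\lambda \otimes h_\mu^*\}$ is a $\Z$-basis of $\Sy \otimes \Sy^*$. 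Thus $\pi$ carries a spanning set bijectively onto a $\Z$-basis, which forces $\pi$ to be an isomorphism. The main obstacle is verifying termination of the normal-form rewriting rigorously; this is where the ``lower-order'' term $e_{n-1} h_{m-1}^*$ in Lemma~\ref{lem:he-commutation-relation} is essential, and a careful double induction on the number of $h^* e$ inversions and on total degree (noting $e_n = 0 = h_n^*$ for $n \le 0$ terminates the recursion) will handle it cleanly.
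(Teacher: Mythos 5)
Your proposal is correct and follows exactly the route the paper takes implicitly: the paper justifies the corollary by observing that the $e_n$ and $h_n$ are free polynomial generators of $\Sy$, that $\fh \cong \Sy \otimes_\Z \Sy^*$ as a $\Z$-module via the multiplication map~\eqref{eq:fh-Z-module}, and that consequently the commutation relation of Lemma~\ref{lem:he-commutation-relation} completes the presentation --- you have simply written out the surjectivity, normal-form, and linear-independence details that the paper leaves to the reader. One small simplification: since both terms on the right of $h_m^* e_n = e_n h_m^* + e_{n-1} h_{m-1}^*$ already have the pair in the order $e\,h^*$, a single induction on the number of $h^*e$ inversions suffices for termination; the auxiliary induction on total degree is not needed.
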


The presentation of $\fh$ given in Corollary~\ref{cor:Heis-alg-integral-presentation} is the one that seems to be best suited to strong categorification of $\fh$.

\Exercises

\medskip

\begin{prob} \label{prob:fh-as-Z-module}
  Prove that $\fh \cong \Sy \otimes_\Z \Sy^*$ as a $\Z$-module.  You may use Lemma~\ref{lem:he-commutation-relation}.  \emph{Hint:} Use the grading on $\Sy$ (and the corresponding grading on $\Sy^*$) to define a natural grading on $\fh$.  Use this grading to show that the multiplication map $\Sy \otimes \Sy^* \to \fh$, $f \otimes g^* \mapsto fg^*$, is an isomorphism of $\Z$-modules.
\end{prob}

%
\section{Weak categorification of Fock space} \label{sec:Fock-weak-cat}
%

Our goal in this section is to categorify the Fock space representation $\cF$ of the Heisenberg algebra $\fh$.  Our underlying category will be the category $\cA$ used in Section~\ref{sec:sym-cat}.  This choice corresponds to the fact that the underlying $\Z$-module of $\cF$ is $\Sy$.  What we must do is to define exact functors on this category that categorify the action of $\fh$ on $\cF$.

Suppose that $\ell,m \in \N$, $L$ is an $(A_\ell \otimes A_m)$-module, and $M \in A_m\md$.  Then $L$ can be considered as an $A_m$-module and $\Hom_{A_m}(M, L)$ is naturally an $A_\ell$-module via the action \[
  (a \cdot f)(m) = (1 \otimes a)f(m),\quad \fa a \in A_\ell,\ f \in \Hom_{A_{m}}(M,L),\ m \in M.
\]
Now, for each $M \in A_m\md$, $m \in \N$, we define two functors $\Ind_M, \Res_M \colon \cA \to \cA$\index{IndM@$\Ind_M$}\index{ResM@$\Res_M$} by
\begin{gather*}
  \Ind_M(N) = \Ind_{A_m \otimes A_n}^{A_{m+n}} (M \otimes N) \in A_{m+n}\md,\quad \fa N \in A_n\md,\ n \in \N,\\
  \Res_M(N) = \Hom_{A_m} (M, \Res^{A_n}_{A_{n-m} \otimes A_m}(N)) \in A_{n-m}\md,\quad \fa N \in A_n\md,\ n \in \N,
\end{gather*}
where $\Res_M(N)$ is interpreted to be the zero object of $\cA$ if $n-m<0$, and we will always interpret $M \otimes N$, for an $A_m$-module $M$ and an $A_n$-module $N$, to be the $A_m \otimes A_n$-module $M \otimes_\C N$ (i.e.\ the outer tensor product module).  These functors are both exact (Exercise~\ref{prob:heisenberg-functors-exact}) and thus induce operators on $\cG_A$.

\begin{prop}[Naive categorification of Fock space] \label{prop:fock-space-weak-cat}
  For all $M \in \cA$, the following diagrams commute:
  \begin{equation} \label{eq:weak-Fock-cat-diagrams}
    \xymatrix{
      \cG_A \ar[rr]^{[\Ind_M]} \ar[d]_{\varphi_A} & & \cG_A \ar[d]^{\varphi_A} \\
      \Sy \ar[rr]^{\varphi_A([M])} & & \Sy
    }
    \qquad \qquad
    \xymatrix{
      \cG_A \ar[rr]^{[\Res_M]} \ar[d]_{\varphi_A} & & \cG_A \ar[d]^{\varphi_A} \\
      \Sy \ar[rr]^{\varphi_A([M])^*} & & \Sy
    }
  \end{equation}
  In other words, the category $\cA$, isomorphism $\varphi_A$, and functors $\Res_M$, $\Ind_M$, $M \in \cA$, give a naive categorification of the Fock space representation of the Heisenberg algebra.
\end{prop}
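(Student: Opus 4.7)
The plan is to verify the two diagrams in~\eqref{eq:weak-Fock-cat-diagrams} separately. For the left square (induction) I would reduce directly to the Hopf algebra categorification of Proposition~\ref{prop:Sym-Hopf-categorification}; for the right square (restriction) I would establish that $(\Ind_M, \Res_M)$ is an adjoint pair of functors and then transport this adjunction across $\varphi_A$ using nondegeneracy of the bilinear form.

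For the induction diagram, fix $M \in A_m\md$ and $N \in A_n\md$. By definition $\Ind_M(N) = \Ind_{A_m \otimes A_n}^{A_{m+n}}(M \otimes N) = \nabla(M \otimes N)$, where $\nabla$ is the multiplication functor of Section~\ref{sec:sym-cat}. Under the isomorphism $G_0(A_m \otimes A_n) \cong G_0(A_m) \otimes G_0(A_n)$ of Exercise~\ref{prob:Groth-of-product}, the class $[M \otimes N]$ corresponds to $[M] \otimes [N]$, so $[\Ind_M(N)] = [M] \cdot [N]$ in $\cG_A$. Applying $\varphi_A$ and using that it is an algebra isomorphism yields $\varphi_A([\Ind_M(N)]) = \varphi_A([M]) \cdot \varphi_A([N])$, which is commutativity of the left square at the level of classes, and linearity then gives the full statement.

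For the restriction diagram, the key step is to exhibit an adjunction
\[
  \Hom_{A_{m+n}}(\Ind_M(N), N') \cong \Hom_{A_n}(N, \Res_M(N')),\quad N \in A_n\md,\ N' \in A_{m+n}\md.
\]
This follows by composing ordinary Frobenius reciprocity between $A_{m+n}$ and $A_m \otimes A_n$ with the tensor--hom adjunction for the outer factor $M$:
\[
  \Hom_{A_{m+n}}(\Ind_M(N), N') \cong \Hom_{A_m \otimes A_n}(M \otimes N, \Res^{A_{m+n}}_{A_m \otimes A_n}(N')) \cong \Hom_{A_n}(N, \Res_M(N')).
\]
Since each $A_k$ is semisimple we have $G_0(A_k) = K_0(A_k)$, and the bilinear form of~\eqref{eq:bilinear-form} makes sense on $\cG_A \otimes \cG_A$. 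Taking $\dim_\C$ in the composite isomorphism gives
\[
  \langle [\Ind_M(N)], [N'] \rangle = \langle [N], [\Res_M(N')] \rangle.
\]

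To close the argument I would invoke the isometry property asserted in Proposition~\ref{prop:Sym-Hopf-categorification} (namely $\langle a,b \rangle = \langle \varphi_A(a), \varphi_A(b) \rangle$ for all $a,b \in \cG_A$), together with the induction step already established, to rewrite the adjunction identity as
\[
  \langle \varphi_A([N]), \varphi_A([\Res_M(N')]) \rangle = \langle \varphi_A([M]) \varphi_A([N]), \varphi_A([N']) \rangle = \langle \varphi_A([N]), \varphi_A([M])^*\varphi_A([N']) \rangle,
\]
where the last equality is the defining property of the adjoint operator on $\Sy$. Since this holds for all $N \in \cA$ and the form on $\Sy$ is nondegenerate, I obtain $\varphi_A([\Res_M(N')]) = \varphi_A([M])^*\varphi_A([N'])$, which is commutativity of the right square. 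The main obstacle I anticipate is the bookkeeping for the composite adjunction --- in particular, checking that the $A_n$-action on $\Hom_{A_m}(M, \Res^{A_{m+n}}_{A_m \otimes A_n}(N'))$ specified in the text really does correspond to the hom in the tensor--hom step; once that is pinned down, everything else is a routine application of Proposition~\ref{prop:Sym-Hopf-categorification}.
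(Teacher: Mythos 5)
Your proposal is correct and follows essentially the same route as the paper: the induction square is reduced to the algebra-isomorphism statement of Proposition~\ref{prop:Sym-Hopf-categorification}, and the restriction square is obtained by a duality argument combining the tensor--hom adjunction for the outer factor $M$ with Frobenius reciprocity, the isometry property of $\varphi_A$, and nondegeneracy of the form. The only cosmetic difference is that you package the middle steps as an explicit adjunction $(\Ind_M,\Res_M)$ before passing to dimensions, whereas the paper runs the same chain of $\Hom$-space identifications directly inside one sequence of pairing equalities (using that the form is a Hopf pairing, i.e.\ that $\nabla$ and $\Delta$ are adjoint); these are the same fact.
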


\begin{proof}
  Let $M \in A_m\md$, $N \in A_n\md$, $n,m \in \N$.  Then we have
  \begin{multline*}
    \varphi_A \circ [\Ind_M]([N]) = \varphi_A([\Ind_M(N)]) = \varphi_A([\Ind_{A_m \otimes A_n}^{A_{m+n}}(M \otimes N)]) \\
    = \varphi_A(\nabla([M] \otimes [N])) = \nabla(\varphi_A([M]) \otimes \varphi_A([N])) = \varphi_A([M]) \varphi_A([N]),
  \end{multline*}
  by Proposition~\ref{prop:Sym-Hopf-categorification}.  Thus the left hand diagram in~\eqref{eq:weak-Fock-cat-diagrams} commutes.

  For all $M \in A_m\md$, $N \in A_n\md$, $L \in A_{n-m}\md$, $n,m \in \N$,
  \begin{align*}
    \langle \varphi_A([L]), \varphi_A \circ [\Res_M]([N]) \rangle
    &= \langle [L], [\Res_M]([N]) \rangle \\
    &= \langle [L], [\Hom_{A_m}(M, \Res^{A_n}_{A_{n-m} \otimes A_m}(N))]\rangle \\
    &= \dim_\C \Hom_{A_{n-m}} (L, \Hom_{A_m}(M, \Res^{A_n}_{A_{n-m} \otimes A_m}(N))) \\
    &= \dim_\C \Hom_{A_{n-m} \otimes A_m} (L \otimes M, \Res^{A_n}_{A_{n-m} \otimes A_m}(N)) \\
    &= \langle [L] \otimes [M], \Delta([N]) \rangle \\
    &= \langle \nabla([L] \otimes [M]), [N] \rangle \\
    &= \langle \varphi_A(\nabla([L] \otimes [M])), \varphi_A([N]) \rangle \\
    &= \langle \varphi_A([L]) \varphi_A([M]), \varphi_A([N]) \rangle \\
    &= \langle \varphi_A([L]), \varphi_A([M])^* (\varphi_A([N])) \rangle.
  \end{align*}
  Thus, $\varphi_A \circ [\Res_M]([N]) = \varphi_A([M])^* \circ \varphi_A([N])$, by the nondegeneracy of the bilinear form.  Hence the right hand diagram in~\eqref{eq:weak-Fock-cat-diagrams} commutes.
\end{proof}

\begin{rem}
  Note that the above naive categorification is independent of the presentation of the Heisenberg algebra.  In this sense, it is even better than the naive categorification of Definition~\ref{def:naive-cat}.
\end{rem}

\begin{lem} \label{lem:Sn-nabla-commutative}
  For all $M \in A_m\md$, $N \in A_n\md$, $m,n \in \N$, we have
  \[
    \nabla(M \otimes N) \cong \nabla(N \otimes M) \quad \text{(as $A_{m+n}$-modules)}.
  \]
\end{lem}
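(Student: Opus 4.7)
The plan is to construct an explicit isomorphism using the ``block-swap'' permutation. Let $\sigma \in S_{m+n}$ be the permutation defined by
\[
  \sigma(i) =
  \begin{cases}
    i+n & \text{if } 1 \le i \le m, \\
    i-m & \text{if } m+1 \le i \le m+n.
  \end{cases}
\]
The key observation is that conjugation by $\sigma$ intertwines the two natural embeddings $S_m \times S_n \hookrightarrow S_{m+n}$ (acting on the first $m$ and last $n$ letters) and $S_n \times S_m \hookrightarrow S_{m+n}$ (acting on the first $n$ and last $m$ letters). More precisely, for $x \in A_m$ and $y \in A_n$, viewing $(x \otimes y) \in A_m \otimes A_n$ inside $A_{m+n}$ and $(y \otimes x) \in A_n \otimes A_m$ inside $A_{m+n}$ via the respective embeddings, one has $\sigma (x \otimes y) \sigma^{-1} = (y \otimes x)$ in $A_{m+n}$.

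With this in hand, I would define a map
\[
  \phi \colon A_{m+n} \otimes_{A_m \otimes A_n} (M \otimes N) \to A_{m+n} \otimes_{A_n \otimes A_m} (N \otimes M),\qquad a \otimes (u \otimes v) \mapsto a \sigma^{-1} \otimes (v \otimes u),
\]
and check well-definedness over the tensor product. Given $x \in A_m$, $y \in A_n$, one computes
\[
  \phi\bigl(a(x \otimes y) \otimes (u \otimes v)\bigr) = a(x \otimes y)\sigma^{-1} \otimes (v \otimes u) = a\sigma^{-1}(y \otimes x) \otimes (v \otimes u) = a\sigma^{-1} \otimes (yv \otimes xu),
\]
which agrees with $\phi\bigl(a \otimes (xu \otimes yv)\bigr)$, using the conjugation identity in the middle step. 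Since $A_{m+n}$ acts by left multiplication on the first tensor factor and $\sigma^{-1}$ appears only on the right, $\phi$ is a homomorphism of left $A_{m+n}$-modules.

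For the inverse, I would define symmetrically $\psi \colon b \otimes (v \otimes u) \mapsto b \sigma \otimes (u \otimes v)$, which is well-defined and $A_{m+n}$-linear by the same computation with the roles of $\sigma$ and $\sigma^{-1}$ swapped (and using that $\sigma^{-1}$ has the analogous block-swap behaviour). It is clear on simple tensors that $\psi \circ \phi = \id$ and $\phi \circ \psi = \id$, so $\phi$ is an isomorphism. The only real content is the conjugation identity $\sigma(x \otimes y)\sigma^{-1} = (y \otimes x)$, which is a direct check on transpositions and hence on all of $A_m \otimes A_n$; this is the one step that warrants care but is entirely routine. There is no serious obstacle.
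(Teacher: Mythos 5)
Your proof is correct, but it takes a genuinely different route from the one in the notes. The notes prove the lemma indirectly: by Proposition~\ref{prop:fock-space-weak-cat}, the classes $[\nabla(M\otimes N)]$ and $[\nabla(N\otimes M)]$ both map under $\varphi_A$ to $\varphi_A([M])\varphi_A([N])$, which is symmetric because $\Sy$ is commutative; since $\varphi_A$ is an isomorphism the two classes agree in $\cG_A$, and since $A_{m+n}$ is semisimple, equality of classes in the Grothendieck group forces the modules themselves to be isomorphic (Exercise~\ref{prob:semisimple-module-isom-equal-in-GG}). Your argument is the ``direct proof'' that the notes only cite (to \cite[Lem.~4.5]{SY13}): you exhibit an explicit isomorphism $a\otimes(u\otimes v)\mapsto a\sigma^{-1}\otimes(v\otimes u)$ built from the block-swap permutation, whose conjugation identity $\sigma(x\otimes y)\sigma^{-1}=(y\otimes x)$ is exactly what makes the map balanced over $A_m\otimes A_n$. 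What your approach buys is independence from the heavy machinery: it needs neither semisimplicity of $A_{m+n}$ nor the isomorphism $\varphi_A\colon\cG_A\to\Sy$ (whose proof the notes in any case defer to the literature), it produces a concrete natural isomorphism rather than a mere existence statement, and it works verbatim for group algebras over any commutative ring. What the notes' approach buys is brevity given the framework already in place, and it illustrates the categorification philosophy of deducing module-level statements from decategorified identities. Your argument as written is complete modulo the routine verifications you flag, all of which check out.
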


\begin{proof}
  One can either prove this directly (see, for example, \cite[Lem.~4.5]{SY13} for a direct proof in a more general setting) or note that, by Proposition~\ref{prop:fock-space-weak-cat}, we have
  \[
    \varphi_A([\nabla(M \otimes N)]) = \varphi_A([M]) \varphi_A([N]) = \varphi_A([N]) \varphi_A([M]) = \varphi_A([\nabla(N \otimes M)]),
  \]
  where the second equality follows from the fact that $\Sy$ is commutative.  Since $\varphi_A$ is an isomorphism, it follows that $[\nabla(M \otimes N)] = [\nabla(N \otimes M)]$.  Now, since $A_{m+n}$ is semisimple, the images of two modules in the Grothendieck group are equal if and only if the modules themselves are isomorphic (Exercise~\ref{prob:semisimple-module-isom-equal-in-GG}).  Thus $\nabla(M \otimes N) \cong \nabla(N \otimes M)$, as desired.
\end{proof}

\begin{prop}[Weak categorification of Fock space] \label{prop:Fock-weak-cat}
  We have the following isomorphisms of functors for all $m,n \in \N_+$:
  \begin{gather}
    \Ind_{E_m} \circ \Ind_{E_n} \cong \Ind_{E_n} \circ \Ind_{E_m},\quad \Res_{L_m} \circ \Res_{L_n} \cong \Res_{L_n} \circ \Res_{L_m}, \label{eq:Fock-weak-cat-rel1} \\
    \Res_{L_m} \circ \Ind_{E_n} \cong (\Ind_{E_n} \circ \Res_{L_m}) \oplus (\Ind_{E_{n-1}} \circ \Res_{L_{m-1}}). \label{eq:Fock-weak-cat-rel2}
  \end{gather}
  In other words, we have a weak categorification of the Fock space representation $\cF$ of the Heisenberg algebra $\fh$, with the functors $\Ind_{E_n}$, $\Res_{L_n}$, categorifying the action of the generators $e_n$, $h_n^*$, for $n \in \N$.
\end{prop}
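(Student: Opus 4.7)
My plan is to deduce each functor isomorphism from the naive categorification (Proposition~\ref{prop:fock-space-weak-cat}), the algebraic identities in $\Sy$, and the semisimplicity of $\cA$. The key principle is that, since each $A_n$ is semisimple by Maschke's Theorem (Lemma~\ref{lem:Maschke}), two finitely generated $A_n$-modules are isomorphic if and only if they have equal classes in $G_0(A_n)$ (Exercise~\ref{prob:semisimple-module-isom-equal-in-GG}). More strongly, any two $\C$-linear additive endofunctors of $\cA$ that agree on classes in $\cG_A$ are naturally isomorphic: one chooses pointwise isomorphisms on (representatives of isomorphism classes of) simple modules, extends additively via the decomposition of any object into simples, and verifies naturality using Schur's lemma, which reduces every morphism between sums of simples to a matrix of scalar multiples of identity-like maps, with which any additive functor commutes.

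With this principle in hand, the commutativity relations (\ref{eq:Fock-weak-cat-rel1}) are immediate. By Proposition~\ref{prop:fock-space-weak-cat}, the operator $[\Ind_{E_n}]$ on $\cG_A$ corresponds under $\varphi_A$ to multiplication by $e_n$ on $\Sy$. Since $\Sy$ is commutative, $e_m e_n = e_n e_m$, and hence $[\Ind_{E_m}\circ\Ind_{E_n}] = [\Ind_{E_n}\circ\Ind_{E_m}]$ as operators on $\cG_A$. The principle above then yields the desired natural isomorphism of functors. The analogous argument for $\Res_{L_m}\circ\Res_{L_n}\cong\Res_{L_n}\circ\Res_{L_m}$ uses that $h_m^*$ and $h_n^*$ commute on $\Sy$, being adjoints of the commuting multiplication operators $h_m$ and $h_n$.

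For the main relation (\ref{eq:Fock-weak-cat-rel2}) I invoke the commutation identity of Lemma~\ref{lem:he-commutation-relation}: in $\fh$ we have $h_m^* e_n = e_n h_m^* + e_{n-1} h_{m-1}^*$. Translating through Proposition~\ref{prop:fock-space-weak-cat} gives the identity
\[
  [\Res_{L_m}\circ\Ind_{E_n}] = [\Ind_{E_n}\circ\Res_{L_m}] + [\Ind_{E_{n-1}}\circ\Res_{L_{m-1}}]
\]
of operators on $\cG_A$. Since the class of a direct sum is the sum of the classes, this is exactly $[\Res_{L_m}\circ\Ind_{E_n}] = [(\Ind_{E_n}\circ\Res_{L_m})\oplus(\Ind_{E_{n-1}}\circ\Res_{L_{m-1}})]$ on every object, and the general principle then delivers the required natural isomorphism of functors.

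The main obstacle is the upgrade from equality in the Grothendieck group to a natural isomorphism of functors. This step is precisely where the semisimplicity of each $A_n$ is essential, in contrast with the nilcoxeter categorification of Chapter~4, where the module categories are not semisimple and one had to realize the functor isomorphism explicitly at the bimodule level (as in Proposition~\ref{prop:nilcoxter-bimodule-isom}). Once that principle is justified, the rest of the proof is a short deduction from results already established in the chapter.
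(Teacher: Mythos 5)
Your proof is correct, but it takes a genuinely different route from the one in the paper. The paper constructs the isomorphisms explicitly: for~\eqref{eq:Fock-weak-cat-rel1} it proves the stronger natural isomorphisms $\Ind_M \circ \Ind_N \cong \Ind_{\nabla(M \otimes N)}$ and $\Res_M \circ \Res_N \cong \Res_{\nabla(M \otimes N)}$ via transitivity of induction and the induction--restriction adjunction, and then invokes Lemma~\ref{lem:Sn-nabla-commutative}; for~\eqref{eq:Fock-weak-cat-rel2} it carries out an explicit Mackey-theorem computation together with Frobenius reciprocity. You instead do all the work in $\cG_A$ and upgrade the resulting equalities of operators to natural isomorphisms of functors using the principle that two $\C$-linear additive endofunctors of the split semisimple category $\cA$ which agree on classes are naturally isomorphic. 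That principle is true (every object decomposes canonically as $\bigoplus_\lambda \Hom_{\cA}(S^\lambda, -) \otimes_\C S^\lambda$, so a $\C$-linear functor is determined up to natural isomorphism by its values on the Specht modules), and it is the same mechanism the paper itself uses in Lemma~\ref{lem:Sn-nabla-commutative} via Exercise~\ref{prob:semisimple-module-isom-equal-in-GG} --- though only at the level of objects, not functors. Two remarks. First, in your naturality check you say the matrix entries are handled because ``any additive functor commutes'' with them; since the entries are arbitrary complex scalars, this step genuinely needs $\C$-linearity, not mere additivity, so keep the hypothesis as you stated it in the principle (the functors $\Ind_M$ and $\Res_M$ are of course $\C$-linear). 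Second, be aware of what each approach buys: yours is shorter but leans entirely on semisimplicity of the $A_n$ and on $\C$ being algebraically closed, and it produces the isomorphisms non-constructively; the paper's explicit isomorphisms survive in non-semisimple settings (compare Proposition~\ref{prop:nilcoxter-bimodule-isom} for the nilcoxeter tower) and are precisely the 2-morphism-level data one needs later for the strong categorification of Section~\ref{sec:Heis-cat}.
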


\begin{proof}
  In fact, for the isomorphisms~\eqref{eq:Fock-weak-cat-rel1}, we can prove something more general.  Suppose $M \in A_m\md$, $N \in A_n\md$, and $V \in A_v\md$.  Then we have
  \begin{align*}
    \Ind_M \circ \Ind_N (V) &= \Ind^{A_{m+n+v}}_{A_m \otimes A_{n + v}} \left( M \otimes \Ind^{A_{n + v}}_{A_n \otimes A_v} (N \otimes V) \right) \\
    &\cong \Ind^{A_{m+n+v}}_{A_m \otimes A_{n + v}} \Ind^{A_m \otimes A_{n + v}}_{A_m \otimes A_n \otimes A_v} (M \otimes N \otimes V) \\
    &\cong \Ind^{A_{m+n+v}}_{A_m \otimes A_n \otimes A_v} (M \otimes N \otimes V) \\
    &\cong \Ind^{A_{m+n+v}}_{A_{m+n} \otimes A_v} \Ind^{A_{m+n} \otimes A_v}_{A_m \otimes A_n \otimes A_v} (M \otimes N \otimes V) \\
    &\cong \Ind^{A_{m+n+v}}_{A_{m+n} \otimes A_v} \left( \Ind^{A_{m+n}}_{A_m \otimes A_n} (M \otimes N) \otimes V \right) \\
    &\cong \Ind_{\nabla(M \otimes N)} V.
  \end{align*}
  Since each of the above isomorphisms is natural in $V$, this proves that we have an isomorphism of functors $\Ind_M \circ \Ind_N \cong \Ind_{\nabla(M \otimes N)}$.  Therefore, by Lemma~\ref{lem:Sn-nabla-commutative}, for $m, n \in \N$, we have
  \[
    \Ind_{E_m} \circ \Ind_{E_n} \cong \Ind_{\nabla(E_m \otimes E_n)} \cong \Ind_{\nabla(E_n \otimes E_m)} \cong \Ind_{E_n} \circ \Ind_{E_m}.
  \]

  Similarly, we have
  \begin{align*}
    \Res_M \circ \Res_N (V) &= \Hom_{A_m} (M, \Res^{A_{v-n}}_{A_{v-n-m} \otimes A_m} \Hom_{A_n} (N, \Res^{A_v}_{A_{v-n} \otimes A_n} V)) \\
    &\cong \Hom_{A_m} (M, \Hom_{A_n} (N, \Res^{A_v}_{A_{v-m-n} \otimes A_m \otimes A_n} V)) \\
    &\cong \Hom_{A_m \otimes A_n} (M \otimes N, \Res^{A_v}_{A_{v-m-n} \otimes A_m \otimes A_n} V) \\
    &\cong \Hom_{A_m \otimes A_n} (M \otimes N, \Res^{A_{v-m-n} \otimes A_{m+n}}_{A_{v-m-n} \otimes A_m \otimes A_n} \Res^{A_v}_{A_{v-m-n} \otimes A_{m+n}} V) \\
    &\cong \Hom_{A_{m+n}} (\Ind^{A_{m+n}}_{A_m \otimes A_n} (M \otimes N),\Res^{A_v}_{A_{v-m-n} \otimes A_{m+n}} V) \\
    &\cong \Res_{\nabla(M \otimes N)} V,
  \end{align*}
  where, in the second-to-last isomorphism, we have used the fact that restriction is right adjoint to induction.  Since each of the above isomorphisms is natural in $V$, this proves that we have an isomorphism of functors $\Res_M \circ \Res_N \cong \Res_{\nabla(M \otimes N)}$.  Therefore, by Lemma~\ref{lem:Sn-nabla-commutative}, for $m,n \in \N$, we have
  \[
    \Res_{L_m} \circ \Res_{L_n} \cong \Res_{\nabla(L_m \otimes L_n)} \cong \Res_{\nabla(L_n \otimes L_m)} \cong \Res_{L_n} \circ \Res_{L_m}.
  \]

  Finally, for $m,n,v \in \N$ and $V \in A_v\md$, we have
  \begin{align*}
    \Res_{L_m} &\circ \Ind_{E_n} (V)
    = \Hom_{A_m}(L_m, \Res^{A_{n+v}}_{A_{n+v-m} \otimes A_m} \Ind_{A_n \otimes A_v}^{A_{n+v}} (E_n \otimes V)) \\
    &\cong \Hom_{A_m} \left( L_m, \bigoplus_{s+t=m} \Ind_{A_{n-s} \otimes A_s \otimes A_{v-t} \otimes A_t}^{A_{n+v-m} \otimes A_m} \Res^{A_n \otimes A_v}_{A_{n-s} \otimes A_s \otimes A_{v-t} \otimes A_t} (E_n \otimes V) \right) \\
    &\cong \Hom_{A_m} \left( L_m, \bigoplus_{s+t=m} \Ind_{A_{n-s} \otimes A_s \otimes A_{v-t} \otimes A_t}^{A_{n+v-m} \otimes A_m} \left( E_{n-s} \otimes E_s \otimes \Res^{A_v}_{A_{v-t} \otimes A_t} (V) \right) \right) \\
    &\cong \Hom_{A_m} \left( L_m, \bigoplus_{s+t=m} \Ind_{A_{n-s} \otimes A_{v-t} \otimes A_m}^{A_{n+v-m} \otimes A_m} \Ind_{A_{n-s} \otimes A_s \otimes A_{v-t} \otimes A_t}^{A_{n-s} \otimes A_{v-t} \otimes A_m} \left( E_{n-s} \otimes E_s \otimes \Res^{A_v}_{A_{v-t} \otimes A_t} (V) \right) \right) \\
    &\cong \bigoplus_{s+t=m} \Ind_{A_{n-s} \otimes A_{v-t}}^{A_{n+v-m}} \Hom_{A_m} \left( L_m, \Ind_{A_{n-s} \otimes A_s \otimes A_{v-t} \otimes A_t}^{A_{n-s} \otimes A_{v-t} \otimes A_m} \left( E_{n-s} \otimes E_s \otimes \Res^{A_v}_{A_{v-t} \otimes A_t} (V) \right) \right) \\
    &\cong \bigoplus_{s+t=m} \Ind_{A_{n-s} \otimes A_{v-t}}^{A_{n+v-m}} \Hom_{A_s \otimes A_t} \left( \Res^{A_m}_{A_s \otimes A_t} (L_m), \left( E_{n-s} \otimes E_s \otimes \Res^{A_v}_{A_{v-t} \otimes A_t} (V) \right) \right) \\
    &\cong \bigoplus_{s+t=m} \Ind_{A_{n-s} \otimes A_{v-t}}^{A_{n+v-m}} \Hom_{A_s \otimes A_t} \left( L_s \otimes L_t, \left( E_{n-s} \otimes E_s \otimes \Res^{A_v}_{A_{v-t} \otimes A_t} (V) \right) \right),
  \end{align*}
  where, in the first isomorphism, we used the Mackey Theorem relating induction and restriction, and, in the second-to-last isomorphism, we used the fact that restriction is left (and right) adjoint to induction in the current setting (this is the \define{Frobenius Reciprocity Theorem} for finite groups).  Now, $\Hom_{A_s} (L_s, E_s)=0$ unless $s=0,1$, since, in that case, $L_s$ and $E_s$ are both the trivial $\C$-module.  Thus
  \begin{align*}
    \Res_{L_m} \circ \Ind_{E_n} (V)
    &\cong \bigoplus_{s=0,1} \Ind_{A_{n-s} \otimes A_{v-m+s}}^{A_{n+v-m}} \Hom_{A_{m-s}} \left( L_{m-s}, \left( E_{n-s} \otimes \Res^{A_v}_{A_{v-m+s} \otimes A_{m-s}} (V) \right) \right) \\
    &\cong \bigoplus_{s=0,1} \Ind_{A_{n-s} \otimes A_{v-m+s}}^{A_{n+v-m}} \left( E_{n-s} \otimes \left( \Hom_{A_{m-s}} \left( L_{m-s}, \Res^{A_v}_{A_{v-m+s} \otimes A_{m-s}} (V) \right) \right) \right) \\
    &\cong \Ind_{E_n} (\Res_{L_m}(V)) \oplus \Ind_{E_{n-1}} (\Res_{L_{m-1}}(V)).
  \end{align*}
  Since the above isomorphisms are natural in $V$, this proves~\eqref{eq:Fock-weak-cat-rel2}.
\end{proof}

\begin{rem}
  In the proof of the isomorphisms~\eqref{eq:Fock-weak-cat-rel1}, we actually proved the isomorphisms
  \[
    \Ind_M \circ \Ind_N \cong \Ind_{\nabla(M \otimes N)},\quad \Res_M \circ \Res_N \cong \Res_{\nabla(M \otimes N)},\quad \fa M,N \in \cA.
  \]
  It is also possible to prove (see~\cite[Th.~3.18]{SY13}) that
  \[
    \Res_M \circ \Ind_N \cong \nabla \Res_{\Delta(M)} (N \otimes -),\quad \fa M,N \in \cA,
  \]
  which is a generalization of~\eqref{eq:Fock-weak-cat-rel2} (see~\cite[\S6.3]{SY13} for details).  Thus, we actually have a presentation-independent weak categorification of the Fock space representation of the Heisenberg algebra.
\end{rem}

\Exercises

\medskip

\begin{prob} \label{prob:heisenberg-functors-exact}
  Show that the functors $\Ind_M$ and $\Res_M$ are exact for all $M \in A_m\md$, $m \in \N$.
\end{prob}

\begin{prob} \label{prob:semisimple-module-isom-equal-in-GG}
  Suppose $B$ is a semisimple algebra.  Show that two $B$-modules $M$ and $N$ are isomorphic if and only if $[M]=[N]$ in $\cG(B)$.
\end{prob}

%
\section{Towers of algebras and the Heisenberg double}
%

The weak categorifications of the polynomial representation of the Weyl algebra given in Section~\ref{sec:Weyl-weak-cat} and the Fock space representation of the Heisenberg algebra given in Section~\ref{sec:Fock-weak-cat} have a lot in common.  In fact, they are both special cases of a more general construction.

A \define{strong tower of algebras} is a graded algebra
\[
  A = \bigoplus_{n \in \N} A_n,
\]
such that each graded piece $A_n$, $n \in \N$, is a finite-dimensional algebra (with a different multiplication than that of $A$), and satisfying certain other natural conditions (see~\cite[\S3.2]{SY13} for the precise definition).  Then
\[
  \cG = \bigoplus_{n \in \N} G_0(A_n) \quad \text{and} \quad \cK = \bigoplus_{n \in \N} K_0(A_n)
\]
are dual graded connected Hopf algebras under the maps induced by the functors~\eqref{eq:Sn-Hopf-functors}.

Now, given a Hopf algebra $H^+$, let $H^-$ be the dual Hopf algebra.  Then one can define a natural algebra structure on $H^+ \otimes H^-$.  The resulting algebra is the \define{Heisenberg double} of $H^+$.  This algebra acts naturally on $H^+$, and we call this the \define{Fock space} representation. Then, in general, the functors
\[
  \Ind_M, \Res_P,\quad M \in A_m\md,\ P \in A_p\pmd,\ m,p \in \N,
\]
can be defined as in Section~\ref{sec:Fock-weak-cat} and categorify the Fock space representation of the Heisenberg double.

Starting with the tower of nilcoxeter algebras, the corresponding Heisenberg double is the Weyl algebra, and its Fock space representation is the polynomial representation.  So we recover the categorification of Section~\ref{sec:Weyl-weak-cat}.  If we instead start with the tower of group algebras of symmetric groups (or Hecke algebras of type $A$ at a generic parameter), the associated Heisenberg double is the Heisenberg algebra and we recover the categorification of Section~\ref{sec:Fock-weak-cat}.  We can also take the tower of \emph{0-Hecke algebras}\index{0-Hecke algebra}.  In this case, $\cG$ and $\cK$ are the Hopf algebras of \emph{quasisymmetric functions}\index{quasisymmetric function} and \emph{noncommutative symmetric functions}\index{noncommutative symmetric function}, respectively.  The corresponding Heisenberg double is called the \define{quasi-Heisenberg algebra}.  We refer the reader to~\cite{SY13} for further details on the categorification of the Fock space representation of the Heisenberg double.

%
\chapter{Strong categorification} \index{strong categorification}\index{categorification!strong}
\pagestyle{headings}
%

In this chapter, we will introduce the concept of a strong categorification.  We will then present, as an example, the (conjectural) strong categorification of the Heisenberg algebra due to Khovanov (see~\cite{Kho10}).

%
\section{Basic algebraic concepts viewed as categories} \label{sec:alg-concepts-as-cats}
%

In this section we make some simple observations about how certain concepts in algebra can, in fact, be viewed as categories.  If $\cC$ is a category, we will write $\Ob \cC$ for the class of objects of $\cC$.  For $X,Y \in \Ob \cC$, we write $\Mor_\cC(X,Y)$ for the class of morphisms from $X$ to $Y$.  By a common abuse of notation, we will often write $X \in \cC$ to mean $X \in \Ob \cC$.  Recall that a \define{monoidal category}\index{category!monoidal} is a category equipped with a tensor product (satisfying certain natural conditions such as associativity and the existence of an identity object).  We refer the reader to \cite[Ch.~VII]{Mac98} for further details.  For our purposes, it suffices to know that the category of modules over a (fixed) commutative ring is monoidal.  Hence, the categories of abelian groups and vector spaces (over a fixed field) are monoidal.

\begin{eg}[Monoids and groups]
  Monoids are the same as one-object categories.  More precisely, if $\cC$ is a category with only one object $X$, then $\Mor_\cC (X,X)$ is a monoid, with multiplication given by composition.  Similarly, groups are the same as one-object categories where all morphisms are isomorphisms (i.e.\ invertible).
\end{eg}

\begin{defin}[Enriched category]
  Suppose $\cM$ is a (concrete) monoidal category.  Then a category $\cC$ is \emph{enriched}\index{enriched category}\index{category!enriched} over $\cM$ (or, is an $\cM$-\emph{category}) if $\Mor_\cC (X,Y) \in \cM$ for all $X,Y \in \cC$ and composition of morphisms in $\cC$ is a morphism in $\cM$,
  \[
    \Mor_\cC (Y,Z) \otimes \Mor_\cC(X,Y) \to \Mor_\cC(X,Z).
  \]
  We require that this composition is associative (up to isomorphism) and that, for all $X \in \cC$, $\Mor_\cC(X,X)$ contains a unit with respect to the composition.
  \comments{Find good reference for more precise definition.}
\end{defin}

\begin{defin}[Enriched functor]
  If $\cC$ and $\cD$ are categories enriched over a monoidal category $\cM$, then an $\cM$-enriched functor\index{enriched functor}\index{functor!enriched} (or a functor enriched over $\cM$) from $\cC$ to $\cD$ is a usual functor $F \colon \cC \to \cD$ such that, for each $X,Y \in \cC$, the induced map $\Mor_\cC(X,Y) \to \Mor_\cD(F(X),F(Y))$ is a morphism in $\cM$.
\end{defin}

\begin{egs}
  \begin{asparaenum}[(a)]
    \item Ordinary categories are enriched over the category of sets (recall that the tensor product for sets is the cartesian product).

    \item By definition, a category is \emph{preadditive}\index{preadditive category}\index{category!preadditive} if it is enriched over the category of abelian groups.

    \item The category of vector spaces over a field $\F$ is enriched over itself, since the space of linear maps $\Hom_\F (V,W)$ is a vector space for all vector spaces $V$ and $W$, and the composition of linear maps is a bilinear operation.
  \end{asparaenum}
\end{egs}

\begin{eg}[Rings]
  A ring is the same as a preadditive category with one object.  More precisely, if $\cC$ is a category with only one object $X$, then $\Mor_\cC(X,X)$ is a ring.
\end{eg}

\begin{defin}[$R$-linear category]
  If $R$ is a commutative ring, then we say a category is $R$-\emph{linear}\index{R-linear@$R$-linear category}\index{category!$R$-linear} if it is enriched over the category of $R$-modules.
\end{defin}

\begin{eg}[Unital associative algebras] \label{eg:alg-as-cat}
  Suppose $R$ is a commutative ring.  Then a unital associative $R$-algebra is the same as a category with one object, enriched over the category of $R$-modules.
\end{eg}

\begin{eg}[Ring with idempotents] \label{eg:alg-idems-as-cat}
  Suppose $R$ is a commutative ring and $B$ is a unital associative $R$-algebra with a set of orthogonal idempotents $\{e_1,\dotsc,e_n\}$ (i.e.\ $e_i e_j = \delta_{i,j} e_i$ for all $i,j=1,\dotsc,n$) such that $1 = e_1 + \dotsb + e_n$.  Then we have
  \[
    B = \bigoplus_{i,j} {_i B_j}, \quad \text{where } {_i B_j} = e_iBe_j.
  \]
  Then $({_i B_j}) ({_k B_\ell}) \subseteq \delta_{j,k} ({_i B_\ell})$ for all $i,j,k,\ell$.  Is it straightforward to verify that $B$, together with the collection of idempotents $\{e_1,\dotsc,e_n\}$, is equivalent to an $R$-linear category with objects $\{1,\dotsc,n\}$, such that the morphisms from $i \to j$ are ${_j B_i}$.
\end{eg}

\begin{eg}[Lusztig's modified enveloping algebra]\label{eg:modified-env-alg} \index{modified enveloping algebra}
  Suppose $\mathfrak{g}$ is a symmetrizable Kac-Moody algebra and let $U(\mathfrak{g})$ be the enveloping algebra of $\mathfrak{g}$.  By the Poincar\'e--Brikhoff--Witt Theorem, we have an isomorphism (as modules over the ground field/ring $R$) $U(\mathfrak{g}) \cong U(\mathfrak{n}^-) \otimes_R U(\mathfrak{h}) \otimes_R U(\mathfrak{n}^+)$.  Lusztig introduced a \define{modified form} of $U(\mathfrak{g})$ by replacing $U(\mathfrak{h})$ by a system of idempotents:
  \[
    \dot{U}(\mathfrak{g}) = U(\mathfrak{n}^-) \otimes_R \left( \bigoplus_{\lambda \in P} R 1_\lambda \right) \otimes_R U(\mathfrak{n}^+),
  \]
  where $P$ is the weight lattice of $\mathfrak{g}$ and $1_\lambda 1_\mu = \delta_{\lambda,\mu} 1_\lambda$ (i.e.\ the idempotents are orthogonal).  A representation of $\dot{U}(\mathfrak{g})$ is equivalent to a representation of $U(\mathfrak{g})$ with a weight space decomposition (the idempotent $1_\lambda$ acts as projection onto the weight space with weight $\lambda$).  However, $\dot{U}(\mathfrak{g})$ can be naturally viewed as a category whose objects are weights and where the set of morphisms from $\lambda$ to $\mu$ is $1_\mu \dot{U}(\mathfrak{g}) 1_\lambda$ for all $\lambda,\mu \in P$.  This modified form is better suited to categorification.  The above discussion also applies to the \emph{quantized} enveloping algebra.
\end{eg}

Now that we have seen that various algebraic concepts, such as monoids, groups, and algebras, can be viewed as categories, we turn our attention to representations.

\begin{eg}[Group actions on sets]
  Consider a group $\cC$, thought of as a one-object category.  What is a functor $F \colon \cC \to \Set$, where $\Set$ is the category of sets?  The single object $X$ of $\cC$ must be mapped by $F$ to an object $A$ of $\Set$ (i.e.\ a set $A$).  Then, each morphism in $\Mor_\cC (X,X)$ (i.e.\ element of the group) is mapped by $F$ to a set automorphism of $A$.  This mapping respects composition.  Therefore, a functor $\cC \to \Set$ is simply an action of a group on a set.
\end{eg}

\begin{eg}
  If $\cC$ is a monoid, group, or algebra (viewed as a one-object category), then a functor $\cC \to \Vect_\F$ (where $\F$ is a field) is a representation of $\cC$.
\end{eg}

Once we view representations in this way, as functors, then we can consider representations in \emph{any} appropriate category (i.e.\ category with sufficient structure).  For instance, if $\cC$ is a group (viewed as a category), then a functor from $\cC$ to the category of topologicial spaces is an action of a group on a topological space.  It also becomes natural to ask what a natural transformation of functors is in this picture.  We leave it as an exercise (Exercise~\ref{prob:nat-trans-rep-hom}) to show that natural transformations correspond to homomorphisms of representations.

\begin{eg}[Directed graphs]
  Consider the category $\cC$ with two objects, $E$ and $V$, and two morphisms $s,t \colon E \to V$.  (We always assume that we have the identity morphisms for each object.)  Then a functor from $\cC$ to the category of sets is a directed graph.  The images of $E$ and $V$ under the functor correspond to the sets of edges and vertices of the directed graph, respectively.  The images of $s$ and $t$ under the functor correspond to the source and target maps of the directed graph, respectively.
\end{eg}

\Exercises

\medskip

\begin{prob} \label{prob:nat-trans-rep-hom}
  Suppose $\cC$ is a group or algebra, viewed as a one-object category, and $\F$ is a field.  If $F,G \colon \cC \to \Vect_\F$ are functors (i.e.\ representations of $\cC$), show that a natural transformation from $F$ to $G$ corresponds to a homomorphism of representations.
\end{prob}

%
\section{2-categories and their Grothendieck groups} \label{sec:2cat}
%

We saw in Section~\ref{sec:alg-concepts-as-cats} that we can view algebras (or algebras with a given set of idempotents) as categories.  If we want to categorify such a thing, we will need to move one step up on the categorical ladder.  In particular, we need the notion of a 2-category.

\begin{defin}[2-category, 2-functor]
  A \define{2-category} is a category enriched over the category of categories.  In particular, for any two objects $X,Y$ of a 2-category $\fC$, the morphisms $\Mor_\fC(X,Y)$ form a category. The objects of this category $\Mor_\fC(X,Y)$ are called \emph{1-morphisms}\index{1-morphism} of $\fC$ and the morphisms of $\Mor_\fC(X,Y)$ are called \emph{2-morphisms}\index{2-morphism} of $\fC$.   Composition (of 2-morphisms) in the category $\Mor_\fC(X,Y)$ is called \define{vertical composition} and is denoted $\circ_1$.  The composition functor
  \[
    \Mor_\fC(Y,Z) \times \Mor_\fC(X,Y) \to \Mor_\fC(X,Z)
  \]
  is called \define{horizontal composition} and is denoted $\circ_0$.  A \define{2-functor} between 2-categories is a functor enriched over the category of categories.
\end{defin}

\begin{eg}[The 2-category of categories] \index{2-category!of categories}
  The category of (small) categories is in fact a 2-category.  Its objects are (small) categories, its 1-morphisms are functors, and its 2-morphisms are natural transformations.
\end{eg}

\begin{eg}[Monoidal categories] \label{eg:monoidal-cat-as-2-cat}
  A 2-category with one object is a (strict) monoidal category.  More precisely, if $\fC$ is a 2-category with one object $X$, then $\Mor_\fC(X,X)$ is a strict monoidal category.  The vertical composition $\circ_1$ is the composition in the monoidal category, while the horizontal composition $\circ_0$ is the tensor product.
\end{eg}

\begin{eg}[The 2-category of algebras and bimodules] \index{2-category!of bimodules}\index{bimodule 2-category}
  Suppose $R$ is a commutative ring.  Then we have a 2-category of bimodules over $R$-algebras.
  \begin{itemize}
    \item The objects are $R$-algebras.
    \item For any two $R$-algebras $B_1,B_2$, the 1-morphisms from $B_1$ to $B_2$ are $(B_2,B_1)$-bimodules.
    \item For any two $(B_2,B_1)$-bimodules $M$ and $N$, the 2-morphisms from $M$ to $N$ are bimodule homomorphisms.
  \end{itemize}
  The composition of 1-morphisms is given by the tensor product of modules.  In other words, if $M$ is a $(B_2,B_1)$-bimodule and $N$ is a $(B_3,B_2)$-bimodule, then their composition is the $(B_3,B_1)$-bimodule $N \otimes_{B_2} M$.
\end{eg}

There is a 2-functor from the 2-category of bimodules over $R$-algebras to the 2-category of categories that
\begin{itemize}
  \item sends an $R$-algebra $B$ to the category $B\Md$ of all $B$-modules,
  \item sends a $(B_2,B_1)$-bimodule $M$ to the tensor product functor
    \[
      (M \otimes_{B_1} -) \colon B_1\Md \to B_2\Md,
    \]
  \item sends a bimodule map to the corresponding natural transformation of functors (see Exercise~\ref{prob:bimodule-hom-natural-trans}).
\end{itemize}
The Eilenberg--Watts Theorem\index{Eilenberg--Watts Theorem} states that the image of this 2-functor consists of the colimit-preserving functors between categories of modules.

%
\section{Strong categorification}
%

To take the Grothendieck group of a 2-category, we take the Grothendieck groups of the morphism categories.

\begin{defin}[Additive 2-category, abelian 2-category, $R$-linear 2-category]
  Suppose $R$ is a commutative ring.  A 2-category is said to be \emph{additive}\index{additive 2-category}\index{2-category!additive}, \emph{abelian}\index{abelian 2-category}\index{2-category!abelian}, or \emph{$R$-linear}\index{R-linear@$R$-linear 2-category}\index{2-category!R-linear@$R$-linear} if it is enriched over the category of additive categories, abelian categories, or $R$-linear categories, respectively.
\end{defin}

\begin{defin}[Grothendieck group of a 2-category]
  The \emph{Grothendieck group}\index{Grothendieck group!of a 2-category} (resp.\ \emph{split Grothendieck group}\index{split Grothendieck group!of a 2-category}) of an abelian (resp.\ additive) 2-category $\fC$ is the category $\cK_0(\fC)$ (resp.\ $\cK_0^\spl(\fC)$) whose objects are the objects of $\fC$ and such that, for all $X,Y \in \Ob \fC$, $\Mor_{\cK_0(\fC)}(X,Y)$ (resp.\ $\Mor_{\cK_0^\spl(\fC)}(X,Y)$) is equal to $\cK_0(\Mor_\fC(X,Y))$ (resp.\ $\cK_0^\spl(\Mor_\fC(X,Y))$), the Grothendieck group (resp.\ split Grothendieck group) of the category $\Mor_\fC(X,Y)$.  Note that $\cK_0(\fC)$ and $\cK_0^\spl(\fC)$ are both preadditive categories (i.e.\ are enriched over the category of abelian groups).  The composition in $\cK_0(\fC)$ or $\cK_0^\spl(\fC)$ is defined by
  \[
    [f] \circ [g] = [f \circ_0 g],\quad \fa f \in \Mor_\fC(Y,Z),\ g \in \Mor_\fC(X,Y),\ X,Y,Z \in \Ob \fC.
  \]
\end{defin}

We are now in a position to define the notion of strong categorification (sometimes simply called categorification).

\begin{defin}[Strong categorification] \label{def:strong-cat}
  Suppose $R$ is a commutative ring and let $\cC$ be an $R$-linear category.  A \define{strong categorification}\index{categorification!strong} of $\cC$ is a pair $(\fC, \varphi)$, where either
  \begin{enumerate}[(a)]
    \item $\fC$ is an additive 2-category and $\varphi \colon \mathcal{K}_0^\spl(\fC) \otimes_\Z R \to \cC$ is an isomorphism, or
    \item $\fC$ is an abelian 2-category and $\varphi \colon \mathcal{K}_0(\fC) \otimes_\Z R \to \cC$ is an isomorphism.
  \end{enumerate}
  Here the tensor product $\otimes_\Z R$ means that we tensor the morphism sets with $R$ over $\Z$, turning the additive categories $\cK_0^\spl(\cC)$ and $\cK_0(\cC)$ into $R$-linear categories.
\end{defin}

\begin{rem} \label{rem:assoc-alg-cat}
  Since a unital associative $R$-algebra can be viewed as a one-object category (see Example~\ref{eg:alg-as-cat}), Definition~\ref{def:strong-cat} includes the definition of the strong categorification of such algebras.  In fact, a 2-category with one object is the same as a monoidal category (see Example~\ref{eg:monoidal-cat-as-2-cat}).
  \comments{Give reference for periodic table.}
  Thus, one can also give the definition of a strong categorification of a unital associative $R$-algebra in terms of monodial categories.  However, one often wants to categorify an algebra together with some collection of idempotents, which can be viewed as a category with multiple objects as in Example~\ref{eg:alg-idems-as-cat}.
\end{rem}

For our first example of strong categorification, we return to the setting of Example~\ref{eg:Maz}.

\begin{eg}[{\cite[Ex.~2.14]{Maz12}}] \label{eg:Maz-strong}
  Let $B=\C[b]/(b^2-2b)$ and let $D = \C[x]/(x^2)$ be the algebra of dual numbers.  Consider the $(D,D)$-bimodule $X = D \otimes_\C D$.  Let $\fC$ be the 2-category with one object $I = D\md$ and with $\Mor_\fC(I,I)$ equal to the full additive subcategory of the category of endofunctors of $I$ consisting of all functors isomorphic to direct sums of copies of $\id = \id_I$ and $F = X \otimes_D -$.  One can check that $F \circ F \cong F \oplus F$ (Exercise~\ref{prob:Maz-functor-isom}).  The classes $[\id]$ and $[F]$ form a basis of $\cK_0^\spl(\Mor_\fC(I,I))$ and the map
  \[
    \varphi \colon \cK_0^\spl(\fC) \otimes_\Z \C \to B,\quad \varphi([\id])=1,\quad \varphi([F]) = b
  \]
  is an isomorphism.  Hence $(\fC,\varphi)$ is a strong categorification of $B$.
\end{eg}

\Exercises

\medskip

\begin{prob} \label{prob:Maz-functor-isom}
  With notation as in Example~\ref{eg:Maz-strong}, show that one has an isomorphism of functors $F \circ F \cong \id \oplus \id$.
\end{prob}

%
\section{The graphical Heisenberg category} \label{sec:Heis-cat}
%

In this section, we define the monoidal category that will yield a categorification of the Heisenberg algebra $\fh$.  This monoidal category was defined by Khovanov in~\cite{Kho10}.  It was $q$-deformed in~\cite{LS13} by replacing group algebras of symmetric groups by Hecke algebras of type $A$.  We also refer the reader to the expository paper~\cite{LS12} for an overview of Heisenberg categorification.  In fact, the presentation below closely follows~\cite[\S3.1]{LS12}.

As noted in Example~\ref{eg:alg-as-cat}, the unital associative $\Z$-algebra $\fh$ can be thought of as a category with one object.  Thus, its categorification should involve a 2-category with one object.  But this is nothing more than a (strict) monoidal category (see Example~\ref{eg:monoidal-cat-as-2-cat}).

We define a $\C$-linear strict monoidal category $\cH'$\index{H:$\cH'$} as follows.  The set of objects is generated by two objects $Q_\up$ and $Q_\down$.  In other words, an arbitrary object of $\cH'$ is a finite direct sum of tensor products $Q_\varepsilon := Q_{\varepsilon_1} \otimes \dots \otimes Q_{\varepsilon_n}$, where $\varepsilon = \varepsilon_1 \dots \varepsilon_n$ is a finite sequence of $\up$ and $\down$ symbols.  The unit object is $\mathbf{1}=Q_\emptyset$.

The space of morphisms $\Mor_{\cH'}(Q_\varepsilon, Q_{\varepsilon'})$ is the $\C$-module generated by  planar diagrams modulo local relations.   The diagrams are oriented compact one-manifolds immersed in the strip $\R \times [0,1]$, modulo rel boundary isotopies.  The endpoints of the one-manifold are located at $\{1,\dots,m\} \times \{0\}$ and $\{1,\dots,k\} \times \{1\}$, where $m$ and $k$ are the lengths of the sequences $\varepsilon$ and $\varepsilon'$ respectively.  The orientation of the one-manifold at the endpoints must agree with the arrows in the sequences $\varepsilon$ and $\varepsilon'$ and triple intersections are not allowed.  For example, the diagram
\[
  \begin{tikzpicture}[>=stealth]
    \draw[->] (0,3) .. controls (0,2) and (2,2) .. (2,3);
    \draw[->] (1,3) .. controls (1,2) and (0,1) .. (0,2) .. controls (0,3) and (1,1) .. (0,0);
    \draw[->] (1,0) .. controls (1,1) and (0,0) .. (0,1) .. controls (0,2) and (3,1) .. (3,0);
    \draw[->] (4,0) .. controls (4,1) and (2,1) .. (2,0);
    \draw[->] (3,2) arc(-180:180:.5);
  \end{tikzpicture}
\]
is a morphism from $Q_{\down \up \down \down \up}$ to $Q_{\down \down \up}$ (note that, in this sense, diagrams are read from bottom to top).  An arbitrary morphism is a $\C$-linear composition of such diagrams.  Composition of morphisms is given by the natural vertical gluing of diagrams and the tensor product of morphisms is horizontal juxtaposition.  An endomorphism of $\mathbf{1}$ is a diagram without endpoints.  The local relations are as follows.

\begin{equation} \label{eq:local-relation-basic-symmetric}
\begin{tikzpicture}[>=stealth,baseline=25pt]
  \draw (0,0) .. controls (1,1) .. (0,2)[->];
  \draw (1,0) .. controls (0,1) .. (1,2)[->] ;
  \draw (1.5,1) node {=};
  \draw (2.5,0) --(2.5,2)[->];
  \draw (3.5,0) -- (3.5,2)[->];
\end{tikzpicture}
\end{equation}

\begin{equation} \label{eq:local-relation-braid}
\begin{tikzpicture}[>=stealth,baseline=25pt]
  \draw (0,0) -- (2,2)[->];
  \draw (2,0) -- (0,2)[->];
  \draw (1,0) .. controls (0,1) .. (1,2)[->];
  \draw (2.5,1) node {=};
  \draw (3,0) -- (5,2)[->];
  \draw (5,0) -- (3,2)[->];
  \draw (4,0) .. controls (5,1) .. (4,2)[->];
\end{tikzpicture}
\end{equation}

\begin{equation} \label{eq:local-relation-up-down-double-crossing}
\begin{tikzpicture}[>=stealth,baseline=25pt]
  \draw (0,0) .. controls (1,1) .. (0,2)[<-];
  \draw (1,0) .. controls (0,1) .. (1,2)[->] ;
  \draw (1.5,1) node {=};
  \draw (2.5,0) --(2.5,2)[<-];
  \draw (3.5,0) -- (3.5,2)[->];
  \draw (4,1) node {$-$};
  \draw (4.5,1.75) arc (180:360:.5) ;
  \draw (4.5,2) -- (4.5,1.75) ;
  \draw (5.5,2) -- (5.5,1.75) [<-];
  \draw (5.5,.25) arc (0:180:.5) ;
  \draw (5.5,0) -- (5.5,.25) ;
  \draw (4.5,0) -- (4.5,.25) [<-];
\end{tikzpicture} \qquad \qquad
\begin{tikzpicture}[>=stealth,baseline=25pt]
  \draw (0,0) .. controls (1,1) .. (0,2)[->];
  \draw (1,0) .. controls (0,1) .. (1,2)[<-] ;
  \draw (1.5,1) node {$=$};
  \draw (2.3,0) --(2.3,2)[->];
  \draw (3.3,0) -- (3.3,2)[<-];
\end{tikzpicture}
\end{equation}

\begin{equation} \label{eq:cc-circle-and-left-curl}
\begin{tikzpicture}[>=stealth,baseline=0pt]
  \draw [<-](0,0) arc(180:0:.5);
  \draw (0,0) arc(180:360:.5);
  \draw (1.5,0) node {$=$};
  \draw (2,0) node {$\id$};
  \draw (0,-1) node {};
  \draw (0,1) node {};
\end{tikzpicture} \qquad \qquad
\begin{tikzpicture}[>=stealth,baseline=0pt]
  \draw (-1,0) .. controls (-1,.5) and (-.3,.5) .. (-.1,0) ;
  \draw (-1,0) .. controls (-1,-.5) and (-.3,-.5) .. (-.1,0) ;
  \draw (0,-1) .. controls (0,-.5) .. (-.1,0) ;
  \draw (-.1,0) .. controls (0,.5) .. (0,1) [->] ;
  \draw (0.7,0) node {$=0$};
\end{tikzpicture}
\end{equation}

By \define{local relation}, we mean that any time we see one of the diagrams on the left hand sides of the equations above as a sub-diagram of a larger diagram, we may replace this sub-diagram by the corresponding linear combination of diagrams on the right hand side of the equation.

We already saw in Section~\ref{sec:Fock-weak-cat} that the tower of symmetric groups can be used to categorify the Fock space representation of the Heisenberg algebra, which is a faithful representation.  Thus, it is the behaviour of the symmetric groups that motivates our definition of the local relations above.  Note that relations~\eqref{eq:local-relation-basic-symmetric} and~\eqref{eq:local-relation-braid} are simply the relations defining the symmetric group, if we think of a simple transposition as a crossing of neighbouring upward pointing strands.  (The relation that distant simple transpositions commute comes for free in this graphical description since it follows from the fact that we consider diagrams up to isotopy preserving the boundary.) Relations~\eqref{eq:local-relation-up-down-double-crossing} and~\eqref{eq:cc-circle-and-left-curl} come from the behaviour of induction and restriction between symmetric groups, as we will see.

It turns out that the category $\cH'$ is not quite large enough to yield a categorification of $\fh$.  This is essentially for the following reason.  The upward pointing strands in our diagrams will correspond to the induction $\Ind_{A_n}^{A_{n+1}}$, where $A_n = \C[S_n]$ is the group algebra of the symmetric group.  It follows that a sequence of $m$ upward pointing strands will correspond to the induction $\Ind_{A_n}^{A_{n+m}}$.  One can check (Exercise~\ref{prob:bimodule-induction-correspondence}) that
\[
  \Ind_{A_n}^{A_{n+m}} = (\Ind_{A_m})|_{A_n\md}.
\]
However, our weak categorification of the (faithful) Fock space representation involved the functors $\Ind_M$ for \emph{all} $A_m$-modules $M$.  These functors (more precisely, the corresponding diagrams) are missing from our graphical category.  In attempting to remedy this situation, the key observation is that any simple $A_m$-module $M$ is a direct summand of $A_m$.  Thus, we would like to add missing direct summands to our graphical category $\cH$.

\begin{defin}[Idempotent completion] \label{def:idem-completion}
  Let $\mathcal{C}$ be a category.  The \define{idempotent completion} (or \define{Karoubi envelope}) of $\mathcal{C}$ is the category whose objects are pairs $(X,e)$ where $X$ is an object of $\mathcal{C}$ and $e \in \Mor_\mathcal{C}(X,X)$ is an idempotent endomorphism of $X$ (i.e.\ $e^2=e$).  Morphisms $(X,e) \to (X',e')$ are morphisms $f \colon X \to X'$ in $\mathcal{C}$ such that the diagram
  \[
    \xymatrix{
      X \ar[r]^f \ar[dr]^f \ar[d]_e & X' \ar[d]^{e'} \\
      X \ar[r]_f & X'
    }
  \]
  commutes.  Composition in the idempotent completion is as in $\mathcal{C}$, except that the identity morphism of $(X,e)$ is $e$.
\end{defin}

Intuitively, one should think of the idempotent completion as follows.  If an object $X$ in a category decomposes as a direct sum $X \cong Y \oplus Z$, then the composition
\[
  X \twoheadrightarrow Y \hookrightarrow X,
\]
where the first map is projection onto the summand $Y$ and the second is the inclusion, is an idempotent endomorphism of $X$.  If a category has an idempotent morphism $e$ of an object $X$, we would like this to always correspond to a map as above.  If it does not, we say the category is not \define{idempotent complete}.  In some sense, such a category is ``missing'' a subobject of $X$.  The idempotent completion is a way of formally adding in all such summands -- the object $(X,e$) in the idempotent completion plays the role of the missing summand.  If a category is already idempotent complete (i.e.\ every idempotent morphism corresponds to a projection onto a summand as above), then this category is isomorphic to its idempotent completion.

We are now ready to define the monoidal category that will yield our (conjectural) categorification of the Heisenberg algebra.

\begin{defin}[The Heisenberg category]
  We define the \define{Heisenberg category} $\cH$\index{H@$\cH$} to be the idempotent completion of $\cH'$.
\end{defin}

\Exercises

\medskip

\begin{prob} \label{prob:bimodule-induction-correspondence}
  We use the notation of Section~\ref{ch:heisenberg}.  Show that, for all $m,n \in \N$, we have an isomorphism of functors $\Ind_{A_n}^{A_{m+n}} \cong \Ind_{A_m}|_{A_n\md}$.
\end{prob}

%
\section{Strong categorification of the Heisenberg algebra}
%

It follows from the local relations \eqref{eq:local-relation-basic-symmetric} and \eqref{eq:local-relation-braid} that upward oriented crossings satisfy the relations of $A_n$ and so we have a canonical homomorphism
\begin{equation} \label{eq:An-to-Q+}
  A_n \to \End_{\cH'} (Q_{\up^n}).
\end{equation}
Similarly, since each space of morphisms in $\cH'$ consists of diagrams up to isotopy, downward oriented crossings also satisfy the Hecke algebra relations and give us a canonical homomorphism
\begin{equation} \label{eq:An-to-Q-}
  A_n \to \End_{\cH'} (Q_{\down^n}).
\end{equation}
Introduce the complete symmetrizer and antisymmetrizer
\[
  e(n) = \frac{1}{n!} \sum_{\sigma \in S_n} \sigma,\quad e'(n) = \frac{1}{n!} \sum_{\sigma \in S_n} (-1)^{\ell(\sigma)} \sigma,
\]
where $\ell(\sigma)$ is the length of the permutation $\sigma$.  Both $e(n)$ and $e'(n)$ are idempotents in $A_n$.  We will use the notation $e(n)$ and $e'(n)$ to also denote the image of these idempotents in $\End_{\cH'} (Q_{\up^n})$ and $\End_{\cH'} (Q_{\down^n})$ under the canonical homomorphisms~\eqref{eq:An-to-Q+} and~\eqref{eq:An-to-Q-}.  We then define the following objects in $\cH$:
\[
  S_\down^n = (Q_{\down^n}, e(n)),\quad \Lambda_\up^n = (Q_{\up^n}, e'(n)).
\]

\begin{theorem}[{\cite[Th.~1]{Kho10}}] \label{thm:heisenberg-cat}
  In the category $\cH$, we have
  \begin{gather*}
    S_\down^n \otimes S_\down^m \cong S_\down^m \otimes S_\down^n, \quad
    \Lambda_\up^n \otimes \Lambda_\up^m \cong \Lambda_\up^m \otimes \Lambda_\up^n, \\
    S_\down^n \otimes \Lambda_\up^m \cong \left( \Lambda_\up^m \otimes S_\down^n \right) \oplus \left( \Lambda_\up^{m-1} \otimes S_\down^{n-1} \right).
  \end{gather*}
  We thus have a well-defined $\Z$-algebra homomorphism $\varphi \colon \fh \to \mathcal{K}^\spl_0(\cH)$ given by
  \[
    \varphi(h_n^*) = [S_\down^n],\quad \varphi(e_n) = [\Lambda_\up^n],\quad \fa n \in \N_+.
  \]
  This homomorphism is injective.
\end{theorem}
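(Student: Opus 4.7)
The proof has three stages: establish the three isomorphisms of objects in $\cH$, observe that well-definedness of $\varphi$ then follows from the integral presentation of $\fh$, and deduce injectivity from the Fock space categorification of Chapter~\ref{ch:heisenberg}. The first two isomorphisms are the easier ones. Since the homomorphisms~\eqref{eq:An-to-Q+} and~\eqref{eq:An-to-Q-} embed $\C[S_n]$ into the endomorphism rings of $Q_{\up^n}$ and $Q_{\down^n}$, the $(m,n)$-shuffle element of $S_{m+n}$ (the minimal length coset representative of $S_{m+n}/(S_m \times S_n)$ that swaps the two blocks) gives a morphism in $\Mor_{\cH'}$ intertwining $e'(n) \otimes e'(m)$ with $e'(m) \otimes e'(n)$; since the shuffle is invertible in $S_{m+n}$, its image descends to an isomorphism in the idempotent completion $\cH$. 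This yields $\Lambda_\up^n \otimes \Lambda_\up^m \cong \Lambda_\up^m \otimes \Lambda_\up^n$, and the argument for the $S_\down$'s is entirely parallel with antisymmetrizers replaced by symmetrizers.

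For the mixed isomorphism, the key input is relation~\eqref{eq:local-relation-up-down-double-crossing}, which in the case $m=n=1$ already decomposes $Q_\down \otimes Q_\up \cong (Q_\up \otimes Q_\down) \oplus \mathbf{1}$, the cap and cup on the right-hand side providing the projection and inclusion for the trivial summand. In general, the plan is to move the block of $n$ downward strands through the block of $m$ upward strands one crossing at a time, each application of~\eqref{eq:local-relation-up-down-double-crossing} producing an ``uncrossed'' term plus a cap-cup correction. Sandwiching with $e(n)$ on the bottom and $e'(m)$ on the top, one observes that $e'(m)$ annihilates any diagram in which two distinct up-strands are linked by a cap-cup (because such a pair is symmetric with respect to transposition whereas $e'(m)$ enforces antisymmetry), and similarly $e(n)$ annihilates parallel down-strand pairings with the wrong symmetry; the bookkeeping collapses all surviving contributions to exactly one copy of $\Lambda_\up^{m-1} \otimes S_\down^{n-1}$. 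I expect this diagrammatic combinatorics to be the main technical obstacle, and would follow the calculations of~\cite{Kho10}.

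Given the three isomorphisms, well-definedness of $\varphi$ is immediate from Corollary~\ref{cor:Heis-alg-integral-presentation}, which presents $\fh$ by precisely the relations they categorify. For injectivity, the plan is to construct a strict monoidal functor from $\cH$ into the category of endofunctors of $\cA = \bigoplus_{n \ge 0} A_n\md$ sending $Q_\up \mapsto \Ind$ and $Q_\down \mapsto \Res$, with the diagrammatic generators mapping to simple transpositions (for crossings) and to the unit and counit of the $(\Ind,\Res)$-adjunction (for cups and caps). One must verify that the local relations~\eqref{eq:local-relation-basic-symmetric}--\eqref{eq:cc-circle-and-left-curl} hold for these natural transformations; this is carried out in detail in~\cite{Kho10}. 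Under the induced ring homomorphism $\cK_0^\spl(\cH) \to \End_\Z(\cG_A)$, the classes $[\Lambda_\up^n]$ and $[S_\down^n]$ are sent to $[\Ind_{E_n}]$ and $[\Res_{L_n}]$, since $e'(n)$ and $e(n)$ project onto the sign and trivial summands $E_n$ and $L_n$ of the regular $A_n$-module, respectively. By Proposition~\ref{prop:Fock-weak-cat}, these operators realize $e_n$ and $h_n^*$ on $\cG_A \cong \cF$, so $\varphi$ composed with this representation recovers the Fock space action of $\fh$. Faithfulness of $\cF$ from Proposition~\ref{prop:Stone-von-Neumann} then forces $\varphi$ itself to be injective.
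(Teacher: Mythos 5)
Your proposal follows essentially the same route as the paper: the paper likewise defers the three isomorphisms to Khovanov's Theorem~1, obtains well-definedness of $\varphi$ from the integral presentation of Corollary~\ref{cor:Heis-alg-integral-presentation}, and establishes injectivity exactly as you do, by passing through the functor $F \colon \cH \to \Fun(\cA,\cA)$ built from the induction/restriction bimodules, identifying $F(\Lambda^n_\up)$ and $F(S^n_\down)$ with $\Ind_{E_n}$ and $\Res_{L_n}$, and invoking Propositions~\ref{prop:Fock-weak-cat} and~\ref{prop:Stone-von-Neumann} (faithfulness of Fock space). One minor imprecision in your sketch of the mixed relation: a cap--cup pair joins an up-strand to a down-strand, so the cancellation of the higher correction terms comes from the combined sign $(+1)\cdot(-1)$ picked up when $e(n)$ and $e'(m)$ absorb the simultaneous transposition of a pair of down-strands and of the up-strands they are capped to, rather than from $e'(m)$ acting alone on ``two up-strands linked by a cap--cup''; since you defer this bookkeeping to \cite{Kho10}, as does the paper, this does not affect the overall argument.
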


\begin{rem}
  Note that the map $\varphi$ in Theorem~\ref{thm:heisenberg-cat} is a homomorphism of $\Z$-algebras (as opposed to a functor as in Definition~\ref{def:strong-cat}) since we are viewing one-object additive categories as $\Z$-algebras (see Example~\ref{eg:alg-as-cat}).  It is conjectured in~\cite[Conj.~1]{Kho10} that the map $\varphi$ is, in fact, an isomorphism.  If this is true, then $(\cH,\varphi^{-1})$ is a strong categorification of the Heisenberg algebra.  The difficulty in proving this conjecture lies in our passage from the graphical category $\cH'$ to its idempotent completion $\cH$.  It is difficult to prove that one has found all of the objects in this idempotent completion (equivalently, all of the idempotents in the morphism spaces).  In other, somewhat similar, categorifications, one has a grading on the morphism spaces that allows one to find all of the idempotents (since idempotents must lie in degree zero).  We refer the reader to the expository article~\cite[\S4]{LS12} for further details.
\end{rem}

Injectivity in Theorem~\ref{thm:heisenberg-cat} is proved by relating the graphical category $\cH$ to the weak categorification of Fock space (Proposition~\ref{prop:Fock-weak-cat}).  For $n,m \in \N$, let $_m\bimod_n$ be the category of finite-dimensional $(A_m,A_n)$-bimodules.  One then defines a functor
\[
  \cH \to \bigoplus_{m,n \in \N} {_m\bimod_n}.
\]
Intuitively, this functor takes upwards pointing arrows to the bimodules defining induction and downwards pointing arrows to the bimodules defining restriction.

Now, we know that an $(A_m,A_n)$-bimodule defines a functor $A_m\md \to A_n\md$ by tensoring on the left (see Section~\ref{sec:2cat}).  Thus, if $\Fun(\cC,\cD)$ denotes the category of exact functors from an abelian category $\cC$ to an abelian category $\cD$, we have a functor
\[
  {_m\bimod_n} \to \Fun(A_n\md,A_m\md).
\]
We then define the composition of functors
\[
  F \colon \cH \to \bigoplus_{m,n \in \N} {_m\bimod_n} \to \bigoplus_{m,n \in \N} \Fun(A_n\md,A_m\md) \to \Fun \left( \bigoplus_{n \in \N} A_n\md, \bigoplus_{n \in \N} A_n\md \right).
\]
Thus, $F$ is a representation of $\cH$ on the category $\cA = \bigoplus_{n \in \N} A_n\md$\index{A@$\cA$}.

We then have a commutative diagram
\[
  \xymatrix{
    \cH \ar[r]^(0.35){F} \ar[d]_{\cK_0} & \Fun (\cA, \cA) \ar[d]^{\cK_0} \\
    \cK_0(\cH) \ar[r]^{[F]} & \End_\Z \cF \\
    \fh \ar@{^{(}->}[u]^{\varphi} \ar[ur]
  }
\]
The functors $\Ind_{E_n}$ and $\Res_{L_n}$ that appeared in the weak categorification of Fock space (Proposition~\ref{prop:Fock-weak-cat}) are the images under $F$ of the objects $\Lambda^n_\up$ and $S^n_\down$, respectively.

%
\section{Further directions}
%

While we have seen the definition of (naive, weak, and strong) categorification in these notes, we have only touched on a small subset of the important examples of categorification.  In this final section, we mention a few of the many other examples.

As mentioned in Example~\ref{eg:modified-env-alg}, one can define a modified quantized enveloping algebra that is an algebra with a collection of idempotents.  These idempotents are indexed by the elements of the weight lattice.  For each weight, Lusztig defined a \define{quiver variety} and considered a certain category of perverse sheaves on these varieties.  He then defined a convolution product on these sheaves.  Passage to the Grothendiecck group then recovers the modified quantized enveloping algebra.  The so-called \define{canonical basis} appears naturally from this construction as the classes of simple objects.

Khovanov--Lauda and Rouquier have given categorifications of Kac--Moody algebras (and their quantum analogues).  The definition of these \define{2-Kac-Moody algebras} (or \define{2-quantized enveloping algebras}) by Khovanov--Lauda is diagrammatic, similar to the description of the graphcial Heisenberg category $\cH$ given in Section~\ref{sec:Heis-cat}, while the description by Rouquier is more algebraic.  One can then consider 2-representations of quantum groups, which are 2-functors into other 2-categories (e.g.\ 2-categories of bimodules).

There are important applications of categorification to knot and surface invariants.  The category $k$-Cob of $k$-cobordisms is the category whose objects are oriented $k$-manifolds, and whose morphisms are $(k+1)$-manifolds with boundaries corresponding to their domain and codomain (i.e.\ cobordisms).  Then a $(k+1)$-dimensional topological quantum field theory (TQFT) is a monoidal functor from $k$-Cob to the category of $R$-modules for some ring $R$.  The Reshetikhin--Turaev invariant is a $(0+1)$-dimensional TQFT.  For a fixed simple Lie algebra $\mathfrak{g}$ and a representation $V$ of $\mathfrak{g}$, it is a functor
\[
  0\text{-Cob} \to U_q(\mathfrak{g})\md,
\]
where $U_q(\mathfrak{g})$ is the quantized enveloping algebra associated to $\mathfrak{g}$.  This functor sends the empty 0-manifold to the trivial $U_q(\mathfrak{g})$-module $\Z[q,q^{-1}]$.  Thus, it sends a knot (which is a cobordism from the trivial, i.e.\ empty, 0-manifold to itself) to an endomorphism of the trivial module $\Z[q,q^{-1}]$.  Such an endomorphism is simply multiplication by an element $p \in \Z[q,q^{-1}]$.  If $\mathfrak{g}=\mathfrak{sl}_2$ and $V$ is the standard two-dimensional module, then $p$ is the Jones polynomial of the knot.

\define{Khovanov homology} is a categorification of the Reshetikhin--Turaev invariant.  It is a functor from the category $0$-Cob to the category of categories.  Thus, it maps a 0-manifold to a category.  The empty 0-manifold gets mapped to the category of complexes of graded vector spaces.  It maps 1-cobordisms (i.e.\ tangles) to functors.  Passing to Grothendieck groups recovers the Reshetikhin--Turaev invariant.

Going even further, one can consider extended TQFTs.  Consider the 2-category whose objects are 0-manifolds, whose 1-morphisms are tangles, and whose 2-morphisms are cobordisms between tangles.  One would then like to construct functors from this 2-category to the 2-category of representations of a 2-quantized enveloping algebra.  Doing so should result in richer knot invariants.  One assigns to each knot a homology theory instead of a polynomial.  Since a surface is a cobordism from the trivial tangle to itself, one should obtain polynomial invariants of surfaces.  This is an active area of research.


\printindex

\printbibliography

\end{document}